\documentclass[english,12pt]{article}
\usepackage[T1]{fontenc}
\usepackage{geometry}
\geometry{verbose,tmargin=2cm,bmargin=2cm,lmargin=2cm,rmargin=2cm}
\usepackage{url}
\usepackage{soul}
\usepackage{enumitem}
\usepackage{amsmath}
\usepackage{amsthm}
\usepackage{amssymb}

\usepackage[usenames,dvipsnames]{color}

\makeatletter
 \theoremstyle{definition}
 \newtheorem*{defn*}{\protect\definitionname}
 \theoremstyle{plain}
 \newtheorem{thm}{\protect\theoremname}[section]
  \theoremstyle{remark}
  \newtheorem*{rem*}{\protect\remarkname}
  \theoremstyle{plain}
  \newtheorem{lem}[thm]{\protect\lemmaname}
  \theoremstyle{plain}
  \newtheorem{claim}[thm]{\protect\claimname}
 \newlist{casenv}{enumerate}{4}
 \setlist[casenv]{leftmargin=*,align=left,widest={iiii}}
 \setlist[casenv,1]{label={{\itshape\ \casename} \arabic*.},ref=\arabic*}
 \setlist[casenv,2]{label={{\itshape\ \casename} \roman*.},ref=\roman*}
 \setlist[casenv,3]{label={{\itshape\ \casename\ \alph*.}},ref=\alph*}
 \setlist[casenv,4]{label={{\itshape\ \casename} \arabic*.},ref=\arabic*}
  \theoremstyle{plain}
  \newtheorem{cor}[thm]{\protect\corollaryname}
  \theoremstyle{plain}
  \newtheorem{prop}[thm]{\protect\propositionname}
  \theoremstyle{definition}
  \newtheorem{obs}[thm]{\protect\observationname}

\makeatother

\usepackage{babel}
  \providecommand{\claimname}{Claim}
  \providecommand{\corollaryname}{Corollary}
  \providecommand{\definitionname}{Definition}
  \providecommand{\lemmaname}{Lemma}
  \providecommand{\propositionname}{Proposition}
  \providecommand{\observationname}{Observation}
  \providecommand{\remarkname}{Remark}
 \providecommand{\casename}{Case}
\providecommand{\theoremname}{Theorem}

\begin{document}
\global\long\def\prob{\mathrm{Pr}}
\global\long\def\E{\mathbb{E}}
\global\long\def\var{\mathrm{Var}}
\global\long\def\Poisson{\mathrm{Poisson}}
\global\long\def\Bin{\mathrm{Bin}}
\global\long\def\mm{\hat{m}}
\global\long\def\nn{\hat{n}}
\global\long\def\hd{\hat{d}}
\global\long\def\dd{\vec{d}}
\global\long\def\ddn{\vec{d}_{(\nn)}}
\global\long\def\ddt{\vec{d}_{(t)}}
\global\long\def\hh{\vec{h}}
\global\long\def\eps{\varepsilon}
\global\long\def\xs{\mathrm{exc}}
\global\long\def\type{\mathrm{type}}
\global\long\def\dns{\mathrm{dense}}
\global\long\def\sprs{\mathrm{sparse}}
\global\long\def\cI{\mathcal{\mathcal{I}}}
\global\long\def\gnp{\mathcal{G}\left(n,p\right)}
\global\long\def\gnm{\mathcal{G}\left(n,m\right)}
\global\long\def\gnc{\mathcal{G}\left(n,c/n\right)}
\global\long\def\ggnp{G\sim\gnp}
\global\long\def\ggnc{G\sim\gnc}
\global\long\def\K{\mathcal{K}}
\global\long\def\D{\mathcal{D}}
\global\long\def\P{\mathcal{P}}

\title{Component Games on Random Graphs}

\author{%
Rani Hod\thanks{School of Computer Science, Raymond and Beverly Sackler Faculty of Exact Sciences, and Iby and Aladar Fleischman Faculty of Engineering, Tel Aviv University, Tel Aviv 6997801, Israel.
Email: \protect\url{ranihod@tau.ac.il}. Research supported by Len Blavatnik and the Blavatnik Family foundation.}
\and
Michael Krivelevich\thanks{School of Mathematical Sciences, Raymond and Beverly Sackler Faculty of Exact Sciences, Tel Aviv University, Tel Aviv 6997801, Israel.
Email: \protect\url{krivelev@tau.ac.il}.
Research supported in part by USA-Israel BSF grants~2014361 and~2018267, and by ISF grant~1261/17.}
\and
Tobias M\"{u}ller\thanks{Bernoulli Institute, Groningen University, PO Box 407, 9700 AK Groningen, The Netherlands.
Email: \protect\url{tobias.muller@rug.nl}.
Research partially supported by NWO grants 639.031.829, 639.032.529 and 612.001.409.}
\and
Alon Naor\thanks{School of Mathematical Sciences, Raymond and Beverly Sackler Faculty of Exact Sciences, Tel Aviv University, Tel Aviv 6997801, Israel.
Email: \protect\url{alonnaor@tau.ac.il}}
\and
Nicholas Wormald\thanks{School of Mathematics, Monash University VIC 3800, Australia.
Email: \protect\url{nicholas.wormald@monash.edu}.
Research supported by the Australian Laureate Fellowships grant FL120100125.}}
\maketitle
\begin{abstract}
In the $\left(1:b\right)$ component game played on a graph $G$,
two players, \textsc{Maker} and \textsc{Breaker}, alternately claim~$1$
and~$b$ previously unclaimed edges of $G$, respectively. \textsc{Maker}'s
aim is to maximise the size of a largest connected component in
her graph, while \textsc{Breaker} is trying to minimise it. We show
that the outcome of the game on the binomial
random graph is strongly correlated with the appearance of a nonempty
$(b+2)$-core in the graph.

For any integer $k$, the $k$-core of a graph is its largest subgraph
of minimum degree at least $k$. Pittel, Spencer and Wormald showed
in 1996 that for any $k\ge3$ there exists an explicitly defined constant $c_{k}$ such
that $p=c_{k}/n$ is the threshold function for the appearance of
the $k$-core in $\ggnp$. More precisely, $\ggnc$ has WHP a linear-size
$k$-core when the constant $c>c_{k}$, and an empty $k$-core when $c<c_{k}$.

We show that for any positive constant $b$, when playing the $(1:b)$
component game on $\ggnc$, \textsc{Maker} can WHP build a linear-size
component if $c>c_{b+2}$, while \textsc{Breaker} can WHP prevent
\textsc{Maker} from building larger than polylogarithmic-size components
if $c<c_{b+2}$.

For \textsc{Breaker}'s strategy, we prove a theorem which may be of
independent interest. The standard algorithm for computing the $k$-core
of any graph is to repeatedly delete (``peel'') all vertices of degree
less than $k$, as long as such vertices remain. When $\ggnc$ for
$c<c_{k}$, it was shown by Jiang, Mitzenmacher and Thaler that $\log_{k-1}\log n+\Theta(1)$
peeling iterations are WHP necessary and sufficient to obtain the
(empty) $k$-core of~$G$. Our theorem states that already after
a constant number of iterations, $G$ is WHP shattered into pieces
of polylogarithmic size.
\end{abstract}

\section{\label{sec:intro}Introduction}

Let $X$ be a finite set, let $\mathcal{F}\subseteq2^{X}$ be a family
of subsets of $X$, and let $b$ be a positive integer. In the $\left(1:b\right)$
\textsc{Maker}\textendash \textsc{Breaker} game $\left(X,\mathcal{F}\right)$,
two players, called \textsc{Maker} and \textsc{Breaker}, take turns
in claiming previously unclaimed elements of $X$. On \textsc{Maker}'s
move, she claims one element of $X$, and on \textsc{Breaker}'s move,
he claims~$b$ elements (if less than $b$ elements remain before
\textsc{Breaker's} last move, he claims all of them). The game ends
when all of the elements have been claimed by either of the players.
\textsc{Maker} wins the game $\left(X,\mathcal{F}\right)$ if by the end of
the game she has claimed all the elements of some $F\in\mathcal{F}$;
otherwise \textsc{Breaker} wins. The description of the game is completed by stating which of the players
is the first to move, though usually it makes no real difference. For convenience, we typically assume
that $\mathcal{F}$ is closed upwards, and specify only the inclusion-minimal
elements of $\mathcal{F}$.  Since these are finite, perfect information
games with no possibility of draw, for each setup of $\mathcal{F},b$
and the choice of the identity of the first player, one of the players has a strategy
to win regardless of the other player's strategy. Therefore, for a
given game we may say that the game is \textsc{Maker}\textquoteright s
win, or alternatively that it is \textsc{Breaker}\textquoteright s
win. The set $X$ is referred to as the \emph{board} of the game,
and the elements of $\mathcal{F}$ are referred to as the \emph{winning
sets}.

When $b=1$, we say that the game is \emph{unbiased}; otherwise it
is \emph{biased}, and $b$ is called the \emph{bias} of \textsc{Breaker}.
It is easy to see that \textsc{Maker}\textendash \textsc{Breaker}
games are \emph{bias monotone}. That is, if \textsc{Maker} wins some
game with bias $(1:b)$, she also wins this game with bias $(1:b')$
for every $b'\leq b$. Similarly, if \textsc{Breaker} wins a game
with bias $(1:b)$, he also wins this game with bias $(1:b')$ for
every $b'\geq b$. This bias monotonicity allows us to define the
\emph{threshold bias}: for a given game $\mathcal{F}$, the threshold
bias $b^{*}$ is the value for which \textsc{Breaker} wins the game
$\mathcal{F}$ with bias $(1:b)$ if and only if $b>b^{*}$. It is
quite easy to observe that it is never a disadvantage in a \textsc{Maker}\textendash \textsc{Breaker}
game to be the first player, and that if a player has a winning strategy
as the second player, essentially the same strategy can be used to
also win the game as the first player. Hence, when we describe a strategy
for \textsc{Maker} we assume that she is the second player, implying
that under the conditions described she can win as either a first
or a second player. The same goes for \textsc{Breaker}'s strategy.

\medskip{}

In this paper, our attention is dedicated to the $\left(1:b\right)$
\textsc{Maker}\textendash \textsc{Breaker} $s$-component game on
the binomial random graph $\mathcal{G}\left(n,p\right)$,
in which each of the ${n \choose 2}$ possible edges appears independently
with probability $p = p(n)$; that is, the board is the edge set of $\ggnp$
and the (inclusion-minimal) winning sets are the trees of $G$ with $s$
vertices. Since the board is random, our results hold with high probability
(WHP), i.e., with probability tending to $1$ as $n$ tends to infinity.

\medskip{}

For more on \textsc{Maker}\textendash \textsc{Breaker} games as well as other positional games, please see the books by Beck~\cite{B-08} and by Hefetz et al.~\cite{HKSS-14}.

\subsection{Previous results}

A natural case to consider is $s=n$; that is, the winning sets are
the spanning trees of the graph the game is played on. This $\left(1:b\right)$
$n$-component game is known as the \emph{connectivity} game.

The unbiased game was completely solved by Lehman~\cite{Lehman-64},
who showed that \textsc{Maker}, as a second player, wins the $\left(1:1\right)$
connectivity game on a graph $G$ if and only if $G$ contains two
edge-disjoint spanning trees. It follows easily from~\cite{Nash-61,Tutte-61}
that if $G$ is $2k$-edge-connected then it contains $k$ pairwise
independent spanning trees; thus, \textsc{Maker} wins the $\left(1:1\right)$
connectivity game on $4$-regular $4$-edge-connected graphs, whereas
\textsc{Breaker} trivially wins the $\left(1:1\right)$ connectivity
game on graphs with less than $2n-2$ edges, i.e., average degree under 4.
For denser graphs, since \textsc{Maker} wins the unbiased game by
such a large margin, it only seems fair to even out the odds by strengthening
\textsc{Breaker}, giving him a bias $b\ge2$. The first and most natural
board to consider is the edge set of the complete graph $K_{n}$.
Chv\'{a}tal and Erd\H{o}s~\cite{CH-78} showed that $\left(\frac{1}{4}-o\left(1\right)\right)n/\log n\le b^{*}\left(K_{n}\right)\le\left(1+o\left(1\right)\right)n/\log n$;
the upper bound was proved to be tight by Gebauer and Szab\'{o}~\cite{GS-09};
that is, \textsc{$b^{*}\left(K_{n}\right)=\left(1+o\left(1\right)\right)n/\log n$.}

Returning to $\mathcal{G}\left(n,p\right)$, Stojakovi\'{c} and Szab\'{o}~\cite{SS-05}
showed that WHP $b^{*}\left(\mathcal{G}\left(n,p\right)\right)=\Theta\left(np/\log n\right)$,
where \textsc{Breaker}'s win holds for any $0\le p\le1$, while \textsc{Maker}'s
win requires large enough $p$ (\textsc{Maker} cannot win for small
$p$ since $\ggnp$ is WHP disconnected). This was improved by Ferber
et al.~in~\cite{FGKN-15}, who showed for $p=\omega\left(\log n/n\right)$
that WHP $b^{*}\left(\mathcal{G}\left(n,p\right)\right)=\left(1+o\left(1\right)\right)np/\log n$.

A different random graph model, the random $d$-regular graph $\mathcal{G}\left(n,d\right)$
on $n$ vertices, was considered by Hefetz et al.~in~\cite{HKSS-11}.
They showed that WHP $b^{*}\left(\mathcal{G}\left(n,d\right)\right)\ge\left(1-\epsilon\right)d/\log_{2}n$
for $d=o\left(\sqrt{n}\right)$. Note that when $d=\Omega\left(\sqrt{n}\right)$,
the model $\mathcal{G}\left(n,d\right)$ is quite close to $\mathcal{G}\left(n,p\right)$
for $p=d/n$, since for this value of $p$ all degrees in $\ggnp$ are WHP $(1 + o(1))d$. Moreover,
they showed that $b^{*}\left(G\right)\le\max\left\{ 2,\bar{d}/\log n\right\} $
for any graph $G$ of average degree $\bar{d}$, so the result is
asymptotically tight.

\medskip{}
\textsc{Breaker}'s strategy in practically all results mentioned above
is to deny connectivity by isolating a single vertex. Much less is
known, however, for the case $s<n$. It seems that even if \textsc{Breaker}
is able to isolate a vertex in a constant number of moves, it does
little to prevent \textsc{Maker} from winning the $s$-component game
for $s=\Omega\left(n\right)$.

Instead of considering the threshold bias $b^{*}$, we shift the focus
to the maximal component size $s$ achievable by \textsc{Maker} in
the $\left(1:b\right)$ component game, for a given bias $b$ (assuming optimal play of both players). Let
us denote this quantity by $s_{b}^{*}\left(G\right)$. Bednarska and
\L uczak considered in~\cite{BL-01} the $\left(1:b\right)$ component
game on the complete graph. They showed that $s_{b}^{*}\left(K_{n}\right)$
undergoes a certain type of  phase transition around $b=n$; specifically, that $s_{n+t}^{*}\left(K_{n}\right)=\left(1-o\left(1\right)\right)n/t$
for $\sqrt{n}\ll t\ll n$ but $s_{n-t}^{*}\left(K_{n}\right)=t+O\left(\sqrt{n}\right)$
for $0\le t\le n/100$.

The component game on $d$-regular graphs for fixed $d\ge3$ was considered
by Hod and Naor in~\cite{HN-14}. They showed
a similar phase transition of $s_{b}^{*}$ around $b=d-2$: for any
$d$-regular graph $G$ on $n$ vertices,
\[
s_{b}^{*}\left(G\right)=\begin{cases}
O\left(1\right), & b\ge d-1;\\
O\left(\log n\right) & b=d-2
\end{cases}
\]
whereas $s_{d-3}^{*}\left(\mathcal{G}\left(n,d\right)\right)=\Omega\left(n\right)$
WHP.

\subsection{\label{sec:our-results}Our results}

Given previous results, it is not surprising that the component game
on the binomial random graph $\mathcal{G}\left(n,p\right)$
undergoes a phase transition too. Writing $p=c/n$ \textemdash{} so
the expected average degree in $\mathcal{G}\left(n,p\right)$ is $c$
\textemdash{} we could perhaps guess the phase transition occurs around
$c=b+2$ in accord with the results for $d$-regular graphs. Another
plausible approach would be to consider the so-called \emph{random
graph intuition}, as first observed by Chv\'{a}tal and Erd\H{o}s in~\cite{CH-78}:
it turns out that in many cases, the winner of a game in which both
sides play to their best is the same as if both sides were to play
randomly. We may thus guess that the transition occurs around $c=b+1$,
since in the random players scenario \textsc{Maker} would end up with
the edge set of $\mathcal{G}\left(n,p/\left(b+1\right)\right)$, which
contains a linear-size component if and only if $c>b+1$.

It turns out, however, that neither of these heuristics gives the correct answer. The key graph parameter is degeneracy, which is related to the minimum degree of subgraphs,
rather than average degree; consequently, the critical $c$ is (somewhat) larger
than $b+2$.
\begin{defn*}
For an integer $k\ge1$, the $k$\emph{-core} of a graph $G=\left(V,E\right)$
is its largest subgraph $K$ of minimum degree $\delta\left(K\right)\ge k$.
If no such subgraph exists, we say $G$ has an empty $k$-core, or
that $G$ has no $k$-core, or that $G$ is $\left(k-1\right)$-\emph{degenerate}.
\end{defn*}
Pittel, Spencer and Wormald~\cite{PSW-96} (see also~\cite{CW-06,JL-07,Rio-08})
proved that for every $k\ge3$ there exists a threshold constant $c_{k}$
for the appearance of the $k$-core in the binomial
random graph. That is, $\ggnc$ has WHP an empty $k$-core for $c<c_{k}$
and a linear $k$-core for $c>c_{k}$. It was previously shown by
\L uczak~\cite{Luczak-91,Luczak-92} that $\ggnp$ WHP has either
an empty or a linear-size $k$-core, for every fixed $k\ge3$. The constant $c_k$ is implicitly defined and satisfies $c_{k}=k+\sqrt{k\log k}+O \big(\sqrt{k/\log k} \big)$ (see~\cite[Lemma 1]{PVW-11}). For small values of $k$ we have
$c_{3}\approx3.351,$ $c_{4}\approx5.149$, $c_{5}\approx6.799$, $c_{6}\approx8.365$.

\medskip{}
The following theorems show that for any constant $b$, the phase transition for the $(1:b)$ component
game on $\ggnc$ occurs at $c=c_{b+2}$.
\begin{thm}
\label{thm:breaker-win}For any two constants $b$ and $c<c_{b+2}$ we have WHP $s_{b}^{*}\left(\gnc\right)=o\left(\log^{3}n\right)$.
\end{thm}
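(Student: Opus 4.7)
The plan is to combine the authors' shattering theorem (described in the introduction) with an explicit pairing strategy based on the $(b+2)$-core peeling. Since $c<c_{b+2}$, the $(b+2)$-core of $G\sim\gnc$ is WHP empty, so the parallel peeling procedure---iteratively remove all vertices of degree at most $b+1$---eventually empties the graph. Let $U_t$ denote the set of vertices removed at iteration $t$, and set $V_t=V\setminus(U_1\cup\cdots\cup U_t)$. The first step is to invoke the shattering theorem to fix a constant $r=r(c,b)$ for which WHP every connected component of $G[V_r]$ has polylogarithmic size; the Breaker strategy is then designed to confine every Maker component to a bounded neighborhood of a single component of $G[V_r]$.

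The strategy exploits the fact that, by definition of the peeling, each $v\in V\setminus V_r$ has at most $b+1$ neighbors in $V_{t(v)-1}\setminus\{v\}$, where $t(v)$ is the iteration at which $v$ was peeled. Fix any linear order on $V$ extending the peeling: $U_1$ before $U_2$ before $\cdots$ before $U_r$ before $V_r$, with ties broken arbitrarily. For each $v\in V\setminus V_r$ let $E_v$ be the set of $G$-edges from $v$ to vertices later in this order; then $|E_v|\le b+1$, and the sets $\{E_v\}_{v\in V\setminus V_r}$ partition the edges of $G$ with at least one endpoint in $V\setminus V_r$. Breaker follows the natural box pairing strategy: whenever Maker claims an edge $e\in E_v$, Breaker responds by claiming the (at most $b$) still unclaimed edges of $E_v$. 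Edges lying inside $G[V_r]$ are handled with an arbitrary auxiliary strategy. The outcome is that in Maker's final graph $M$, every $v\in V\setminus V_r$ has at most one Maker edge going to a later vertex.

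Given this restriction, we claim that (i) the subgraph of $M$ formed by edges incident to $V\setminus V_r$ is a forest, and (ii) any Maker component $C$ satisfies that $C\cap V_r$ lies in a single component of $G[V_r]$. Both claims follow from the same \emph{minimum vertex} argument: a cycle for (i), or a path in $C$ connecting two vertices of $V_r$ lying in distinct components of $G[V_r]$ for (ii), would contain a vertex $v\in V\setminus V_r$ minimal in the linear order whose two neighbors in the structure are both later than $v$, yielding two later Maker edges at $v$ and contradicting the strategy. Consequently, any Maker component $C$ decomposes as a subset of $V_r$ lying in a single component $K$ of $G[V_r]$---which is polylogarithmic by the shattering theorem---together with trees in $V\setminus V_r$ attached to $K$.

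The main obstacle will be to bound the total size of the attached forest in $V\setminus V_r$. The box strategy alone does not control the depth of these trees---a long monotonically-ordered chain within a single peeling layer $U_t$ is consistent with the strategy---so a purely combinatorial bound from the strategy is insufficient. Completing the bound requires a further random-graph argument showing that the ``basin of attraction'' of each $V_r$-component in the peeling is polylogarithmic WHP, i.e., that only a polylogarithmic number of peeled vertices can be joined to any one $V_r$-component by a monotonically-ordered path in $G$. Combined with the polylogarithmic bound on $|K|$ and the maximum-degree bound $\Delta(\gnc)=O(\log n/\log\log n)$ WHP to control branching, this will yield $|C|=o(\log^3 n)$ WHP.
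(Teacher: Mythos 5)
Your pairing strategy is implementable and your structural claims are correct: with boxes $E_v$ of size at most $b+1$ indexed by the peeled vertices, Maker gets at most one edge per box, and the minimum-vertex argument does show that Maker's edges meeting $V\setminus V_r$ form a forest, that each Maker component meets at most one component of $G[V_r]$, and (pushing it slightly further) that every peeled vertex of a Maker component is joined to its ``top'' by an order-increasing path. But the proof stops exactly at the crux, as you yourself note: nothing bounds the number of peeled vertices attached. The lemma you defer to --- that the ``basin'' of each $V_r$-component, i.e.\ the set of peeled vertices admitting a monotone-ordered $G$-path into it, is WHP polylogarithmic --- is not a routine add-on, and it is doubtful as stated. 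First, the tie-breaking inside a layer is arbitrary, so a monotone path inside $U_t$ can be any path of $G[U_t]$ for an adversarial (or merely unlucky) ordering. Second, and more seriously, the basin is the backward-reachable set in the orientation of $E(G)$ from earlier to later endpoints, and while out-degrees of peeled vertices are at most $b+1$, in-degrees are unbounded: the average number of earlier neighbours of a vertex is about $c/2$, and for vertices surviving to late layers it is close to $c$, which exceeds $1$ throughout the interesting range $c<c_{b+2}$ (recall $c_{b+2}>3$). So the backward exploration from a $V_r$-component is not obviously subcritical, and the basin may well be polynomially large; at best this needs a genuinely new random-graph argument, which is the heart of the theorem and is missing.

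The paper avoids the need for any such basin estimate by strengthening the strategy rather than the random-graph input: Breaker responds not only to ``vertical'' edges (between different peeling ranks) but also to ``horizontal'' ones within a rank, maintaining the invariant of Claim~\ref{clm:H-comp-invariants}. This caps every same-rank Maker component at $2(b+1)$ vertices (Corollary~\ref{cor:H-comp-size}) and forces ranks to increase strictly along the contracted tree towards its root (Corollary~\ref{cor:T-gamma}), so a Maker component is contained in a ball of \emph{constant} radius $2(b+1)t^{\dagger}$ around its rank-$\ge t^{\dagger}$ part. That part has size $o(\log^3 n)$ by the shattering theorem (Theorem~\ref{thm:shatter-time}), and the constant-factor expansion bound of Claim~\ref{clm:vertex-expansion} then yields $o(\log^3 n)$ for the whole component. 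In other words, precisely the long within-layer chains that your box strategy permits are what the paper's strategy is designed to forbid; without an analogue of that depth control (or a proof of your basin lemma), the proposal does not yet prove the theorem.
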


\begin{thm}
\label{thm:maker-win}For any two constants $b$ and $c>c_{b+2}$ we have WHP $s_{b}^{*}\left(\gnc\right)=\Omega\left(n\right)$.
\end{thm}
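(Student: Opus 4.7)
The plan is to exploit the linear-size $(b+2)$-core $K$ of $G\sim\gnc$, which exists WHP when $c>c_{b+2}$ by the Pittel--Spencer--Wormald theorem. The core satisfies $\delta(K)\ge b+2$ and $|V(K)|=\Theta(n)$; the surplus $\delta(K)-b\ge 2$ gives Maker two ``free'' neighbours per vertex beyond Breaker's bias, and this is the main resource driving the argument.

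Maker's strategy, in outline, is to ignore every edge outside $K$ and grow a tree $T\subseteq K$ in her graph, starting from an arbitrary root $v_0\in V(K)$. On each of her turns she picks a vertex $u\in V(T)$ that still has an unclaimed $K$-edge to $V(K)\setminus V(T)$ and claims such an edge, thereby adding one new vertex to $T$. As long as the set of unclaimed $K$-edges from $V(T)$ to $V(K)\setminus V(T)$ is non-empty, $T$ strictly grows, and the goal is to show this set stays non-empty until $|V(T)|=\Omega(n)$.

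The analysis naturally splits into two regimes. In the \emph{local} regime, with $|V(T)|$ below some slowly growing threshold (say polylogarithmic in $n$), a standard first-moment computation on $\gnc$ shows that WHP no set of that size spans a cycle in $K$, so $K[V(T)]$ is a tree; from the identity $2e_K(V(T))+e_K(V(T),V(K)\setminus V(T))=\sum_{v\in V(T)}\deg_K(v)\ge(b+2)|V(T)|$, the number of cross-edges is at least $b|V(T)|+2$, which exceeds Breaker's total of at most $b|V(T)|$ claimed edges, so an unclaimed cross-edge always exists. In the \emph{global} regime one must appeal to an edge-expansion property of the core, to the effect that every $S\subseteq V(K)$ with $|S|\le\alpha n$ satisfies $e_K(S,V(K)\setminus S)\ge\gamma |S|$ for suitable $\alpha,\gamma>0$; if $\gamma>b$ then Breaker's $\le b|V(T)|$ claimed edges cannot cover all cross-edges and Maker extends until $|V(T)|$ reaches $\alpha n$.

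The main obstacle is the global regime: $\delta(K)\ge b+2$ alone does not yield edge-expansion with constant strictly larger than $b$ for sets of nearly linear size. Establishing $\gamma>b$ will most plausibly go through the finer branching-process or configuration-model description of the $(b+2)$-core used in Pittel--Spencer--Wormald and its successors. A cleaner alternative is a two-stage scheme: first, using the local regime, grow many disjoint trees of polylogarithmic size inside $K$; then stitch them together via short paths in $K$, exploiting that WHP any two core-vertices are joined by many internally disjoint short paths, a property that follows from expansion of constant-size sets (which \emph{is} implied by $\delta(K)\ge b+2$ together with the local tree-like geometry). Either route, once properly quantified, delivers $s_{b}^{*}(\gnc)=\Omega(n)$ WHP.
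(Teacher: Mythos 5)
Your high-level setup (restrict attention to the $(b+2)$-core $\K$ and grow a tree inside it) matches the paper, and you correctly identify the crux of the difficulty: minimum degree $b+2$ alone does not force $\ge b|S|$ boundary edges for small sets. However, the specific mechanisms you propose for both regimes have genuine gaps, and the paper's route is quite different.

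First, the \emph{local regime} analysis fails as stated. It is not true that WHP no polylogarithmic-size set spans a cycle in $\K$: already $\gnc$ has $\Theta(1)$ triangles in expectation, and most survive into the core for $c>c_{b+2}$. So short cycles are abundant, and Breaker can steer Maker's tree $T$ into one. The moment $\K[V(T)]$ acquires even a single excess edge, your count drops to $|\partial T|\ge (b+2)t-2t=bt$, which Breaker can exactly exhaust, and the argument collapses. This is precisely the ``shut down in the early stages'' scenario the paper flags for $c_{b+2}<c<c_{b+3}$.

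Second, the \emph{global regime} expansion you want ($e_\K(S,\bar S)\ge\gamma|S|$ with $\gamma>b$ for all $|S|\le\alpha n$) is false: for any set $S$ whose induced subgraph has a cycle and whose vertices all have degree exactly $b+2$ in $\K$, one has $|\partial S|\le b|S|$, and such small sets exist. The paper proves something substantially weaker and carefully tailored (Lemma~\ref{lem:no-bad-two-cores}): any $2$-core $C\subset\K$ with $|\partial C|\le(k-2)|V(C)|$ \emph{and excess at least $\log^4 n$} must have $\Omega(n)$ vertices. The excess hypothesis is essential and cannot be dropped; it is supplied by the game, not by the core's geometry alone. Your ``stitch many disjoint saplings via short paths'' alternative is also not viable as sketched: both the saplings' disjointness and Maker's ability to win short-path sub-games against Breaker's bias would need justification, and it is unclear either holds.

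The paper's actual strategy is a two-phase tree-growth, but not of the form you propose. Maker first locates an $(N,L)$-tree inside $\K$ (Lemma~\ref{lem:good-tree}) --- a balanced tree of height $N=\log^2\log n$ in which every root-to-leaf descent hits a $k$-heavy vertex (degree $\ge b+3$ in $\K$) within every $L$ steps --- and by a delicate potential argument (Lemma~\ref{lem:accumulate-heavy}) she guarantees $\gg\log^4 n$ such heavy vertices end up in her tree. Only then does she grow naively. When Breaker blocks her, the heavy vertices give $\sum_{v\in V(T)}\deg_\K(v)\ge(b+2)|V(T)|+2\log^4 n$, hence $\xs(\K[V(T)])\ge\log^4 n$, and this excess is exactly what activates Lemma~\ref{lem:no-bad-two-cores} to force $|V(T)|=\Omega(n)$. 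The missing idea in your proposal is this excess-generation phase; without it, neither of the structural facts you invoke is actually available.
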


Fix an integer $k\ge3$. The standard algorithm for finding the
$k$-core of a graph $G$ on $n$ vertices is the following, called
the $k$-peeling process: starting from $G_{0}=G$, let $\left(G_{t}\right)_{t\ge0}$
be the sequence of subgraphs of $G$, where $G_{t+1}$ is obtained
from $G_{t}$ by deleting all edges incident with vertices of degree
at most $k-1$. Since $G_{t+1}\subseteq G_{t}$, this (deterministic)
process stabilises after some finite time $T^*\left(G\right)\le n$.
If $G$ is $\left(k-1\right)$-degenerate, the graph $G_{ T^*}$
is empty; otherwise, $G_{ T^*}$ is the $k$-core of $G$, plus,
possibly, some isolated vertices (we only delete edges, so $G_{t}$
has $n$ vertices for all $t\ge0$, although some of them may become
isolated along the process).
\begin{rem*}
Two variations of this process are: $\left(i\right)$ delete vertices
instead of edges; $\left(ii\right)$ process in every step only a
single vertex of degree less than $k$.
\end{rem*}
Jiang, Mitzenmacher, and Thaler used a branching process argument
in~\cite{JMT-16} (see also~\cite{Gao-14}) to bound the typical value of the stabilization
time $T^*\left(n,c\right)$ of the peeling process on $\gnc$
in the subcritical regime $c<c_{k}$:
\begin{thm}[{\cite[Theorems~1 and~2]{JMT-16}}]
\label{thm:stabilization-time}Fix $k\ge3$ and $c<c_{k}$. Then WHP
\[T^*\left(n,c\right)=\log_{k-1}\log n+\Theta\left(1\right).\]
\end{thm}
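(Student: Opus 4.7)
The plan is to couple the peeling process with the analogous process on a Poisson Galton--Watson tree, reducing the theorem to the asymptotics of a one-dimensional recursion. The starting observation is that whether a vertex $v$ survives $t$ peeling rounds is a local property of its $t$-neighborhood: recursively, $v$ survives iteration $t$ iff it has at least $k$ neighbors each of which survives $t-1$ rounds in $G\setminus\{v\}$. Since we are interested in $t=\log_{k-1}\log n+O(1)$, the $t$-ball around a typical vertex of $\ggnc$ is WHP tree-like and couples with the first $t$ generations of a Galton--Watson tree $\mathcal T$ with $\Poisson(c)$ offspring distribution.

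On $\mathcal T$, I would let $q_t$ denote the probability that the subtree rooted at a non-root vertex survives $t$ rounds. Poisson thinning yields $q_0=1$ and
\[
q_{t+1}=\Pr\!\bigl[\Poisson(cq_t)\geq k-1\bigr],
\]
while the root survives $t$ rounds with probability $p_t=\Pr[\Poisson(cq_{t-1})\geq k]$. By the variational characterization of $c_k$ used in~\cite{PSW-96}, for $c<c_k$ the only fixed point of this map in $[0,1]$ is $0$, so $q_t\to 0$. Once $q_t$ is small, $\Pr[\Poisson(cq_t)\geq k-1]=\frac{(cq_t)^{k-1}}{(k-1)!}(1+o(1))$, hence $q_{t+1}\sim A q_t^{k-1}$ with $A=c^{k-1}/(k-1)!$. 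Iterating and taking logarithms gives $\log(1/q_t)=\lambda(k-1)^t(1+o(1))$ for some positive $\lambda=\lambda(c,k)$, so $p_t=\exp(-\Theta((k-1)^t))$. The upper bound then follows from Markov's inequality: $\E[\#\{v:v\text{ survives round }t\}]\sim np_t=o(1)$ once $t\geq\log_{k-1}\log n+C$ for $C$ large, so WHP $T^{*}(n,c)\leq\log_{k-1}\log n+C$.

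For the lower bound I would apply the second moment method at $t=\log_{k-1}\log n-C$, where $np_t\to\infty$. Two vertices with disjoint $t$-neighborhoods have asymptotically independent survival indicators via the same local coupling; since $t$-balls of this depth WHP have size at most polylogarithmic in $n$, all but a $o(1)$ fraction of pairs are in this disjoint regime. A Chebyshev calculation then forces at least one survivor WHP and hence $T^{*}(n,c)\geq\log_{k-1}\log n-C$. The main obstacle is making the coupling quantitative enough to control the second moment: the tree-likeness of $t$-balls and the asymptotic independence across pairs must all hold with total error $o(1)$, not just marginally. This is tractable because the relevant $t$-balls have only $\mathrm{polylog}(n)$ vertices WHP, so the expected number of ``bad'' pairs (those whose $t$-balls intersect or contain cycles) is $\mathrm{polylog}(n)/n=o(1)$, leaving ample slack.
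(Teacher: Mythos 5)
This theorem is not proved in the paper at all; it is quoted verbatim from~\cite{JMT-16} (their Theorems~1 and~2), so there is no in-paper argument to compare against. Your sketch does follow the same branching-process philosophy as the cited source: your $q_t$ is exactly the quantity the paper calls $\beta_t$ in Section~\ref{sec:branching-gnp}, with $\beta_0=1$ and $\beta_{t+1}=\Psi_{\ge k-1}(c\beta_t)$, and your $p_t=\Psi_{\ge k}(c\beta_{t-1})$ is precisely the root-survival probability used in Claim~\ref{clm:degree-distribution-after-t-steps}. The analysis of the recursion (eventual doubly-exponential decay $\log(1/q_t)=\Theta((k-1)^t)$) is also correct.

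However, the moment arguments are sketched at a level that hides the real difficulty. For the upper bound, the step ``$\E[\#\{v:v\text{ survives round }t\}]\sim np_t$'' silently assumes $\Pr[v\text{ survives}]=(1+o(1))p_t$, and this is exactly what fails at the depth where you want to apply Markov: the total-variation distance between $G[B_t(v)]$ and the truncated Galton--Watson tree is $\Theta(\mathrm{polylog}(n)/n)$ (coming from short cycles), and once $t\ge\log_{k-1}\log n+C$ you have $p_t=o(1/n)$, so this additive coupling error dominates $p_t$. Without further work you only get $\E[\#\text{survivors}]\le np_t+\mathrm{polylog}(n)$, which does not tend to $0$. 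Closing this requires showing that a vertex whose $t$-ball contains a cycle still survives with suitably small probability (or running a more careful witness-counting or domination argument); a na\"ive first moment over witness trees does not close either, since their expected number is $\exp(+\Theta((k-1)^t))$, vastly exceeding $p_t$. This is the substantive content of~\cite{JMT-16}, Theorem~1, and your proposal currently treats it as immediate.

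For the lower bound, the second-moment plan is sound, but the final bookkeeping is off: the expected number of \emph{pairs} $\{u,v\}$ whose $t$-balls intersect or contain a cycle is $\Theta(n\cdot\mathrm{polylog}(n))$, not $\mathrm{polylog}(n)/n$; the latter is the probability that one fixed pair is bad. The Chebyshev computation still closes since at $t=\log_{k-1}\log n-C$ you may take $C$ large enough that $p_t=n^{-\eta}$ with $\eta<1/2$, so $n\,\mathrm{polylog}(n)=o(n^2p_t^2)$; and the per-pair coupling error $\mathrm{polylog}(n)/n$ is then $o(p_t^2)$, which is the quantitative condition actually needed (``total error $o(1)$'' by itself is not enough once $p_t\to 0$). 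These estimates should be stated explicitly rather than waved at.
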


\begin{rem*}
For $c>c_{k}$, the typical stabilization time is $T^*\left(n,c\right)=\Theta\left(\log n\right)$;
see~\cite[Theorem~4]{AM-15} for the upper bound and~\cite[Theorem~3]{JMT-16}
for the lower bound.
\end{rem*}
In Section~\ref{sec:technical-background} we analyse  the
peeling process further, and prove a related result, which may be of independent
interest. While it takes $\log_{k-1}\log n+\Theta\left(1\right)$ time by
Theorem~\ref{thm:stabilization-time} to peel the \emph{entire} graph,
the graph is WHP already shattered into tiny fragments after a \emph{constant
}number of iterations:
\begin{thm}
\label{thm:shatter-time}Fix $k\ge3$ and $c<c_{k}$. There exists
a constant $t^{\dagger}=t^{\dagger}\left(c\right)$ such that, in
the $k$-peeling process on $\ggnc$, WHP all connected components
of $G_{t^{\dagger}}$ have size $o\left(\log^{3}n\right)$.
\end{thm}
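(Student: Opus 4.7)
The plan is to couple a breadth-first search of $G_{t^\dagger}$-components with a subcritical Galton--Watson branching process, in the spirit of the cavity-style analysis of~\cite{JMT-16}, and to finish with a union bound over starting vertices. I first choose the constant $t^\dagger=t^\dagger(c)$ from the peeling recursion on the Poisson Galton--Watson tree. On the GW tree with $\Poisson(c)$ offspring distribution, define the message-passing survival probability $p_t$ by $p_0=1$ and $p_t=\prob[\Poisson(c\,p_{t-1})\ge k-1]$; this is the probability that a directed tree-edge still transmits ``support'' after $t$ peeling rounds, equivalently that the subtree at its far endpoint contributes at least $k-1$ surviving child-edges. Standard arguments, as in the analysis underlying Theorem~\ref{thm:stabilization-time}, show that for $c<c_k$ the iterates $p_t$ decrease monotonically to the smallest fixed point of $x\mapsto \prob[\Poisson(cx)\ge k-1]$, which is $0$. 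I therefore fix $t^\dagger$ large enough that $c\,p_{t^\dagger-1}<1/2$.

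Next, I exploit the local tree-likeness of $\gnc$: for any fixed $v$, the ball $B_{t^\dagger+1}(v)$ in $G$ is tree-like except with probability $O(1/n)$, and in that case its law is within total variation $O(1/n)$ of the first $t^\dagger+1$ generations of the above GW tree. Since whether an edge $\{u,w\}$ lies in $G_{t^\dagger}$ is determined by $B_{t^\dagger}(u)\cup B_{t^\dagger}(w)$, the corresponding conditional probabilities agree with their branching-process counterparts up to a vanishing error.

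I then explore the $G_{t^\dagger}$-component of $v$ by BFS, revealing $G$ lazily: when a frontier vertex $u$ is expanded I expose its unrevealed $G$-neighbors, and for each candidate $w$ I further expose the radius-$(t^\dagger-1)$ neighborhood of $w$ in the still-unexposed portion of $G$, which is enough to decide whether $\{u,w\}\in G_{t^\dagger}$. While the exploration remains tree-like, each candidate becomes a BFS child with conditional probability at most $p_{t^\dagger-1}$, and these events are conditionally independent, so the BFS tree is stochastically dominated by a Galton--Watson process with $\Poisson(c\,p_{t^\dagger-1})$ offspring. This process is subcritical by the choice of $t^\dagger$, so the standard exponential tail bound yields $\prob[|T_{\mathrm{BP}}|\ge m]\le e^{-\alpha m}$ for some $\alpha=\alpha(c)>0$. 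Taking $m=\lceil\log^3 n\rceil$ gives probability $n^{-\omega(1)}$, to which I add an $O(m^2/n)=O(\log^6 n/n)$ error for the failure of tree-likeness during an exploration of $O(m)$ vertices; a union bound over the $n$ starting vertices completes the proof.

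The main difficulty is making the stochastic domination in the third step rigorous: the recursive definition of edge-survival in $G_{t^\dagger}$ creates conditional dependencies whenever lazily exposed neighborhoods overlap, and conditioning on the BFS history already passing through a given edge propagates through the message-passing variables. I would handle this either via a two-round exposure of $\gnc$, so that the edges used to certify peeling survival are independent of those traversed by the BFS, or by directly tracking the total-variation distance between the actual exploration and the ideal Galton--Watson process. In both cases, the fact that only $O(\log^3 n)$ vertices are visited with overwhelming probability makes the resulting error terms negligible.
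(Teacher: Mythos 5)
Your overall route (couple the BFS exploration of $G_{t^\dagger}$-components with a subcritical branching process) is genuinely different from the paper's, which instead computes the limiting degree histogram of $G_t$ (Claim~\ref{clm:degree-distribution-after-t-steps}), notes that conditionally on this histogram the edges of $G_t$ are still uniform, and applies the Molloy--Reed criterion (Lemma~\ref{lem:molloy-reed}). However, your key domination step is incorrect as stated. Your $p_t$ is the paper's $\beta_t$, and the problem is the conditioning you inherit when you reach a vertex $w$ through an edge of $G_{t^\dagger}$: that edge survived precisely because at least $k-1$ of $w$'s \emph{other} neighbours sent surviving messages for $t^\dagger-1$ rounds, so conditionally on the BFS history $w$ typically has at least $k-1$ candidate children, each of which completes the remaining survival requirements with probability of order $\beta_{t^\dagger}/\beta_{t^\dagger-1}$ --- not at most $\beta_{t^\dagger-1}$, and not independently of the history. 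The correct size-biased branching factor of the exploration is $c\,\Psi_{\ge k-2}\left(c\beta_{t^\dagger-1}\right)$, which is exactly what the paper's condition $Q_{t^\dagger}<0$ encodes (via Claim~\ref{clm:poisson-convolution}); it is not $c\beta_{t^\dagger-1}$. In particular your choice of $t^\dagger$ via $c\,p_{t^\dagger-1}<1/2$ does not guarantee subcriticality: for $k=3$ and $c$ close to $c_3\approx 3.35$ the true factor at that $t^\dagger$ is about $c\left(1-e^{-1/2}\right)>1$. So the dependence you flag in your last paragraph is not a small perturbation to be absorbed by a two-round exposure or a total-variation estimate; it changes the offspring mean by a constant factor, and any repair must redo the size-biased computation --- at which point you have essentially reconstructed the paper's argument, with the added burden of justifying the domination rigorously.

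A second, independent gap is the final union bound. Tree-likeness of a constant-radius exploration from a fixed starting vertex fails with probability $\Theta\left(\log^{O(1)}n/n\right)$, and multiplying such a per-vertex error by the $n$ starting vertices gives a diverging quantity, so you cannot conclude that WHP \emph{all} components are small this way; you would need a separate argument for explorations that meet a cycle (or a global structural statement, e.g.\ that few vertices have non-tree-like constant-radius balls and that such components can be handled directly). The paper sidesteps both difficulties at once by conditioning on the degree histogram $D^{t}$ of $G_{t}$ and invoking Lemma~\ref{lem:molloy-reed}(2), which moreover delivers the stronger bound $O\left(\Delta^{2}\log n\right)=O\left(\log^{3}n/\log^{2}\log n\right)$ coming from the maximum degree $O\left(\log n/\log\log n\right)$.
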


\begin{rem*}
Our proof actually yields the somewhat better bound, $O\left(\log^3n/\log^2\log n\right)$, in Theorem~\ref{thm:shatter-time}, and consequently in Theorem~\ref{thm:breaker-win}. We use the term $o\left(\log^3n\right)$ in both theorems for brevity.
\end{rem*}

In Section~\ref{sec:prelim}
we provide some notations and a few bounds we use throughout the paper.
In Section~\ref{sec:breaker}, we prove Theorem~\ref{thm:breaker-win},
by showing how \textsc{Breaker} can WHP limit the radius of \textsc{Maker}'s
components (via Theorem~\ref{thm:shatter-time}) and hence limit
their size to poly-logarithmic. In Section~\ref{sec:maker} we prove
Theorem~\ref{thm:maker-win}, by showing how \textsc{Maker} can WHP
build a tree of linear size within the $\left(b+2\right)$-core of
$\gnc$. This is done in a two-step strategy:
\textsc{Maker} carefully nurtures a small sapling, which she afterwards
grows arbitrarily into a full-scale tree. The proofs of Theorem~\ref{thm:shatter-time}
and of two ingredients of the proof of Theorem~\ref{thm:maker-win}
are rather technical; we provide required background in Section~\ref{sec:technical-background}
and the proofs themselves in Section~\ref{sec:technical-proofs}.

\section{\label{sec:prelim}Preliminaries}

\subsection{Notation}

We use standard graph theory terminology, and in particular the following.
For a given graph $G$ we denote by $V(G)$ and $E(G)$ the set of
its vertices and the set of its edges, respectively. The \emph{excess}
of $G$ is defined as $\xs\left(G\right)=\left|E\left(G\right)\right|-\left|V\left(G\right)\right|$.
For two disjoint sets of vertices $A,B\subseteq V$ we denote by $E(A,B)$
the set of all edges $ab\in E$ with $a\in A$ and $b\in B$. For
a subgraph $F\subset G$ we denote its \emph{edge boundary} (with
respect to $G$) by $\partial F=E\left(V\left(F\right),V\left(G\right)\setminus V\left(F\right)\right)$.
For a positive integer $t$, the \emph{$t$-neighbourhood} of a vertex
$v\in V\left(G\right)$, also referred to as the \emph{ball} of radius
$t$ around $v$, is the subset of vertices of $G$ whose distance
to $v$ is at most $t$.

\medskip{}
Given a rooted tree $T$ and an integer $k\ge3$, a non-leaf vertex
$v\in V\left(T\right)$ is called \emph{$k$-light} if $v$ has less
than $k$ children in $T$ and \emph{$k$-heavy} otherwise. A \emph{tree
path} of $T$ is a sequence $\left(v_{0},v_{1},\ldots,v_{j}\right)$
of vertices of $T$ such that $v_{i-1}$ is the parent of $v_{i}$
for all $i=1,\ldots,j$. A tree path is called \emph{$k$-light} if
all its vertices are $k$-light.
The \emph{level} of any vertex $v\in V(T)$ is the length of the unique path in $T$ between $v$ and the root, and the \emph{height} of $T$ (assuming $T$ is finite) is the maximum level of a vertex $v \in V(T)$.

\medskip{}
For two integers $n>0$ and $m\ge0$, let $\D_{n,2m}$ denote the set of all nonnegative integer vectors $\dd = (d_1,\dots,d_n)$ such that $\sum d_i = 2m$. Each $\dd \in \D_{n,2m}$ is called a \emph{degree sequence}, and denote its maximum degree by~$\Delta(\dd) := \max_i d_i$.

\medskip{}
At any point during the game, an unclaimed edge is called \emph{free}. The
act of claiming one free edge by one of the players is called a \emph{step}.
\textsc{Breaker}'s $b$ successive steps (and \textsc{Maker}'s single
step) are called a \emph{move}. A \emph{round} in the game consists
of one move of the first player, followed by one move of the second
player. Since being the first player is never a disadvantage, we will
assume \textsc{Maker} starts when proving Theorem~\ref{thm:breaker-win}, and assume \textsc{Breaker} starts when proving Theorem~\ref{thm:maker-win}.

\medskip{}

Throughout the paper we use the well-known bound $\binom{n}{k}\le n^{k}/k!\le\left(en/k\right)^{k}$
for nonnegative integers~$n$ and~$k$. Let $\left[n\right]_{k}=n\left(n-1\right)\cdots\left(n-k+1\right)$,
and note that $\left[n\right]_{k}=n!/\left(n-k\right)!$ for $n\ge k$.
For a positive integer $m$, let $\left(2m-1\right)!!=\left(2m-1\right)\left(2m-3\right)\cdots3\cdot1$
and $\left(2m\right)!!=\left(2m\right)\left(2m-2\right)\cdots4\cdot2=2^{m}\cdot m!$.

\medskip{}

In this paper we make extensive use of the Poisson distribution. For
an integer $j\ge0$ and a real number $\lambda\ge0$, let $\Psi_{j}\left(\lambda\right)=\prob\left[\Poisson\left(\lambda\right)=j\right]=e^{-\lambda}\lambda^{j}/j!$, let $\Psi_{\ge j}\left(\lambda\right)=\prob\left[\Poisson\left(\lambda\right)\ge j\right]=\sum_{i\ge j}\Psi_{i}\left(\lambda\right)$, and let $\Psi_{<j}\left(\lambda\right)=\prob\left[\Poisson\left(\lambda\right) < j\right]= 1-\Psi_{\ge j}$.
Note that for any real number $\lambda \ge 0$ and for any two integers $j\ge\ell\ge 0$ we have
\begin{equation}\label{eq:poisson-obs}
\left[j\right]_{\ell}\Psi_{j}\left(\lambda\right)
= e^{-\lambda}\lambda^{j}/(j-\ell)!
= \lambda^{\ell}\Psi_{j-\ell}\left(\lambda\right).
\end{equation}

Given a positive integer $\ell>0$ and a real number $\lambda>0$,
let $Z_{\ell}\left(\lambda\right)$ denote an $\ell$-truncated Poisson
random variable, which is a $\Poisson\left(\lambda\right)$ random
variable conditioned on being at least $\ell$. In other words, $\Pr\left[Z_{\ell}\left(\lambda\right)<\ell\right]=0$
and $\Pr\left[Z_{\ell}\left(\lambda\right)=j\right]=\Psi_{j}\left(\lambda\right)/\Psi_{\ge\ell}\left(\lambda\right)$
for $j\ge\ell$.

\medskip{}
Our proofs are asymptotic in nature and whenever necessary, we assume $n$ is large enough. We omit floor and ceiling signs when these are not crucial. All logarithms in this paper, unless specified otherwise, are natural.

\subsection{Local structure of $\protect\gnc$ }

We also provide several results about the typical local structure
of $\gnc$, which will be useful later. We begin with the following
bound on the volume of balls in $\gnc$.
\begin{lem}[{\cite[Lemma 1]{CL-01}}]
\label{lem:volume-of-balls-in-Gnp}Let $\ggnc$ for $c>1$. Then
WHP, for every vertex $v\in V\left(G\right)$ and for every $1\le t\le n$,
there are at most $2t^{3}c^{t}\log n$ vertices in $G$ within distance
$t$ of $v$.
\end{lem}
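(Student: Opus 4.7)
My plan is to run a breadth-first search from each vertex $v$ coupled with a Chernoff-type concentration argument. Fix $v\in V(G)$ and reveal the edges of $G$ in BFS order, exposing each edge only when required. Let $L_s$ denote the number of vertices at distance exactly $s$ from $v$, and set $B_t:=\sum_{s=0}^{t}L_s$. By Bernoulli's inequality, conditional on the BFS state through level $s-1$, the count $L_s$ is stochastically dominated by $\Bin\bigl(n,\,1-(1-c/n)^{L_{s-1}}\bigr)\le\Bin(n,cL_{s-1}/n)$, a binomial with conditional mean at most $cL_{s-1}$. This is the standard coupling of BFS in $\gnc$ with a branching process having mean offspring $c$.

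I would then apply a Chernoff bound at each level to conclude that $L_s\le cL_{s-1}+C\log n$ with probability at least $1-n^{-3}$, where $C=C(c)$ is an absolute constant chosen large enough to survive a union bound over the $n$ starting vertices and all levels $s\le t$. Note that the lemma is vacuous once $2t^3c^t\log n\ge n$, so it suffices to handle $t=O(\log n)$, which keeps the union bound manageable. Unwinding the recurrence inductively gives $L_s\le C'c^s\log n$ for some constant $C'=C'(c)$, and summing the geometric-like series gives $B_t(v)\le C''c^t\log n$, comfortably within the stated $2t^3c^t\log n$ bound (the polynomial prefactor $t^3$ is generous slack).

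The main technical point is to keep the Chernoff slack \emph{additive} rather than multiplicative, so that it does not inflate the exponential base $c$ to $c(1+\delta)$ over the $t$ iterations: with $L_s\le cL_{s-1}+\Theta(\log n)$ the solution is $\Theta(c^s\log n)$, whereas a multiplicative $(1+\delta)$ factor per step would yield $(c(1+\delta))^s$ instead. In the regime where $cL_{s-1}\gg\log n$ one does use Chernoff in its $(1+o(1))$-multiplicative form, but then those $o(1)$ factors accumulate only to a constant overall multiplier, preserving the $c^s$ base. A final union bound over $v\in V(G)$ and $s\le t$ finishes the argument WHP.
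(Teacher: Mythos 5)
The paper does not actually prove this lemma; it is quoted from Chung and Lu~\cite{CL-01}. Your reconstruction --- BFS from each vertex, stochastic domination of each level by $\Bin\left(n,cL_{s-1}/n\right)$, level-by-level Chernoff bounds, and a union bound over the $n$ starting vertices and the $O(\log n)$ relevant radii (since the bound is vacuous once $2t^3c^t\log n\ge n$) --- is the right plan, and you correctly identify the crux: the Chernoff slack must not inflate the exponential base $c$ across $t$ iterations.

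The sentence you use to dispose of the large-$L_{s-1}$ regime, namely that ``those $o(1)$ factors accumulate only to a constant overall multiplier,'' is an assertion rather than an argument, and read literally it is false: $\Theta(\log n)$ factors of size $1+\Theta(1/\sqrt{\log n})$ --- which is exactly what a multiplicative Chernoff bound at error probability $n^{-\Theta(1)}$ delivers when $L_{s-1}=\Theta(\log n)$ --- multiply to $e^{\Theta(\sqrt{\log n})}$, which overwhelms the $t^3$ slack. What really saves the argument is not that the multiplicative errors $\epsilon_s$ are individually small, but that the \emph{absolute} Chernoff deviation $O\bigl(\sqrt{cL_{s-1}\log n}\bigr)$ is geometrically negligible compared to the benchmark $c^s\log n$ once you feed in the inductive upper bound $L_{s-1}=O\left(c^{s-1}\log n\right)$. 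Concretely, the one-sided Chernoff bound gives, uniformly in $L_{s-1}$, that $L_s\le cL_{s-1}+10\sqrt{cL_{s-1}\log n}+10\log n$ with conditional probability at least $1-n^{-6}$. Writing $M_s:=L_s/(c^s\log n)$, this reads $M_s\le M_{s-1}+10\sqrt{M_{s-1}}\,c^{-s/2}+10c^{-s}$, and since $\sum_s c^{-s/2}$ and $\sum_s c^{-s}$ converge, the increasing sequence $(M_s)$ is bounded by a constant $M=M(c)$ solving $M\le 1+10\sqrt{M}/(\sqrt{c}-1)+10/(c-1)$. Hence $L_s\le M c^s\log n$ and $B_t=O\left(c^t\log n\right)$, comfortably inside $2t^3c^t\log n$. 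This avoids the two-regime case split entirely and is, I believe, what you intended; but as written, the accumulation step is a genuine gap in the argument.
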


Next we show that even though $G$ is not acyclic WHP for $c>1$,
we do not expect short cycles; in other words, local neighbourhoods
in $G$ are trees.
\begin{claim}
\label{clm:no-cycles}The probability of having a cycle in the $t$-neighbourhood
of a given vertex $v$ in $\ggnc$ is $o\left(1\right)$ for $t = \lceil \alpha\log n\rceil$, where $\alpha = \alpha(c)$ is some positive constant.
\end{claim}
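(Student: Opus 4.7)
The plan is to combine the neighbourhood-size estimate of Lemma~\ref{lem:volume-of-balls-in-Gnp} with a deferred-decisions exposure of a BFS tree rooted at $v$. The key observation is that $B_t(v)$ contains a cycle if and only if at least one edge of $G$ has both endpoints in $B_t(v)$ but is not an edge of the BFS tree rooted at $v$; so it suffices to bound the probability of such a ``chord'' appearing. As a first step I would apply Lemma~\ref{lem:volume-of-balls-in-Gnp} to the vertex $v$ with $t=\lceil \alpha \log n \rceil$, obtaining that with probability $1-o(1)$, $|B_t(v)| \le S := 2t^{3}c^{t}\log n$. (For $c\le 1$ the same bound holds trivially since every component of $\gnc$ has WHP size $O(\log n)$, so in the sequel I focus on the main case $c>1$.)

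Next I would expose $G$ by running breadth-first search from $v$ up to depth $t$, using the principle of deferred decisions: when a vertex $u$ is processed, query the status of every edge $\{u,w\}$ with $w$ not yet in the discovered tree, and reveal $w$ as a child of $u$ exactly when the edge is present. A present non-tree edge between two vertices of $V(T)=B_t(v)$ is never queried by this procedure, since otherwise one of its endpoints would have become a child of the other. Consequently, conditionally on the BFS tree $T$, each pair $\{u,w\}\subseteq V(T)$ whose edge was not queried is present in $G$ independently with probability $c/n$, and any such present edge creates a cycle inside $B_t(v)$. A union bound then yields
\[
\prob\bigl[\exists\text{ cycle in }B_t(v)\bigm| T\bigr] \;\le\; \binom{|V(T)|}{2}\cdot \frac{c}{n}.
\]

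Combining with the WHP size bound,
\[
\prob\bigl[\exists\text{ cycle in }B_t(v)\bigr]
\;\le\; \prob\bigl[|B_t(v)|>S\bigr] + \binom{S}{2}\cdot\frac{c}{n}
\;=\; o(1) + O\!\left(\frac{t^{6}\,c^{2t}\log^{2}n}{n}\right).
\]
With $t=\lceil \alpha\log n\rceil$ the error term becomes $O\bigl(\mathrm{polylog}(n)\cdot n^{2\alpha\log c - 1}\bigr)$, so any constant $\alpha=\alpha(c)\in\bigl(0,\tfrac{1}{2\log c}\bigr)$ makes the bound $o(1)$. The only subtle point is the conditional-independence statement in the second step: without it one would be tempted to bound the expected number of ``lollipop'' substructures (a path from $v$ to some vertex of a cycle), but the cycle length in such a lollipop is only controlled by $|B_t(v)|$, leading to expected counts of order $c^{\Theta(S)}/n$, which are prohibitively large. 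The deferred-decisions argument circumvents this by bounding only the number of potential chords, $\binom{S}{2}$, rather than unicyclic configurations of unbounded cycle length.
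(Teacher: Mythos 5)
Your proof takes essentially the same route as the paper's: apply Lemma~\ref{lem:volume-of-balls-in-Gnp} to bound $|B_t(v)|$ by $S=2t^3c^t\log n$ WHP, then union-bound $\binom{S}{2}\cdot c/n$ over potential chords and choose $\alpha$ small enough that $c^{2t}\cdot\mathrm{polylog}(n)/n=o(1)$; your explicit deferred-decisions discussion just unpacks the conditional-independence step that the paper's one-line ``probability of a back-edge or side edge \ldots is bounded by $\binom{s}{2}c/n$'' takes for granted. One small slip: for $c=1$ components are $\Theta(n^{2/3})$, not $O(\log n)$ --- the paper's ``it suffices to prove this for $c>1$'' is better justified by monotone coupling ($B_t(v)$ and any cycle in it persist under adding edges), though this has no bearing on the regime $c<c_{b+2}$ where the claim is actually used.
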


\begin{proof}
It suffices to prove this for $c>1$. By Lemma~\ref{lem:volume-of-balls-in-Gnp},
a breadth-first search from $v$ discovers WHP at most $s=\left\lfloor 2t^{3}c^{t}\log n\right\rfloor $
vertices in the $t$-neighbourhood of $v$; the probability of having
either a back-edge or a side edge closing a cycle, in addition to the tree edges, among the first $s$ vertices discovered, is bounded
by
\[
\binom{s}{2}c/n<s^{2}c/n\le4t^{6}c^{1+2t}\log^{2}n/n,
\]
which is $o\left(1\right)$ for our choice of $t$ and for sufficiently small $\alpha$ (which only depends on $c$).
\end{proof}
Last, we show that sufficiently large connected subgraphs of $G$
expand by only a constant factor WHP.
\begin{claim}
\label{clm:vertex-expansion}Let $\ggnc$ for $c>1$. Then WHP, every
connected subset $U\subseteq V\left(G\right)$ of size $\left|U\right|\ge\log n$
has at most $3c\left|U\right|$ neighbours in $G$.
\end{claim}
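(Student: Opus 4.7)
The plan is a first-moment / union-bound argument. For each integer $s \ge \log n$, I will estimate the expected number of pairs $(U, T)$ where $U \subseteq V(G)$ has size $s$, $T$ is a labelled spanning tree on $U$ with $T \subseteq G$, and $|N_G(U) \setminus U| > 3cs$. Since every connected subset of size $s$ admits at least one spanning tree, Markov's inequality reduces the claim to showing that this expected count, summed over $s \ge \log n$, tends to $0$.

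The key structural observation is independence: the event $\{T \subseteq G\}$ is determined by the edges with both endpoints in $U$, whereas the event $\{|N_G(U) \setminus U| > 3cs\}$ is determined by the edges with exactly one endpoint in $U$, and these two edge sets are disjoint. Cayley's formula yields $\binom{n}{s} s^{s-2}$ pairs $(U, T)$, and $\Pr[T \subseteq G] = p^{s-1}$ for each. For the neighbourhood event, $|N_G(U) \setminus U|$ is distributed as $\mathrm{Bin}(n-s, q)$ with $q = 1 - (1-p)^s \le ps = cs/n$, so a Chernoff tail (in its sharp Kullback--Leibler form) gives $\Pr[|N_G(U)\setminus U| > 3cs] \le \exp(-\beta c s)$ with $\beta = 3\log 3 - 2 \approx 1.296$.

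Collecting the ingredients, the expected number of bad pairs of size $s$ is at most $\binom{n}{s} s^{s-2} p^{s-1} \exp(-\beta cs)$. Using $\binom{n}{s} \le n^s/s!$ together with $s! \ge (s/e)^s$, this simplifies to an expression of the form $\frac{n}{c s^2}\bigl(e c \cdot e^{-\beta c}\bigr)^s$. Summing over $s \ge \log n$ produces a geometric tail which is $o(1)$ whenever $e c \cdot e^{-\beta c} < 1/e$, i.e.\ $2 + \log c < \beta c$.

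The main obstacle is that this elementary summability criterion only holds once $c$ exceeds a fixed constant (numerically around $2.2$), not over the entire regime $c > 1$. To handle $c$ close to $1$ one must sharpen either (a) the Chernoff tail, exploiting that the true binomial mean is $(n-s)q$, which is strictly below $cs$ once $s$ is not negligible compared to $n$, thereby gaining an extra exponential factor; or (b) the count of connected $s$-sets, replacing Cayley's $s^{s-2}$ by a bound reflecting that local neighbourhoods in $\gnc$ behave like Galton--Watson trees with Poisson$(c)$ offspring (a Borel-distribution style count), which is strictly smaller for $c = \Theta(1)$. Either refinement, together with the independence argument above, should close the gap and yield the claim for all $c > 1$.
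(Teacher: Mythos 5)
Your plan is in essence the paper's own proof: a union bound over $s$-sets, pricing connectivity by summing over spanning trees (Cayley), using the independence of internal and boundary edge sets, and hitting the external neighbourhood, a binomial with mean at most $cs$, with a Chernoff tail. Your estimates are correct as far as they go, but the proposal is not a proof of the stated claim: it gives the statement only for $c$ above roughly $2.1$, and neither of your suggested repairs can close the range $1<c\lesssim 2.1$. The offending scale is the smallest one, $s=\Theta(\log n)$, where the summand is largest, and there (a) the true mean $(n-s)\left(1-(1-p)^{s}\right)$ differs from $cs$ only by a factor $1-O(\log n/n)$, so the hoped-for extra exponential gain is $e^{-o(1)}$; and (b) the Cayley count is not an overcount: for a fixed $s$-set with $s=O(\log n)$ the induced graph has $o(1)$ expected edges beyond a tree, so $\Pr[U\ \mathrm{connected}]=(1+o(1))\,s^{s-2}(c/n)^{s-1}$, i.e.\ the spanning-tree union bound is already asymptotically tight, and a Galton--Watson/Borel-type count gives the same answer (a factor of the form $e^{-\Theta(cs)}$ appears only when one counts whole components, i.e.\ sets with \emph{empty} boundary, which is the opposite of what is needed here). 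So for $c$ close to $1$ the divergence of your first moment at $s\asymp\log n$ is genuine, not slack in your bounds.

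The comparison with the paper is instructive. The paper runs the identical computation but quotes a Chernoff bound of the form $\exp(\delta^{3}-\delta^{2}cu/2)$ and applies it with $\delta=2$, i.e.\ decay rate $2$ per unit mean; your calculation shows this cannot be valid in the regime used, since the optimal large-deviation rate for exceeding three times the mean is $3\log 3-2\approx 1.296$ per unit mean (already $\Pr\left[X=\lceil 3cu\rceil\right]=e^{-(3\log 3-2+o(1))cu}$), and here $cu\ge c\log n\to\infty$. With a correct tail bound, the expected number of violating connected sets of size $\lceil\log n\rceil$ is $n^{\,2+\log c-(3\log 3-2)c+o(1)}$, which tends to infinity for all $c$ below roughly $2.1$, and a routine second-moment computation indicates such sets then actually exist WHP; so no sharpening of the counting can rescue the constant $3c$ near $c=1$. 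What your argument does prove, uniformly for $c>1$, is the claim with $3c$ replaced by a somewhat larger multiple of $c$, e.g.\ $4c$, since $4\log 4-3>2\ge(2+\log c)/c$ for $c\ge1$; and that weaker form suffices for the only use of the claim, in Proposition~\ref{prop:breaker-strategy-is-awesome}, where any constant expansion factor would do. In short: same approach as the paper and correct computations, but a genuine gap for $1<c\lesssim 2.1$ which your two proposed refinements do not close --- and which in fact exposes an over-strong Chernoff form in the paper's own proof rather than removable slack in yours.
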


\begin{proof}
Fix a set $U\subseteq V\left(G\right)$ of size $u=\left|U\right|$.
It is connected (and thus contains at least one of the $u^{u-2}$ possible
spanning trees of $U$) with probability at most $u^{u-2}\left(c/n\right)^{u-1}$.
The size of the external neighbourhood of $U$ is distributed $\Bin\left(n-u,1-\left(1-c/n\right)^{u}\right)$, stochastically dominated by
$\Bin\left(n,cu/n\right)$,
and thus by the Chernoff bound, it has more than $\left(1+\delta\right)cu$
neighbours with probability at most $\exp\left(\delta^{3}-\delta^{2}cu/2\right)$.
Therefore, the probability that, for some $u\ge\log n$, there exists
a connected set $U$ of $u$ vertices with more than $3cu$ neighbours
is bounded by
\begin{align*}
\sum_{u=\left\lceil \log n\right\rceil }^{n}\binom{n}{u}u^{u-2}\left(c/n\right)^{u-1}\exp\left(8-2cu\right) & \le\frac{e^{8}n}{c}\sum_{u=\left\lceil \log n\right\rceil }^{n}u^{-2}\left(ce^{1-2c}\right)^{u}\\
 & <\frac{e^{8}n}{c\log^{2}n}\sum_{u=\left\lceil \log n\right\rceil }^{\infty}e^{-u}\le\frac{e^{9}}{c\left(e-1\right)\log^{2}n}=o\left(1\right).\qedhere
\end{align*}
\end{proof}

\section{\label{sec:breaker}His Side}

We now present a strategy for \textsc{Breaker} to use in the $\left(1:b\right)$
component game on $\gnc$ for $c<c_{b+2}$,
and show that it WHP restricts \textsc{Maker}'s connected components
to poly-logarithmic size.

To facilitate the description of \textsc{Breaker}'s strategy, we introduce
some terminology. Denote the \emph{rank} of a vertex~$v$ with respect
to the $\left(b+2\right)$-peeling process $\left(G_{t}\right)_{t\ge0}$
of a graph $G=G_{0}$ by $\rho\left(v\right)=\inf\left\{ t\ge0\mid\deg_{G_{t}}\left(v\right)<b+2\right\} $.
When $G$ is $\left(b+1\right)$-degenerate, $\rho\left(v\right)\le  T^*\left(G\right)$
is finite for all $v$. In particular, this holds WHP for $\ggnc$
with $c<c_{b+2}$.

An edge $uv\in E\left(G\right)$ is called \emph{horizontal} if $\rho\left(u\right)=\rho\left(v\right)$
and \emph{vertical} otherwise. A \emph{horizontal component} (H-comp,
for short)~$C$ is a connected component in the graph consisting
of horizontal edges claimed by \textsc{Maker}, and its \emph{rank}
$\rho\left(C\right)$ is the rank shared by all its vertices. A vertical
edge $e$ is \emph{above} $C$ if $V\left(C\right)$ contains $e$'s
endpoint of lesser rank. Finally, denote by $F\left(C\right)$ the
set of free edges incident with $V\left(C\right)$ in $G_{t}$, where
$t=\rho\left(C\right)$. We partition $F\left(C\right)$ to vertical
and horizontal edges, writing $F\left(C\right)=F_{V}\left(C\right)\cup F_{H}\left(C\right)$.

\paragraph{\textsc{Breaker}'s Strategy $\mathcal{S}_{B}$:}

We assume \textsc{Maker} is the first player, so each round consists
of a move by \textsc{Maker} and a response by \textsc{Breaker}. For
a particular round, let $e=uv$ be the edge claimed by \textsc{Maker}
on her move. Assume without loss of generality that $\rho\left(u\right)\le\rho\left(v\right)$,
and let $C$ be the H-comp containing $u$ in \textsc{Maker}'s graph
after this move. \textsc{Breaker}'s strategy $\mathcal{S}_{B}$ is
to claim in each of his $b$ steps during this round:
\begin{itemize}
\item an arbitrary edge from $F_{V}\left(C\right)$, if $F_{V}\left(C\right)$
is nonempty; otherwise
\item an arbitrary edge from $F_{H}\left(C\right)$, if $F_{H}\left(C\right)$
is nonempty; otherwise
\item an arbitrary free edge.
\end{itemize}
It is clear that \textsc{Breaker} can follow $\mathcal{S}_{B}$ throughout
the game. The following claim characterises the horizontal components
allowed by $\mathcal{S}_{B}$:
\begin{claim}
\label{clm:H-comp-invariants}At the beginning of every round, every
H-comp $C$ satisfies exactly one of the following:
\begin{enumerate}
\item[$(i)$] \textsc{ Maker} claimed exactly one edge above $C$ and $F\left(C\right)$
is empty.
\item[$\left(ii\right)$] \textsc{ Maker} claimed no edges above $C$ and $\left|F\left(C\right)\right|\le\max\left\{ 0,b+2-\left|V\left(C\right)\right|\right\} $.
\end{enumerate}
\end{claim}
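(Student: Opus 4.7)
The plan is to prove the invariant by induction on the number of completed rounds. Before round one, every vertex $v$ forms a singleton H-comp $\{v\}$ with no Maker-claimed edges above it, and $F(\{v\})$ equals the set of edges of $G_{\rho(v)}$ incident with $v$, of cardinality $\deg_{G_{\rho(v)}}(v)\le b+1$ by the definition of $\rho(v)$. This matches case $(ii)$ with $|V(\{v\})|=1$, establishing the base case.

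For the inductive step I rely on a simple property of Maker's chosen edge $e=uv$ (with $\rho(u)\le\rho(v)$): the peeling process removes $e$ at step $\rho(u)+1$, so $e\in G_{\rho(u)}$ but $e\notin G_t$ for $t>\rho(u)$. Hence $e\in F(C_u)$, whereas if $e$ is vertical then $e\notin F(C_v)$ because $e\notin G_{\rho(v)}$. It follows that every H-comp $C'$ distinct from $C$ is left untouched by $e$, and since Breaker's $b$ steps can only decrease $|F(C')|$ and never introduce a Maker edge above $C'$, the invariant for such $C'$ is preserved automatically.

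For the H-comp $C$ containing $u$ after Maker's move I split into two cases according to the type of $e$. If $e$ is vertical then $C=C_u$; since $e\in F(C_u)$ and case $(i)$ would require $F(C_u)=\emptyset$, the hypothesis forces $C_u$ into case $(ii)$, giving $|F(C_u)|\le b+2-|V(C_u)|\le b+1$. Maker's claim of $e$ together with Breaker's $b$ prioritised steps inside $F_V(C)\cup F_H(C)$ then empty $F(C)$, while exactly one Maker edge (namely $e$) now lies above $C$, so case $(i)$ holds. If $e$ is horizontal then $e\in F(C_u)\cap F(C_v)$, forcing both $C_u$ and $C_v$ into case $(ii)$ with $|V(C_u)|,|V(C_v)|\le b+1$; inclusion-exclusion for $F(C)=(F(C_u)\cup F(C_v))\setminus\{e\}$ yields
\[
|F(C)|\le|F(C_u)|+|F(C_v)|-2\le 2b+2-|V(C)|
\]
right after Maker's move, and Breaker's $b$ further edges in $F(C)$ reduce this to $\max\{0,b+2-|V(C)|\}$; no edge above $C$ has ever been claimed (neither was above $C_u$ or $C_v$, and $e$ itself is horizontal), so case $(ii)$ holds. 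The two cases are mutually exclusive because they differ in whether one or zero Maker edges lie above $C$. The only delicate step is the horizontal merger, where $e$ must be carefully subtracted from both $F(C_u)$ and $F(C_v)$; once the inclusion-exclusion is handled, the rest is direct arithmetic.
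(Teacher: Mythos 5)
Your proof is correct and follows essentially the same route as the paper: induction on rounds, the observation that only the H-comp containing Maker's new edge is affected, and a case split on whether that edge is vertical (yielding case $(i)$ after Breaker empties the at most $b$ remaining free edges) or horizontal (merging two case-$(ii)$ components and using the bound $|F(C)|\le 2(b+1)-|V(C)|$ before Breaker's $b$ steps). The only difference is your explicit remark that a vertical edge $uv$ does not lie in $F(C_v)$ because $e\notin G_{\rho(v)}$, a small point the paper leaves implicit.
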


\begin{proof}
We prove this by induction on the number of rounds. Before the game
starts, $\left(ii\right)$ holds for every H-comp $C$ since no edges
have been claimed, $\left|V\left(C\right)\right|=1$ and $\left|F\left(C\right)\right|\le b+1$
by definition of $\left(b+2\right)$-rank. Assume the claim holds
at the beginning of round $r\ge0$; we show it holds at the end
of that round. Let $e=uv$ be the edge claimed by \textsc{Maker} on
her $r$th move, and assume WLOG that $\rho\left(u\right)\le\rho\left(v\right)$.
Denote by $C_{u}$ and $C_{v}$, respectively, the H-comps containing
$u$ and $v$ at the beginning of round $r$, and by $C$ the H-comp
containing $u$ after her move. It remains to show that after \textsc{Breaker}'s
move $C$ satisfies either $\left(i\right)$ or $\left(ii\right)$
since no other H-comp is affected by \textsc{Maker}'s move, and \textsc{Breaker}'s
move cannot disrupt an H-comp which was already satisfying $\left(i\right)$
or $\left(ii\right)$ from doing so. We distinguish between two cases:
\begin{casenv}
\item If $e$ is vertical, observe that $C=C_{u}$. Since we had $e\in F\left(C\right)$
before \textsc{Maker}'s move, $e$ must be the first edge \textsc{Maker}
claimed above $C$ by assumption. Before \textsc{Breaker}'s move we
have $\left|F\left(C\right)\right|\le b$ (as $e$ is already claimed),
so he claims all of $F\left(C\right)$ according to $\mathcal{S}_{B}$.
\item If $e$ is horizontal, $C=C_{u}\cup C_{v}$. Since both $F\left(C_{u}\right)$
and $F\left(C_{v}\right)$ contained $e$ at the beginning of the
round, both satisfied $\left(ii\right)$ and thus\textsc{ Maker} claimed
no edges above $C$. Moreover, before \textsc{Maker}'s move we had
$0<\left|F\left(C_{u}\right)\right|\le b+2-\left|V\left(C_{u}\right)\right|$,
and similarly for $v$, thus before \textsc{Breaker}'s move we have
\[
\left|F\left(C\right)\right|=\left|\left(F\left(C_{u}\right)\setminus\left\{ e\right\} \right)\cup\left(F\left(C_{v}\right)\setminus\left\{ e\right\} \right)\right|\le2\left(b+1\right)-\left|V\left(C\right)\right|
\]
 and after his move, according to $\mathcal{S}_{B}$, either $F\left(C\right)$
is empty or $\left|F\left(C\right)\right|\le b+2-\left|V\left(C\right)\right|$.\qedhere
\end{casenv}
\end{proof}
Claim~\ref{clm:H-comp-invariants} yields the following two corollaries:
\begin{cor}
\label{cor:H-comp-size}Throughout the game $\left|V\left(C\right)\right|\le2\left(b+1\right)$
for every H-comp $C$.
\end{cor}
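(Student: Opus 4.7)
The plan is to derive Corollary~\ref{cor:H-comp-size} directly as a short consequence of Claim~\ref{clm:H-comp-invariants}, by tracking the size of an H-comp at the moment of its last horizontal merge. The main observation I will exploit is that case~$(i)$ of the claim has $F(C)=\emptyset$, so only H-comps in case~$(ii)$ can possibly participate in a horizontal merge; and for an H-comp in case~$(ii)$, requiring a single free edge already forces $|V(C)|\le b+1$.

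Fix a round and let $C$ be an H-comp in \textsc{Maker}'s graph at the start of that round. If no horizontal edge of \textsc{Maker} has ever touched $V(C)$, then $C$ is a singleton and $|V(C)|=1\le 2(b+1)$. Otherwise, consider the most recent round in which \textsc{Maker} claimed a horizontal edge $e=uv$ that caused $C$ to change, and let $C_u, C_v$ be the two distinct H-comps (at the start of that round) containing $u$ and $v$. Since $e$ was free and horizontal at the beginning of the round, $e\in F(C_u)\cap F(C_v)$; in particular, $|F(C_u)|,|F(C_v)|\ge 1$. By Claim~\ref{clm:H-comp-invariants}, neither $C_u$ nor $C_v$ can be in case~$(i)$ (whose $F$ is empty), so both satisfy case~$(ii)$, giving $|F(C_w)|\le \max\{0,b+2-|V(C_w)|\}$ for $w\in\{u,v\}$. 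Combining this with $|F(C_w)|\ge 1$ yields $|V(C_u)|,|V(C_v)|\le b+1$.

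Since this was the most recent round in which $C$ changed, the current $C$ equals $C_u\cup C_v$ as vertex sets, so
\[
|V(C)| = |V(C_u)|+|V(C_v)| \le 2(b+1),
\]
which is the desired bound. There is no real obstacle here: the only subtlety is making sure one picks the \emph{last} modification of $C$, so that the bound $|V(C_u)|,|V(C_v)|\le b+1$ inherited from case~$(ii)$ of Claim~\ref{clm:H-comp-invariants} can be applied verbatim to the sizes immediately before that merge.
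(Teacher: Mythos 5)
Your proof is correct and follows the same underlying idea as the paper: from Claim~\ref{clm:H-comp-invariants}, any H-comp still incident with a free edge must be in case~$(ii)$ and hence have at most $b+1$ vertices, so the largest possible H-comp arises from Maker merging two such H-comps across a free horizontal edge. You merely make explicit the step of fixing the most recent merge, which the paper leaves implicit.
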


\begin{proof}
Indeed, no free edges are incident with H-comps of size $b+2$ or more
by Claim \ref{clm:H-comp-invariants}, and thus the largest H-comp
possibly achievable by \textsc{Maker} is obtained by claiming a free
horizontal edge between two H-comps of size $b+1$ each.
\end{proof}
\begin{cor}
\label{cor:T-gamma}Given a connected component $\Gamma$ of \textsc{Maker}'s
graph, if we contract every H-comp in $\Gamma$ to a single vertex,
we get a tree $T_{\Gamma}$ of height at most $\rho\left(C_{0}\right)$,
where $C_{0}$ is the H-comp in $\Gamma$ of maximal rank (that is,
the H-comp corresponding to the root of $T_{\Gamma}$).
\end{cor}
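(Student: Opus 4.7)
The plan is to derive both the tree structure and the height bound from the single structural constraint in Claim~\ref{clm:H-comp-invariants}$(i)$, which says that every H-comp has at most one \emph{edge above} it. The translation into a statement about $T_\Gamma$ is immediate: every edge of $T_\Gamma$ arises from a vertical \textsc{Maker} edge joining two distinct H-comps --- horizontal \textsc{Maker} edges are internal to an H-comp by definition --- and any two adjacent H-comps in $T_\Gamma$ have distinct ranks, for otherwise the joining edge would be horizontal and the two H-comps would coincide. Hence Claim~\ref{clm:H-comp-invariants}$(i)$ says precisely that every H-comp has at most one neighbour in $T_\Gamma$ of strictly greater rank.

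To see that $T_\Gamma$ is a tree, connectivity is inherited from $\Gamma$, so it remains to prove acyclicity. Suppose for contradiction that $T_\Gamma$ contains a cycle (a pair of parallel edges counting as a cycle of length $2$) and let $C^{\star}$ be an H-comp of minimum rank on that cycle. Both of its cycle-neighbours then have strictly greater rank than $C^{\star}$, so both incident cycle edges are above $C^{\star}$, contradicting Claim~\ref{clm:H-comp-invariants}$(i)$.

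For the height bound, root $T_\Gamma$ at $C_0$ and fix any vertex $P_k$; let $C_0 = P_0, P_1, \ldots, P_k$ be the unique path from the root to $P_k$. I would prove by induction on $i$ that $\rho(P_0) > \rho(P_1) > \cdots > \rho(P_i)$. The base case $\rho(P_0) > \rho(P_1)$ holds because $\rho(P_0) = \rho(C_0)$ is maximal in $\Gamma$ while $\rho(P_1) \ne \rho(P_0)$. For the inductive step, if $\rho(P_{i+1}) > \rho(P_i)$ were to hold while already $\rho(P_{i-1}) > \rho(P_i)$ by the inductive hypothesis, then $P_i$ would have two higher-rank neighbours $P_{i-1}$ and $P_{i+1}$ in $T_\Gamma$, again contradicting Claim~\ref{clm:H-comp-invariants}$(i)$. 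Since the ranks along the path are strictly decreasing non-negative integers, we obtain $k \le \rho(P_0) - \rho(P_k) \le \rho(C_0)$, which is the required bound. I do not foresee any real obstacle; the only point needing care is the observation that adjacent H-comps in $T_\Gamma$ necessarily have distinct ranks, which is what upgrades each comparison to a strict inequality and allows the induction to go through.
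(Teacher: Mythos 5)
Your proposal is correct, and it fills in exactly the argument the paper leaves implicit when it states Corollary~\ref{cor:T-gamma} without proof: the invariant from Claim~\ref{clm:H-comp-invariants} (at most one \textsc{Maker} edge above any H-comp, combining cases $(i)$ and $(ii)$) rules out cycles via a minimum-rank vertex and forces ranks to decrease strictly along every root-to-leaf path, giving the height bound $\rho\left(C_{0}\right)$. No gaps; the only cosmetic point is that the ``at most one edge above'' property comes from the claim as a whole rather than from case $(i)$ alone.
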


At this point we can already establish a weaker version of Theorem~\ref{thm:breaker-win}.
Indeed, using the above corollaries, Lemma~\ref{lem:volume-of-balls-in-Gnp}
and Theorem~\ref{thm:stabilization-time}, we obtain the following
poly-logarithmic bound on the size of \textsc{Maker}'s connected components.
\begin{prop}
\label{prop:breaker-strategy-is-ok}For $c<c_{b+2}$, $\ggnc$ is
WHP such that by playing according to $\mathcal{S}_{B}$, \textsc{Breaker}
ensures that all connected components in \textsc{Maker}'s graph are
of size $O\left(\log^{4b+5}n\right)$.
\end{prop}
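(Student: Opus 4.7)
The plan is to combine Corollaries~\ref{cor:H-comp-size} and~\ref{cor:T-gamma} (bounding the local and global structure of \textsc{Maker}'s graph) with Theorem~\ref{thm:stabilization-time} (bounding the depth of the peeling process) and Lemma~\ref{lem:volume-of-balls-in-Gnp} (bounding the volume of balls in $G$). Since $c<c_{b+2}$, Theorem~\ref{thm:stabilization-time} supplies WHP a constant $C=C(c)$ with $T^{*}\le\log_{b+1}\log n+C$; in particular $\rho(v)\le T^{*}$ for every vertex $v$, so for every connected component $\Gamma$ of \textsc{Maker}'s graph the contracted tree $T_{\Gamma}$ from Corollary~\ref{cor:T-gamma} has height at most $T^{*}$.

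Fix such a $\Gamma$, let $C_{0}$ be its root H-comp, and pick any $v_{0}\in V(C_{0})$. I claim every $v\in V(\Gamma)$ lies within distance $t:=2(b+1)(T^{*}+1)=O(\log\log n)$ of $v_{0}$ in $G$. Indeed, a path in $\Gamma$ from $v_{0}$ to $v$ can be built by descending $T_{\Gamma}$ one H-comp at a time: each H-comp along the descent has at most $2(b+1)$ vertices by Corollary~\ref{cor:H-comp-size}, hence diameter at most $2(b+1)-1$, and between consecutive H-comps we cross a single vertical edge; summing over the at most $T^{*}+1$ H-comps traversed yields the bound. Thus $V(\Gamma)$ is contained in the $t$-ball around $v_{0}$ in $G$, and for $c>1$ Lemma~\ref{lem:volume-of-balls-in-Gnp} bounds its size WHP by $2t^{3}c^{t}\log n$ (the sub-critical range $c\le 1$ is easier and can be handled either directly or via an analogous simpler volume estimate).

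It remains to verify the explicit exponent $4b+5$. Substituting $T^{*}\le\log_{b+1}\log n+O(1)$ gives $c^{t}=O\bigl((\log n)^{\gamma}\bigr)$ where $\gamma:=2(b+1)\log c/\log(b+1)$. The key quantitative input is the inequality $c_{b+2}<(b+1)^{2}$, equivalently $\gamma<4b+4$; this follows from the asymptotic $c_{b+2}=(b+2)+O\bigl(\sqrt{(b+2)\log(b+2)}\bigr)$ quoted in Section~\ref{sec:our-results} and is readily verified for small $b$ from the listed values of $c_{3},c_{4},c_{5},c_{6}$. Since $t^{3}=O(\log^{3}\log n)$ contributes only a $(\log n)^{o(1)}$ factor, we conclude $|V(\Gamma)|\le 2t^{3}c^{t}\log n=O(\log^{4b+5}n)$, as required. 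The only non-routine step is this exponent check; the rest of the argument is a direct assembly of the cited structural results.
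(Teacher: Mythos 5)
Your proposal is correct and follows essentially the same route as the paper: bound the height of the contracted tree $T_{\Gamma}$ by the stabilization time $T^{*}\le\log_{b+1}\log n+O(1)$ (Theorem~\ref{thm:stabilization-time}), use Corollaries~\ref{cor:H-comp-size} and~\ref{cor:T-gamma} to place $\Gamma$ in a ball of radius $O\left((b+1)T^{*}\right)$, and apply Lemma~\ref{lem:volume-of-balls-in-Gnp}. The only cosmetic difference is the exponent check: you use $c_{b+2}<(b+1)^{2}$ justified somewhat loosely via the asymptotic formula plus the four listed values, whereas the paper uses the elementary (and stronger, since $2(b+1)\le(b+1)^{2}$ for $b\ge1$) bound $c_{b+2}<2(b+1)$, which would make your verification airtight for every $b$.
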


\begin{proof}
Fix a connected component $\Gamma$ in \textsc{Maker}'s graph. The
height of $T_{\Gamma}$ from Corollary~\ref{cor:T-gamma} is bounded
by the stabilization time, and we have WHP $T^*= T^*\left(n,c\right)\le\log_{b+1}\log n+O\left(1\right)$
by Theorem~\ref{thm:stabilization-time}. Moreover, the distance
between two vertices in the same H-comp is at most $2b+1$ by Corollary~\ref{cor:H-comp-size},
so $\Gamma$ is contained in a ball of radius $t=2\left(b+1\right) T^*$
around any vertex $v\in V\left(\Gamma\right)$ of maximal rank. Applying
Lemma~\ref{lem:volume-of-balls-in-Gnp} yields the bound
\[
2t^{3}c^{t}\log n=O\left(\log^{3}\log n\cdot c^{2\left(b+1\right)\log_{b+1}\log n}\cdot\log n\right)=O\left(\log^{1+2\left(b+1\right)\log_{b+1}c}n\cdot\log^{3}\log n\right).
\]
Note that $c<c_{b+2}<2\left(b+1\right)$ and thus
\[
1+2\left(b+1\right)\log_{b+1}c<1+2\left(b+1\right)\left(1+\log_{b+1}2\right)\le4b+5,
\]
establishing the bound on the size of $C$.
\end{proof}
Proposition~\ref{prop:breaker-strategy-is-ok} gives a nonuniform
poly-logarithmic bound (i.e., the exponent depends on $b$, even when
computed precisely). Using Theorem~\ref{thm:shatter-time} and Claim~\ref{clm:vertex-expansion},
we improve the bound to a uniform $o\left(\log^{3}n\right)$ for any
value of $b$, and establish Theorem~\ref{thm:breaker-win}, via
the following proposition:
\begin{prop}
\label{prop:breaker-strategy-is-awesome}For $c<c_{b+2}$, $\ggnc$
is WHP such that if \textsc{Breaker} plays according to $\mathcal{S}_{B}$,
all connected components in \textsc{Maker}'s graph are of size $o\left(\log^{3}n\right)$.
\end{prop}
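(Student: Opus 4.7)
The plan is to upgrade Proposition~\ref{prop:breaker-strategy-is-ok} by replacing the stabilization-time bound of Theorem~\ref{thm:stabilization-time} with the constant shattering time $t^\dagger$ of Theorem~\ref{thm:shatter-time}, paying the remaining polylog factor through a short expansion argument rather than a volume-of-balls blow-up of radius $\Theta(\log\log n)$. Condition on the WHP events of Theorem~\ref{thm:shatter-time}, Claim~\ref{clm:vertex-expansion}, and Lemma~\ref{lem:volume-of-balls-in-Gnp}. Fix a connected component $\Gamma$ of \textsc{Maker}'s graph and let $T_\Gamma$ be its contracted tree from Corollary~\ref{cor:T-gamma}, rooted at the maximum-rank H-comp $C_0$ of rank $\rho_0$. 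Note that along any path from a leaf of $T_\Gamma$ up to $C_0$ the ranks strictly increase, since an edge of $T_\Gamma$ above an H-comp $C'$ joins $C'$ to an H-comp of strictly higher rank.

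If $\rho_0 \leq t^\dagger$, then $T_\Gamma$ has height at most $t^\dagger = O(1)$ and each H-comp has at most $2(b+1)$ vertices by Corollary~\ref{cor:H-comp-size}, so $\Gamma$ lies in a $G$-ball of constant radius around any vertex of $C_0$ and Lemma~\ref{lem:volume-of-balls-in-Gnp} gives $|V(\Gamma)| = O(\log n)$. If instead $\rho_0 > t^\dagger$, the key structural claim is that the subgraph of $\Gamma$ spanned by all H-comps of rank at least $t^\dagger$, together with the vertical edges between them, forms a connected subgraph of $G_{t^\dagger}$. Indeed, a horizontal edge inside an H-comp $C$ with $\rho(C) \geq t^\dagger$ survives in $G_{\rho(C)} \supseteq G_{t^\dagger}$, and a vertical edge whose lower-rank endpoint has rank at least $t^\dagger$ also survives in $G_{t^\dagger}$. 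Moreover, the strict rank-monotonicity forces the least common ancestor in $T_\Gamma$ of any two rank-$\geq t^\dagger$ H-comps to itself have rank $\geq t^\dagger$, so the entire $T_\Gamma$-path between them stays in rank $\geq t^\dagger$. It follows that all such H-comps lie in a single connected component $D$ of $G_{t^\dagger}$, which by Theorem~\ref{thm:shatter-time} satisfies $|D| = o(\log^3 n)$.

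The remaining H-comps of $\Gamma$, those of rank $<t^\dagger$, hang off $D \cap \Gamma$ as subtrees of $T_\Gamma$ of height at most $t^\dagger$, so every vertex of $\Gamma$ is within $G$-distance $r := 2(b+1)t^\dagger = O(1)$ of $D$. It remains to bound the size of the $r$-neighbourhood of $D$ in $G$. When $|D| \geq \log n$, iterating Claim~\ref{clm:vertex-expansion} (applicable since $D$ remains connected after each expansion step) yields $(1+3c)^r |D| = O(|D|) = o(\log^3 n)$. When $|D| < \log n$, Lemma~\ref{lem:volume-of-balls-in-Gnp} applied vertex-by-vertex gives $|D|\cdot O(\log n) = o(\log^2 n)$. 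Either way, $|V(\Gamma)| = o(\log^3 n)$. The main obstacle I anticipate is the rank-monotonicity bookkeeping that pins the high-rank part of $\Gamma$ inside a single shattered component $D$; the subsequent expansion-versus-volume dichotomy is routine.
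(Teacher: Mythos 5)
Your proof is correct and takes essentially the same route as the paper's: the rank-$\ge t^\dagger$ part of $\Gamma$ is connected (by the rank monotonicity along $T_\Gamma$) and hence lies in a single component of $G_{t^\dagger}$ of size $o(\log^3 n)$ by Theorem~\ref{thm:shatter-time}, every other vertex of $\Gamma$ is within constant $G$-distance $2(b+1)t^\dagger$ of it, and constantly many applications of Claim~\ref{clm:vertex-expansion} (with Lemma~\ref{lem:volume-of-balls-in-Gnp} covering the small cases, where the paper instead pads the high-rank set to a minimal connected set of size at least $\log n$ so that Claim~\ref{clm:vertex-expansion} alone suffices) give the bound. One cosmetic slip: since $\rho(C)\ge t^\dagger$ and the peeling sequence is decreasing, the correct inclusion is $G_{\rho(C)}\subseteq G_{t^\dagger}$ rather than the reverse, though the conclusion you actually use --- that such edges survive in $G_{t^\dagger}$ --- is exactly right.
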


\begin{proof}
Fix a connected component $\Gamma$ in \textsc{Maker}'s graph. If
$\left|V\left(\Gamma\right)\right|<\log n$, we are done; otherwise,
let $U'\subseteq V\left(\Gamma\right)$ be the set of vertices of
rank at least $t^{\dagger}$, where $t^{\dagger}=t^{\dagger}\left(c\right)$
is the constant from Theorem~\ref{thm:shatter-time}, and let $U$
be the vertex set of a connected $\Gamma'\subseteq\Gamma$ of minimal
size such that $U'\subseteq U$ and $\left|U\right|\ge\log n$. By
Theorem~\ref{thm:shatter-time}, $\left|U\right|=o\left(\log^{3}n\right)$.
Furthermore, $\Gamma$ is contained in a ball of radius $t=2\left(b+1\right)t^{\dagger}$
around $U$, thus $t$ applications of Claim~\ref{clm:vertex-expansion}
yield the desired bound
\[
\left|V\left(\Gamma\right)\right|\le\left(3c\right)^{t}\left|U\right|=O\left(\left|U\right|\right)=o\left(\log^{3}n\right).\qedhere
\]
\end{proof}
\begin{rem*}
A slightly refined strategy by \textsc{Breaker} would be to claim
edges from $F_{H}\left(C\right)$ before edges from $F_{V}\left(C\right)$
if $\left|F_{V}\left(C\right)\right|>b$. It is not very hard to see
that by this he makes sure that throughout the game all H-comps except
the ``root'' ones are of size at most two. This improves the exponent
in Proposition~\ref{prop:breaker-strategy-is-ok} to $3+\log_{b+1}4$,
which approaches~$3$ from above as~$b$ increases. However, for
all $b\ge1$ this is still inferior to the bound given in Proposition~\ref{prop:breaker-strategy-is-awesome}
(which does not benefit from the modified strategy). We thus chose
to present the simpler strategy.
\end{rem*}

\section{\label{sec:maker}Her Side}

It is quite easy to prove a weaker version of Theorem~\ref{thm:maker-win}:
that WHP $s_{b}^{*}\left(\gnc\right)=\Omega\left(n\right)$
when $c>c_{b+3}$. Indeed, in this case $\mathcal{K}'$, the $\left(b+3\right)$-core
of $\gnc$, is WHP of linear size and a very
good edge expander. It follows that the naive tree-building strategy
within $\mathcal{K}'$ (i.e., the strategy that builds a single tree
$T\subset\mathcal{K}'$ by repeatedly claiming an arbitrary free edge
of $\partial T$, starting from an arbitrary vertex of $\mathcal{K}'$)
is successful for \textsc{Maker}, since as long as $T$ is sublinear in size, i.e., has $o(n)$ vertices, we have
$\left|\partial T\right|>b\left|V\left(T\right)\right|$ and \textsc{Breaker}
cannot claim all the boundary of $T$. This is the same strategy \textsc{Maker}
uses in~\cite{HN-14} when playing on a random $\left(b+3\right)$-regular
graph.

For $c_{b+2}<c<c_{b+3}$, however, the above strategy applied to~$\mathcal{K}$, the $\left(b+2\right)$-core of $\ggnc$, is prone to
being shut down by \textsc{Breaker} in the early stages of the game
despite the minimum degree in $\mathcal{K}$ being $b+2$. Thus in the proof of Theorem~\ref{thm:maker-win},
\textsc{Maker} uses a tree-building strategy that begins in a more
refined manner. In the tree $T\subset\mathcal{K}$ \textsc{Maker}
builds, she makes sure to have many heavy vertices (i.e., of degree
at least $b+3$ in $\K$). Only after securing sufficiently many such
vertices does she continue by growing $T$ naively.

\medskip{}

 Recall that for a rooted tree, a non-leaf vertex is $k$-light if it has less
than $k$ children, and it is $k$-heavy otherwise, and that a tree path (a descending path towards the leaves) is $k$-light if it consists solely of $k$-light vertices.
Note that since leaves are not considered light, any tree path terminating in a leaf is not $k$-light by definition (although permitting leaves to be light would require only a trivial modification of our arguments).
\medskip

For simplicity, in the remainder of the paper  $k$  stands for $b+2$, and $\K$ is the $k$-core of $G$.
 The proof of Theorem~\ref{thm:maker-win} contains three main ingredients, stated in three lemmas.  The first two lemmas require the following definition.
\begin{defn*}
An $\left(N,L\right)$-tree is a balanced tree (i.e., all of its leaves are at the same level) of height $N$ with no $\left(k-1\right)$-light vertices and no $k$-light tree paths of length $L$.
\end{defn*}

\begin{lem}
\label{lem:accumulate-heavy}In the $\left(1:k-2\right)$ \textsc{Maker\textendash Breaker}
game played on the edge set of an $\left(N,L\right)$-tree $T^*$,
\textsc{Maker} has a strategy to build a subtree $T\subset T^*$
with at least $\alpha^{N}$ vertices that are $k$-heavy
in $T^*$, for some constant $\alpha=\alpha\left(k,L\right)>1$.
Moreover, using this strategy \textsc{Maker}'s graph is a tree throughout
the game, until $T$ is achieved.
\end{lem}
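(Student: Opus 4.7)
The plan is to give an explicit depth-first tree-growing strategy for \textsc{Maker} and analyse it with a local budget argument, forcing a genuine binary branching at some $k$-heavy vertex in every bounded-length segment of every root-to-leaf path of the built tree. Write $m(v) := 2$ if $v$ is $k$-heavy in $T^*$ and $m(v) := 1$ if $v$ is only $k$-light (the $(k-1)$-light case is forbidden, so every non-leaf has at least $k-1$ children in $T^*$); the \emph{quota} at $v$ is to claim $m(v)$ of $v$'s child-edges. The subtree $T$ will consist of exactly those $v$'s (with exactly the $m(v)$ child-edges \textsc{Maker} has claimed there).

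\textsc{Maker}'s strategy: maintain a stack of \emph{active} vertices of $T$ whose quota has not yet been met, initialised to the root of $T^*$. Each round, play at the deepest active vertex $v$, claiming a free child-edge of $v$; once $v$'s quota is met, push its newly claimed children onto the stack and descend. Layer a reactive override on top: if some shallower active vertex $v'$ is one \textsc{Breaker}-move away from having fewer free child-edges left than it still needs to finish its quota, play at $v'$ instead. When there is a choice of child at a given vertex, prefer one in whose subtree \textsc{Breaker} has so far preempted the least (broken arbitrarily). Since every move extends the existing tree by exactly one edge, \textsc{Maker}'s graph is a tree throughout, handling the ``moreover'' clause.

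The main quantitative claim is that, for a suitable constant $c = c(k)$, along every root-to-leaf path of $T$ and every block of $L + c$ consecutive levels, there is at least one $k$-heavy vertex at which \textsc{Maker} achieves both quota-children --- a \emph{genuine binary branching}. By the $(N,L)$-property each such block contains at least one $k$-heavy vertex, so the content of the claim is that \textsc{Breaker} cannot force failure at \emph{every} $k$-heavy vertex of a single block. A failure at a $k$-heavy vertex of minimum degree $k$ requires \textsc{Breaker} to claim at least $k-1$ of that vertex's child-edges (to cut its free count below $2$ before \textsc{Maker}'s second play there); but over the $\le 2(L+c)$ rounds that \textsc{Maker} devotes to a block (at most $m(v) \le 2$ rounds per vertex, secured by the override rule), \textsc{Breaker} has only $2(L+c)(k-2)$ moves available locally. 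Choosing $c$ so that $2(L+c)(k-2) < (k-1) \cdot (\text{number of $k$-heavy vertices per block})$ forces at least one success per block. One successful branching per $L+c$ levels then propagates through $T$, yielding at least $2^{\lfloor N/(L+c)\rfloor}$ root-to-leaf paths in $T$ at level $N$, hence at least $\alpha^N$ $k$-heavy vertices in $T$ for any constant $\alpha < 2^{1/(L+c)}$.

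The main obstacle is making the reactive override together with the low-preemption-child rule precise enough that the per-block budget accounting is honest. \textsc{Breaker}'s \emph{global} budget grows with the size of $T$ and could in principle be concentrated on any single block; the saving grace is that \textsc{Maker}'s ability to pick an essentially unpreempted child at every branching distributes \textsc{Breaker}'s attacks across $T$'s many branches, so that no single block sees more than the claimed local budget. To justify this, one needs to maintain through every round an invariant of the form ``every active vertex still has enough free child-edges remaining to complete its quota before \textsc{Maker} is scheduled to return to it, plus a constant safety margin'', and to verify that the override never inflates the per-block processing time past $2(L+c)$ rounds. This bookkeeping is where the real work lies.
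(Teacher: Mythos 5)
There is a genuine gap: the per-block budget inequality you propose cannot be satisfied for any $k\ge 3$ and any constant $c$. In a block of $L+c$ consecutive levels along a single root-to-leaf path there are at most $L+c$ vertices, hence at most $L+c$ that are $k$-heavy. The best possible right-hand side of your inequality is therefore $(k-1)(L+c)$, while your left-hand side is $2(L+c)(k-2)$. Since $2(k-2)\ge k-1$ for every $k\ge 3$ (with equality only at $k=3$), you get $2(L+c)(k-2)\ge (k-1)(L+c)$ regardless of $c$: Breaker's local budget always suffices, even in the most favourable case, to spend $k-1$ steps on every heavy vertex of the block. Increasing $c$ scales both sides proportionally and cannot help. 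So the core quantitative claim of your proof cannot be established, and the difficulty you flagged (preventing Breaker from concentrating his global budget on one block) is actually not the binding constraint --- even the idealised local accounting already fails.

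The structural reason the arithmetic is off is that your quota strategy, by claiming only $m(v)\le 2$ child-edges per vertex, discards the crucial source of Maker's advantage: every non-leaf of $T^*$ has at least $k-1$ children, so each edge Maker claims opens at least $k-2$ new boundary edges, exactly offsetting Breaker's per-round budget of $k-2$; heavy vertices supply a strict surplus of $+1$. The paper's strategy exploits this by claiming \emph{all} free boundary edges level-by-level and tracking the global identity ``(number of free boundary edges) $=$ (number of heavy vertices reached) $+1$'' (their Claim~\ref{clm:FreeEdgesNum}). This global surplus is what makes a $1/C$ fraction of branches survive over $L+1$ levels with $C=1+k^{L+1}$, a much weaker (but achievable) survival rate than the ``one guaranteed branching per $L+c$ levels along each path'' you are after. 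If you wish to salvage a DFS/quota approach, you would have to augment the quota so that Maker's plays regenerate boundary at rate $\ge k-2$, and you would need to settle for a survival \emph{fraction} per $O(L)$ levels rather than a per-path guarantee --- at which point you are essentially reconstructing the paper's argument.

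Separately, the ``moreover'' clause (Maker's graph is a tree throughout) is fine under your construction, and the final step from one branching per $\Theta(L)$ levels to $\alpha^N$ heavy vertices is the right shape, but it rests entirely on the flawed budget claim.
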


We assume the hypothesis of Theorem~\ref{thm:maker-win}, i.e., $c>c_k$, in the following two lemmas.

\begin{lem}
\label{lem:good-tree}  $\mathcal{K}$ contains WHP an $\left(N,L\right)$-tree
for $N=\log^{2}\log n$ and some constant $L=L\left(k,c\right)$.
\end{lem}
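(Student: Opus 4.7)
The plan is to construct the required $(N, L)$-tree rooted at a well-chosen vertex $v_0 \in \mathcal{K}$ by pruning its BFS-exploration tree within $\mathcal{K}$ to depth $N$.

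First I would invoke the standard structural facts about the $k$-core above threshold (e.g.~\cite{PSW-96, JL-07}): for $c > c_k$, WHP $\mathcal{K}$ has linear size and its empirical degree distribution is asymptotically a truncated Poisson $Z_k(\mu)$ for the appropriate parameter $\mu = \mu(c,k)$. The relevant consequence is that a positive constant fraction $q > 0$ of vertices of $\mathcal{K}$ have degree at least $k+1$. Next, I would fix a candidate root $v_0 \in \mathcal{K}$ and look at its $N$-neighborhood. Since $N = \log^{2}\log n \ll \log n$, Claim~\ref{clm:no-cycles} gives that this neighborhood is cycle-free with probability $1 - o(1)$, so its restriction to $\mathcal{K}$ is a tree. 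The minimum-degree condition $\delta(\mathcal{K}) \ge k$ then forces every non-root vertex in this rooted BFS-tree to have at least $k-1$ children, so the no-$(k-1)$-light condition of an $(N, L)$-tree is satisfied automatically.

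The main step is to prune the BFS-tree into a balanced subtree of the same height with no $k$-light tree path of length $L$. I would couple the BFS-exploration of $\mathcal{K}$ from $v_0$ with a Galton--Watson process whose offspring distribution $X$ puts positive mass both on $k-1$ and on $\{k,k+1,\dots\}$; writing $p_0 := \Pr[X = k-1] \in (0,1)$, a vertex is $k$-light precisely with probability $p_0$, independently across vertices. To handle long $k$-light paths, I would track the \emph{type} of each node as the length $s \in \{0,1,\dots,L-1\}$ of the maximal $k$-light path ending there, and consider the multi-type branching process restricted to feasible configurations (those avoiding types $\ge L$). For $L = L(k,c)$ large enough the projected offspring matrix is supercritical: the probability $p_0^L$ of $L$ consecutive light steps is exponentially small in $L$, while the offspring mean is a fixed constant $\ge k-1 \ge 2$. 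A standard survival argument then yields that this pruned branching process reaches depth $N$ with probability bounded below by a constant $\eta = \eta(k,c) > 0$. Applying this to a linear family of vertices of $\mathcal{K}$ with pairwise-disjoint $N$-neighborhoods, WHP at least one of them succeeds and yields the desired $(N, L)$-tree.

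The main obstacle I anticipate is making the Galton--Watson coupling rigorous inside $\mathcal{K}$: because the $k$-core is produced by the peeling process, the conditional law of unexplored edges is not of product form, so the BFS-exploration is not literally a branching process on the nose. The natural remedy is to replace $\ggnc$ by an explicit configuration-model / ``Poisson-cloning''-type description of the core (as in~\cite{CW-06, JL-07}), in which the exploration is literally a Galton--Watson process with the claimed offspring distribution, up to a negligible error at depth $N = \log^{2}\log n$; one then transports the conclusion back to $\ggnc$ via contiguity of the models.
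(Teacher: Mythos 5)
Your high-level plan is the same as the paper's: explore the $k$-core from a root, assign a ``type'' to each vertex recording the length of the maximal $k$-light path ending there, and prune the exploration tree of vertices whose type reaches $L$. The paper also does this via the configuration model for the degree sequence of $\mathcal{K}$ (rather than a contiguity argument, but that is a matter of bookkeeping), with types assigned in a DFS exploration of the $2N$-neighbourhood. The main difference of structure is cosmetic: you propose to amplify over $\Theta(n)$ roots with disjoint neighbourhoods, while the paper explores a single $v_0$ to depth $2N$ and uses the $\ge(k-1)^N = \omega(1)$ vertices at level $N$ as candidate roots of the desired $(N,L)$-tree.

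However, there is a genuine gap at the heart of your argument, in the step ``the probability $p_0^L$ of $L$ consecutive light steps is exponentially small in $L$, while the offspring mean is a fixed constant $\ge k-1$, hence the pruned process is supercritical for $L$ large.'' The quantity $p_0^L$ controls the probability that a \emph{fixed} path of length $L$ is all-light, but there are up to $(k-1)^L$ descending paths in the $(k-1)$-ary tree below a light vertex, so the expected number of $k$-light paths of length $L$ is on the order of $\big((k-1)p_0\big)^L$. If $(k-1)p_0 \ge 1$, this does not decay, the set of ``type $L$'' vertices is not asymptotically small as $L\to\infty$, and the pruned branching process need not be supercritical. Moreover, $p_0$ here is the \emph{size-biased} probability of being light, namely $\Pr[Z_{k-1}(\mu_c)=k-1]$; at the critical point $c=c_k$ one has exactly $(k-1)\Pr[Z_{k-1}(\mu_{c_k})=k-1]=1$, and only the strict inequality $c>c_k$ gives $(k-1)p_0<1$. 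This is precisely the content of the paper's Lemma~\ref{lem:technical}, which furnishes the constant $\delta>0$ with $(k-1)\Pr[Z_{k-1}(\mu_c)=k-1]<1-2\delta$, and it is the quantitative engine behind the inductive bounds $P_{i,h}\le(1-\delta)^i$ and $P_{L,h}\le\delta/\mu_c$ in Claim~\ref{clm:pruning}. Your proposal treats the subcriticality of the light-path process as automatic; it is not, and without an estimate of the form of Lemma~\ref{lem:technical} the argument does not close.

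A secondary, more minor point: you invoke Claim~\ref{clm:no-cycles} for a fixed $v_0$ (which gives probability $o(1)$ of a cycle in its $t$-neighbourhood) and then later amplify over $\Theta(n)$ roots; a naive union bound over these roots does not work with an $o(1)$ bound. The paper sidesteps this by working directly in the configuration model and bounding the probability of ever encountering a back-edge during a single exploration of polylogarithmically many vertices, which is $o(1)$ as needed. You would need a similar care, either by restricting to a sublinear but still $\omega(1)$ family of roots, or by adopting the paper's single-root, depth-$2N$ approach.
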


\begin{lem}
\label{lem:no-bad-two-cores}There exists some $\eps=\eps\left(k,c\right)>0$
such that WHP any subgraph $C\subset\mathcal{K}$ with minimum degree
at least~$2$, $\xs\left(C\right)\ge\log^{4}n$ and $\left|\partial C\right|\le (k-2)\left|V\left(C\right)\right|$
satisfies $\left|V\left(C\right)\right|\ge\eps n$.
\end{lem}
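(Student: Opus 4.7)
The plan is a first-moment union bound over vertex subsets of $G = \gnc$. Any subgraph $C \subset \mathcal{K}$ satisfying the hypotheses yields a set $A := V(C) \subseteq V(G)$ of size $s := |V(C)|$ such that (i) $|E(G[A])| \ge |E(C)| = s + \mathrm{exc}(C) \ge s + L$ with $L := \log^4 n$, since $E(C) \subseteq E(\mathcal{K}[A]) = E(G[A])$ (every $G$-edge between two $V(\mathcal{K})$-vertices lies in $\mathcal{K}$); and (ii) $|\partial A| = |\partial C| \le (k-2)s$. It therefore suffices to show that WHP no such $A$ with $s < \varepsilon n$ exists.

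For a fixed $A$ of size $s$, the internal edge count $X := |E(G[A])|$ and boundary edge count $Y := |\partial A|$ are \emph{independent} binomials distributed as $\mathrm{Bin}(\binom{s}{2}, c/n)$ and $\mathrm{Bin}(s(n-s), c/n)$. Hence the expected number of bad sets of size $s$ is at most
\[
\mu(s) := \binom{n}{s}\,\Pr[X \ge s+L]\,\Pr[Y \le (k-2)s],
\]
and I aim to show $\sum_{s=1}^{\varepsilon n} \mu(s) = o(1)$ for a suitable $\varepsilon = \varepsilon(k,c)$. Since $c > c_k > k - 2$, the ratio $(k-2)/c$ is bounded below $1$, so a Chernoff lower-tail bound yields $\Pr[Y \le (k-2)s] \le e^{-\nu s}$ for some constant $\nu = \nu(k,c,\varepsilon) > 0$, uniformly for $s \le \varepsilon n$ when $\varepsilon$ is small enough. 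For the excess condition, the standard Poisson-type upper-tail bound gives $\Pr[X \ge s+L] \le (e s^2 c / (2(s+L) n))^{s+L}$.

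I would analyse $\mu(s)$ in three regimes. For \emph{small} $s$, say $s \le \sqrt{nL}$, the mean $\mathbb{E}[X]$ is of order $L$ or less, so the excess tail is far into its upper tail; a direct computation shows $\mu(s) = n^{-\omega(1)}$, comfortably summable. For \emph{large} $s$, say $s \ge \varepsilon_1 n$ with $\varepsilon_1 = \varepsilon_1(\nu)$ chosen so that $\log(e/\varepsilon_1) < \nu$, the boundary tail alone dominates the entropy factor: $\binom{n}{s} e^{-\nu s} \le e^{-\Omega(s)}$, summing to $o(1)$. The \emph{intermediate} range $\sqrt{nL} < s < \varepsilon_1 n$ is the crux: here neither tail bound alone can absorb the $\binom{n}{s}$ factor, and the two must be combined. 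The polylogarithmic excess threshold $L = \log^4 n$ is chosen precisely so that the extra factor it contributes, combined with the linear decay $e^{-\nu s}$ from the boundary, beats $(en/s)^s$ uniformly in this range; verifying this reduces to showing that the three competing exponents $s\log(en/s)$, $(s+L)\log(es^2c/(2(s+L)n))$ and $-\nu s$ sum to a sufficiently negative quantity, by a calculus argument on the function $s \mapsto \log \mu(s)$.

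The main obstacle is this intermediate regime: there one must simultaneously exploit the suppression of dense induced subgraphs (via $X \ge s+L$) and of small cuts (via $Y \le (k-2)s$), since each bound in isolation is too weak to overcome the $\binom{n}{s}$ entropy cost. The final constant $\varepsilon$ emerges from optimising the interplay between the Chernoff rate $\nu$ and the upper-tail regime of $X$, and is taken small enough so that all three regimes close up uniformly.
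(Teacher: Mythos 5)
There are two genuine problems here. The first is in the reduction itself: the hypothesis of the lemma bounds $\left|\partial C\right|$, the boundary of $C$ taken \emph{inside the core} $\K$ (this is how it arises in the proof of Theorem~\ref{thm:maker-win}, via $2|E(F)|=\sum_{v\in V(F)}\deg_{\K}(v)-|\partial F|$), whereas your condition (ii) bounds the boundary of $A=V(C)$ in the whole graph $G$. A core vertex typically has a constant fraction of its $G$-neighbours outside $\K$, so $\left|\partial_{G}A\right|$ is in general much larger than $\left|\partial_{\K}C\right|$, and the bad event of the lemma does not imply the event you union-bound; your family of events simply does not cover the bad configurations.

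The second, more fundamental, problem is quantitative: even for the idealised event you do bound, the first moment over vertex subsets of $\gnc$ does not tend to zero when $c$ is close to $c_{k}$, precisely in the regime $s=\alpha n$ with $\alpha>0$ a small constant. Per vertex, the entropy term contributes $1+\log(1/\alpha)$, the internal-edge tail $\prob\left[\Bin\left(\binom{s}{2},c/n\right)\ge s+L\right]\le\left(ec\alpha/2\right)^{s}$ contributes $1+\log(c\alpha/2)$ (the excess threshold $L=\log^{4}n$ is negligible at this scale), and the two $\log\alpha$ terms cancel, leaving the constant $2+\log(c/2)$; the boundary tail contributes at best $-\left(c-(k-2)+(k-2)\log\frac{k-2}{c}\right)$. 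For $k=3$ and $c$ slightly above $c_{3}\approx3.351$ this totals about $2.52-1.14>0$ (similarly for larger $k$ near $c_{k}$), so $\mu(s)$ is exponentially \emph{large} for linear $s$, and shrinking $\eps$ does not help since this exponent is independent of $\alpha$. So the ``calculus argument'' you defer to cannot succeed near the threshold. This is exactly why the paper does not argue directly in $\gnp$: it conditions on the (proper) degree sequence of $\K$, works in the configuration model, and exploits that the boundary condition forces almost every vertex of $C$ to have core-degree exactly $k$, combined with Lemma~\ref{lem:technical} (that $\Pr\left[Z_{k-1}(\mu_{c})=k-1\right]<(1-2\delta)/(k-1)$), to make the expected count decay like $(1-\delta/4)^{t}$; only the dense case $\xs(C)\ge\delta_{1}\left|V(C)\right|$ is handled by a direct $\gnp$ subgraph count of the kind you propose.
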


Lemma~\ref{lem:accumulate-heavy} is proved in Section~\ref{sec:accumulate-heavy}, while the proofs of Lemmas~\ref{lem:good-tree} and~\ref{lem:no-bad-two-cores}
are postponed to Sections~\ref{sec:good-tree} and~\ref{sec:no-bad-two-cores}, respectively.

Next we show how these lemmas imply Theorem~\ref{thm:maker-win}, but first we need the following easy claim.
\begin{claim}
\label{clm:2-core}Let $F$ be an induced subgraph of $\mathcal{K}$
with non-negative excess such that $\left|\partial F\right|\le b\left|V\left(F\right)\right|$,
and let $C$ be the $2$-core of $F$ (note that $C$ is not empty since $\xs\left(F\right)\ge0$ and thus
$F$ contains a cycle). Then $\xs\left(C\right)\ge\xs\left(F\right)$ and $\left|\partial C\right|\le b\left|V\left(C\right)\right|$.
\end{claim}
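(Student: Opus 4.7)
The plan is to construct $C$ from $F$ by iteratively removing vertices of degree at most~$1$ and to show that both inequalities survive each individual removal. Formally, let $F = F_0 \supsetneq F_1 \supsetneq \cdots \supsetneq F_s = C$ be a sequence of induced subgraphs of $\mathcal{K}$ in which $F_{i+1}$ is obtained from $F_i$ by deleting a single vertex $v_i$ with $\deg_{F_i}(v_i) \le 1$; such a sequence exists since this is exactly the $2$-peeling of $F$. It suffices to verify (a) $\xs(F_{i+1}) \ge \xs(F_i)$ and (b) if $|\partial F_i| \le b|V(F_i)|$, then $|\partial F_{i+1}| \le b|V(F_{i+1})|$, and then iterate across the $s$ steps.

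Part (a) is immediate from a vertex-and-edge count: removing an isolated vertex drops one vertex but no edges, so $\xs$ grows by~$1$, whereas removing a leaf drops one of each, leaving $\xs$ unchanged. The main calculation is (b). The key input is that $v_i \in V(\mathcal{K})$ forces $\deg_G(v_i) \ge \deg_{\mathcal{K}}(v_i) \ge k = b+2$. Deleting $v_i$ removes from $\partial F_i$ the $\deg_G(v_i) - \deg_{F_i}(v_i)$ edges from $v_i$ to $V(G)\setminus V(F_i)$ and contributes the $\deg_{F_i}(v_i)$ edges of $v_i$ inside $F_i$ (which now join $V(F_{i+1})$ to the newly external vertex $v_i$). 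Therefore
\[
|\partial F_{i+1}| = |\partial F_i| - \deg_G(v_i) + 2\deg_{F_i}(v_i) \le |\partial F_i| - (b+2) + 2 = |\partial F_i| - b,
\]
which, combined with $b|V(F_{i+1})| = b|V(F_i)| - b$, closes the induction step.

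The only point requiring care is that each intermediate $F_i$ remains an induced subgraph of $\mathcal{K}$, so that the minimum-degree bound $\deg_{\mathcal{K}}(v_i) \ge k$ is available at every step; this is automatic, since peeling simply deletes a vertex together with its incident edges, leaving $F_i = \mathcal{K}[V(F_i)]$ throughout. Beyond the boundary bookkeeping above I anticipate no real obstacle: the claim essentially reduces to recognising both quantities as invariants of the $2$-peeling process.
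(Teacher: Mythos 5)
Your proof is correct and follows essentially the same route as the paper's: sequential vertex-deletion, showing both quantities are monotone under each single deletion. One small point worth tightening: the boundary $\partial F$ here is taken with respect to $\mathcal{K}$ (not the full random graph $G$), so the exact bookkeeping identity should read $|\partial F_{i+1}| = |\partial F_i| - \deg_{\mathcal{K}}(v_i) + 2\deg_{F_i}(v_i)$; writing $\deg_G(v_i)$ in that identity is not literally correct (edges of $G$ incident with $v_i$ but outside $\mathcal{K}$ play no role), though since $\deg_{\mathcal{K}}(v_i) \ge k = b+2$ is exactly the bound you invoke, the final estimate $|\partial F_{i+1}| \le |\partial F_i| - b$ is right and matches the paper.
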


\begin{proof}
In order to obtain $C$ from $F$ we use a sequential vertex-deleting
variant of the $2$-peeling process. Starting from $F_{0}=F$, as
long as the minimum degree of $F_{t}$ is less than two, we obtain
$F_{t+1}$ from $F_{t}$ by deleting one arbitrary vertex $v\in V\left(F_{t}\right)$
of degree $\deg_{F_{t}}\left(v\right)\le1$. Regardless of the order
of deletions, this process terminates with the 2-core $C=F_{ T^*}$.
We prove that $\xs\left(F_{t}\right)\ge\xs\left(F\right)$ and $\left|\partial F_{t}\right|\le b\left|V\left(F_{t}\right)\right|$
for all $t\ge0$ by induction.

For $t=0$ there is nothing to prove; assuming it holds for $t$,
let $v\in V\left(F_{t}\right)$ be the vertex selected for deletion,
whose degree in $F_{t}$ is $0$ or $1$. In $F_{t+1}$ there is one
less vertex (i.e., $v$) and at most one less edge than $F_{t}$,
hence $\xs\left(F_{t+1}\right)\ge\xs\left(F_{t}\right)\ge\xs\left(F\right)$;
furthermore
\[
\partial F_{t+1}=\partial F_{t}\cup E\left(v,V\left(F_{t+1}\right)\right)\setminus E\left(v,V\left(\mathcal{K}\setminus F_{t}\right)\right)
\]
so
\[
\left|\partial F_{t+1}\right|=\left|\partial F_{t}\right|+\deg_{F_{t}}\left(v\right)-\left(\deg_{\mathcal{K}}\left(v\right)-\deg_{F_{t}}\left(v\right)\right)\le b\left|V\left(F_{t}\right)\right|+1-\left(b+1\right)=b\left|V\left(F_{t+1}\right)\right|.\qedhere
\]
\end{proof}
\begin{proof}[Proof of Theorem~\ref{thm:maker-win}]
By Lemma~\ref{lem:good-tree} \textsc{Maker} can WHP locate a $\left(\log^{2}\log n,L\right)$-tree
$T^*$ in $\mathcal{K}$. She first builds a subtree $T\subset T^*$
with at least $\alpha^{\log^{2}\log n}\gg2\log^{4}n$ vertices of
degree at least $b+3$ in $\K$, which she can do by Lemma~\ref{lem:accumulate-heavy}. She then proceeds naively by claiming in every move an arbitrary
free edge in $\partial T$ as long as possible. \textsc{Maker} can
no longer proceed with her strategy only when \textsc{Breaker} has
claimed the entire boundary of $T$, and in particular $\left|\partial T\right|\le b\left|V\left(T\right)\right|$.
When  this happens, consider the subgraph $F\subset\mathcal{K}$ induced by $V\left(T\right)$.
Clearly $V\left(F\right)=V\left(T\right)$ and thus $\partial F=\partial T$.
Now $F$ satisfies
\begin{align*}
  2\left|E\left(F\right)\right|  =\sum_{v\in V\left(F\right)}\deg_{F}\left(v\right) & =\sum_{v\in V\left(F\right)}\deg_{\mathcal{K}}\left(v\right)-\left|\partial F\right| \\
  & \ge \left(b+2\right)\left|V\left(T\right)\right|+2\log^{4}n-b\left|V\left(T\right)\right| = 2\left(\left|V\left(F\right)\right|+\log^{4}n\right),
\end{align*}
meaning $\xs\left(F\right)\ge\log^{4}n$. By Claim~\ref{clm:2-core}
the 2-core $C$ of $F$ has large excess and small boundary, namely
$\xs\left(C\right)\ge\xs\left(F\right)\ge\log^{4}n$ and $\left|\partial C\right|\le b\left|V\left(C\right)\right|$.
Finally, by Lemma~\ref{lem:no-bad-two-cores}, WHP $C$ has at least
$\eps n$ vertices and thus
\[
\left|V\left(T\right)\right|=\left|V\left(F\right)\right|\ge\left|V\left(C\right)\right|\ge\eps n,
\]
establishing the theorem.
\end{proof}

\subsection{\label{sec:accumulate-heavy}Proof of Lemma~\ref{lem:accumulate-heavy}:
accumulating heavy vertices}

 An $\left(N,L\right)$-tree is \emph{simple} if its root is $k$-light, and all $k$-heavy vertices in the tree have exactly $k$ children. Clearly, any $\left(N,L\right)$-tree contains a simple $\left(N,L\right)$-tree as a subtree, so we may assume without loss of generality that $T^*$ is simple. Note that $w$, the root of $T^*$, has degree $k-1$, every other $k$-light vertex but the leaves has degree $k$, and all $k$-heavy vertices of $T^*$, to which we refer from now on simply as heavy, have degree $k+1$. In this subsection $l(v)$ denotes the level of any vertex $v\in V( T^*)$; in particular, $l(w)=0$ and $l(v)=N$ for every leaf $v$. Whenever we refer to an edge $uv\in E( T^*)$ we assume $l(u)<l(v)$, and define the level of~$uv$ to be~$l(v)$.

We now describe \textsc{Maker}'s strategy $\mathcal{S}_{\textsc{M}}$. Throughout the game, \textsc{Maker}'s graph is a single tree $T=T^{(r)}\subset T^*$, where $T^{(r)}$ denotes her tree after the $r$th round. Initially, $T^{\left(0\right)}$ consists of $w$ solely. In
each move, as long as there exist free edges of level smaller than $N$ in $\partial T$, \textsc{Maker} enlarges her tree by claiming one of these edges, selecting an arbitrary edge of minimum level. The game stops when \textsc{Maker} cannot proceed with this strategy, i.e., when all edges in $\partial T$ have already been claimed by \textsc{Breaker} (note that by definition none of them could have been previously claimed by \textsc{Maker}), except perhaps for some edges in level $N$.

Given $\mathcal{S}_{\textsc{M}}$, we may assume that \textsc{Breaker} only claims edges from $\partial T$ as well. Indeed, suppose that \textsc{Breaker}, according to his strategy, wishes to claim an edge $uv \not\in \partial T$ in one of his steps. He can claim instead the (unique) edge $u'v' \in  \partial T$ such that $v'$ is an ancestor of $v$. By $\mathcal{S}_{\textsc{M}}$, this prevents \textsc{Maker} from claiming any edge in the subtree of $T^*$ rooted at $v'$, and particularly the edge $uv$. If the edge $u'v'$ was already claimed by \textsc{Breaker}, then he claims an arbitrary free edge from $\partial T$ (if none of those exists the game ends anyway). It is evident that \textsc{Breaker} cannot be harmed from this modification of his strategy, and our assumption is justified.

We now assume that \textsc{Maker} plays second and follows $\mathcal{S}_{\textsc{M}}$ (it is trivial to see that she can do so), and show that it is a winning strategy for her, that is, we show that when the game is stopped, her tree $T$ contains sufficiently many heavy vertices. Before doing so we need some additional terminology. First, since both players only claim edges from $\partial T$ by assumption, we naturally redefine free edges to be unclaimed edges from $\partial T$ (instead of all unclaimed edges in $T^*$) for the remainder of this subsection. For $j=1,2,\ldots,N-1$, level $j$ is \emph{complete}
when no free edges remain in level $j$ or less. An edge $uv\in E(T)$ \emph{survives} in level $j$ for $j\ge l\left(v\right)$ if $T$ contains a level $j$ descendant of $v$. Finally, we define $C=C(k,L)=1+k^{L+1}$.

\begin{claim}
\label{clm:FreeEdgesNum} \emph{For $r\ge0$ let $h_{r}$ denote the
number of heavy vertices in $T^{(r)}$, and for $r\ge1$
let $f_{r}$ denote the number of free edges before }\textsc{Maker}\emph{'s
$r$th move. Then $f_{r+1} = h_{r}+1$ for every $r\ge0$.}
\end{claim}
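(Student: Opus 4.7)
The plan is to prove the identity by induction on $r$, tracking the net change to the free-edge count and the heavy-vertex count across each round (Breaker's move followed by Maker's move, since Breaker plays first here).

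For the base case $r=0$, the tree $T^{(0)}=\{w\}$ consists only of the $k$-light root, so $\partial T^{(0)}$ has exactly $k-1$ edges, all of them unclaimed. Breaker's first move, which claims $b=k-2$ of them, leaves $f_1 = 1$ free edge, in agreement with $h_0 + 1 = 0 + 1 = 1$.

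For the inductive step, I would analyze one full round. Maker's $r$th move claims one free edge and attaches a new vertex $v_r$ to her tree; since $\mathcal{S}_{\textsc{M}}$ forbids choosing level-$N$ edges, $v_r$ lies at level at most $N-1$ and is therefore a non-leaf of $T^*$, so $v_r$ has exactly $k-1$ children in $T^*$ when light and $k$ children when heavy. All of these child edges become \emph{fresh} boundary edges of $T^{(r)}$: by the WLOG restriction of Breaker's moves to the current $\partial T$, together with the fact that $v_r\notin T^{(s)}$ for every $s<r$, none of them could have been claimed earlier. Hence Maker's move alters the free-edge count by $-1 + (k-1) + \mathbf{1}[v_r \text{ heavy}] = (k-2) + \mathbf{1}[v_r \text{ heavy}]$, which in particular guarantees at least $k-2$ free edges are available for Breaker's $(r+1)$th move. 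After Breaker claims his full quota of $k-2$ edges, the net change across the round is $f_{r+1} - f_r = \mathbf{1}[v_r \text{ heavy}]$. Since $h_r - h_{r-1}$ equals the same indicator, we obtain $f_{r+1} - h_r = f_r - h_{r-1}$, and the induction closes at the constant value $f_1 - h_0 = 1$.

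The argument is essentially bookkeeping; the only subtlety is the freshness of the child edges of $v_r$ immediately after Maker adds him, which relies on the WLOG restriction of Breaker's play to the current boundary, together with the observation that $T$ grows only by adding descendants of existing vertices (never skipping to a non-adjacent subtree), so no child edge of $v_r$ was previously on the boundary and hence eligible for Breaker to claim.
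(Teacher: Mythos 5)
Your proof is correct and amounts to essentially the same counting argument as the paper's: the paper computes $\left|\partial T^{(r)}\right|$ in closed form via the degree sum over the simple tree $T^*$ and subtracts the $(r+1)(k-2)$ edges claimed by \textsc{Breaker}, while you carry out the identical bookkeeping incrementally, round by round, as a telescoping induction. Both arguments rest on the same facts (simplicity of $T^*$, \textsc{Maker} never claiming a level-$N$ edge, and the WLOG restriction of \textsc{Breaker} to $\partial T$), and your added checks (freshness of the child edges of $v_r$, availability of $k-2$ free edges for \textsc{Breaker}) are correct details the paper leaves implicit.
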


\begin{proof}
Since $T^*$ is simple, and since \textsc{Maker} never claims an edge in level $N$, it follows
that
\[
\left|\partial T^{(r)}\right|=\sum_{v\in V\left(T^{(r)}\right)}\deg_{ T^*}(v)-2\left|E\left(T^{(r)}\right)\right|=(k-1)+rk+h_{r}-2r.
\]
In each of his first $r+1$ moves \textsc{Breaker} claims $b=k-2$
edges, each of them incident with some $v \in V(T)$, so $f_{r+1} = \left|\partial T^{(r)}\right|-(r+1)(k-2)=h_{r}+1$.
\end{proof}
An immediate corollary of Claim~\ref{clm:FreeEdgesNum} is that at
the end of the game $T$ is of height $N-1$, and level $N-1$ is complete. Indeed, since there exist free edges before each of \textsc{Maker}'s moves, the game stops only when all free edges are in level $N$. Having a free edge in level $N$ means that $T$ must have reached level $N-1$.
\begin{claim}
\label{cl:constant} Let $s$ and $j<N-L$
be two integers, and assume that there are~$s$ free edges in level~$j$ right before \textsc{Maker} claims her first edge
in level~$j$. Then at least~$s/C$ of them will be claimed by \textsc{Maker} and survive in level~$j+L$.
\end{claim}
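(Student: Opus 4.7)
The plan is to show that (i) the supply of free edges, measured level by level, is non-decreasing as phases proceed from $j$ to $j+L$; (ii) consequently, Maker claims many level-$(j+L)$ edges; and (iii) by a pigeonhole on branching, these edges distribute into enough distinct level-$j$ subtrees.

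Denote by $s_{j'}$ the number of free edges at level $j'$ right before Maker's first move there, and by $m_{j'}$ (respectively $h_{j'}$) the number of edges Maker claims (respectively the number of $k$-heavy vertices she adds) during \emph{phase $j'$}, i.e.\ the stretch of rounds in which her min-level strategy targets level $j'$. During phase $j'-1$, Maker's $m_{j'-1}$ newly added level-$(j'-1)$ vertices bring $(k-1)m_{j'-1}+h_{j'-1}$ new level-$j'$ edges into $\partial T$, while Breaker, who has $(k-2)m_{j'-1}$ moves in total during that phase, spends $s_{j'-1}-m_{j'-1}$ of them finishing off level $j'-1$ and the remaining $(k-1)m_{j'-1}-s_{j'-1}$ at level $j'$. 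Subtracting yields
\[ s_{j'} = s_{j'-1} + h_{j'-1} \ge s_{j'-1}, \]
and by induction $s_{j+L}\ge s$.

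For step (ii), observe that during phase $j+L$ no new level-$(j+L)$ edges appear in $\partial T$ (Maker's additions now produce edges at level $j+L+1$), and each round consumes at most $k-1$ of them (one by Maker, at most $k-2$ by Breaker). Since $j+L\le N-1$, Maker cannot run out of playable free edges below level $N$ until level $j+L$ is exhausted, so
\[ m_{j+L} \ge \frac{s_{j+L}}{k-1} \ge \frac{s}{k-1}. \]

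For step (iii), since $T$ is connected and rooted at $w$, every level-$(j+L)$ edge Maker claims lies in the subtree of $T^*$ rooted at a unique level-$j$ vertex $v$; moreover the incoming level-$j$ edge of $v$ must also have been claimed by Maker and is therefore one of the $s$ initial free edges. That subtree has at most $k^L$ vertices at level $j+L$ and hence contains at most $k^L$ of Maker's edges at that level. Writing $X$ for the number of distinct such $v$ (which is exactly the count the claim asks for), we obtain $X \ge m_{j+L}/k^L$, and
\[ X \ge \frac{s}{(k-1)k^L} = \frac{s}{k^{L+1}-k^L} > \frac{s}{1+k^{L+1}} = \frac{s}{C}. \]

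The main technical work is the bookkeeping in step (i); the remaining steps are essentially direct counting.
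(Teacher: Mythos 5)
Your argument is correct, but it follows a genuinely different route from the paper's. The paper argues locally: it fixes the $s$ subtrees of height $L$ hanging below the free level-$j$ edges, shows that in each subtree whose edge does not survive Breaker must have spent at least $(k-2)m_i+1$ steps (where $m_i$ is Maker's step count there), and compares this with Breaker's global budget $(k-2)\sum_i m_i$ over this stretch of the game; together with the crude bound $m_i<k^{L+1}/(k-2)$ this gives $|B|<|M|k^{L+1}$, hence $|M|>s/C$. You instead argue globally: your phase identity $s_{j'}=s_{j'-1}+h_{j'-1}$ is essentially the phase-boundary form of Claim~\ref{clm:FreeEdgesNum} (which the paper only invokes later, in Claim~\ref{cl:heavy}), from which you deduce $m_{j+L}\ge s/(k-1)$ and then pigeonhole Maker's level-$(j+L)$ edges over the at most $k^L$ such positions in each level-$j$ subtree of the simple tree $T^*$. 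Both proofs are valid given the surrounding reductions (simple $T^*$, Breaker confined to $\partial T$), and yours even yields the slightly stronger bound $s/\left((k-1)k^L\right)>s/C$. Two points deserve an explicit sentence in your write-up: in step (iii), the assertion that the edge entering $v$ is one of the $s$ initial free edges uses that no level-$j$ edge enters $\partial T$ after the reference moment (and that its parent cannot join $T$ later), so every level-$j$ edge Maker ever claims was already free at that moment; and in step (i) only the inequality $s_{j'}\ge s_{j'-1}$ is needed, which is robust to Breaker occasionally claiming fewer than $k-2$ relevant edges, whereas the stated equality requires the (harmless but unproved) assumption that he always spends all $k-2$ steps on free edges of $\partial T$ at levels $j'-1$ and $j'$.
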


\begin{proof}
Consider first the situation right before \textsc{Maker} claims her first edge in level $j$, and denote the $s$ free edges in this level by $u_{1}v_{1},\dots,u_{s}v_{s}$ (the
parents $u_{i}$ are not necessarily distinct, but the children $v_{i}$
are). For each $1\le i \le s$, let $T_{i}\subset T^*$ be the subtree rooted at~$u_{i}$
consisting of the edge $u_{i}v_{i}$ and the subtree of $T^*$
rooted at $v_{i}$ of height $L$. Note that all free edges are in level $j$ at this point by $\mathcal{S}_{\textsc{M}}$. By the strategies of \textsc{Maker} and \textsc{Breaker}, it follows that all edges in each $T_i$ are still unclaimed, and that exactly one of them, namely $u_iv_i$, is considered free in our new terminology.

Now let us examine the game when level $j+L$ is complete. Denote by $M\subseteq\left\{ 1,2,\ldots,s\right\} $
the set of indices of the edges that survived in level $j+L$ of $T$,
and by $B$ its complement, so $|M|+|B|=s$. We need to show that
$|M|>s/C$.

Let $m_{i}$ and $b_{i}$ denote the number of steps that were played
in $T_{i}$ by \textsc{Maker} and \textsc{Breaker}, respectively.
Recalling that $T^*$ is simple, for every $i=1,2,\ldots,s$, we have the trivial bound
\[
m_{i}\le|E(T_{i})|\le\sum_{t=0}^{L}k^{t}=\frac{k^{L+1}-1}{k-1} < \frac{k^{L+1}}{k-2}.
\]
Now let $i\in B$. Since \textsc{Maker} did not reach level $L$ in
$T_{i}$, i.e., did not reach any of its leaves, it follows by the assumption on \textsc{Breaker}'s strategy that every
step \textsc{Maker} made in $T_{i}$ increased the number of free edges in this
tree by at least $k-2$. Since no free edges remain in $T_{i}$, and
there was one free edge there at the beginning of the analysis, it
follows that $b_{i}\geq m_{i}(k-2)+1$.

During this analysis, which begins with \textsc{Maker}'s move, she
only plays in $T_{1},\ldots,T_{s}$. Thus $\sum_{i=1}^{s}b_{i}\leq(k-2)\sum_{i=1}^{s}m_{i}$.
The left hand side can be bounded from below by
\[
\sum_{i=1}^{s}b_{i}\geq\sum_{i\in B}b_{i}\geq\sum_{i\in B}\left(m_{i}(k-2)+1\right)=(k-2)\sum_{i\in B}m_{i}+|B|,
\]
while the right hand side can be bounded from above by
\[
(k-2)\sum_{i=1}^{s}m_{i}=(k-2)\left(\sum_{i\in B}m_{i}+\sum_{i\in M}m_{i}\right)<(k-2)\sum_{i\in B}m_{i}+|M|k{}^{L+1}.
\]
Putting it all together, we get $|B|<|M|k^{L+1}$, which implies $s=|M|+|B|<C|M|$,
thus the proof is complete.
\end{proof}
\begin{claim}
\label{cl:heavy} For every $0\le i\le\lfloor N/(C(L+1))\rfloor$,
when level $iC(L+1)$ is complete, $T$ contains at least $(2^{i}-1)C$
heavy vertices.
\end{claim}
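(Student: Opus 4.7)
The plan is to induct on $i$, with the base case $i=0$ being trivial: at the start of the game $T^{(0)} = \{w\}$ contains no $k$-heavy vertex, matching $(2^0-1)C = 0$.

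For the inductive step, suppose the claim holds at some $i \ge 0$. When level $iC(L+1)$ is complete, $\mathcal{S}_{\textsc{M}}$ (always claiming a minimum-level free edge) forces every free edge to sit at level $iC(L+1)+1$, and Claim~\ref{clm:FreeEdgesNum} gives their count as $h+1 \ge (2^i-1)C + 1$, where $h$ is the current heavy count of $T$. I then plan to apply Claim~\ref{cl:constant} a total of $C$ times in succession: the $t$-th application ($t=1,\dots,C$) begins right before \textsc{Maker}'s first move in level $j_t := iC(L+1) + (t-1)(L+1) + 1$ and ends when level $j_t + L = iC(L+1) + t(L+1)$ is complete, so at $t=C$ we have exactly reached the target level $(i+1)C(L+1)$. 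Denote by $s_t$ the number of free edges at the start of iteration $t$. Since the heavy count (and hence the free-edge count at any completion moment) is monotone non-decreasing, $s_t \ge s_1 \ge (2^i-1)C + 1$, and the strict inequality $|M| > s_t/C$ of Claim~\ref{cl:constant} combined with integrality of $|M|$ therefore yields at least $\lceil s_t/C \rceil \ge 2^i$ surviving edges in iteration $t$. Each survivor traces a length-$L$ tree-path inside the portion of $T$ newly built in iteration $t$, which by the $(N,L)$-tree property cannot be $k$-light, hence must contain at least one $k$-heavy vertex. Because the survivor subtrees within a single iteration are vertex-disjoint (their roots $v_i$ are distinct children in $T^*$) and successive iterations live in disjoint level-bands $[j_t, j_t+L]$, all of these heavy vertices are pairwise distinct from each other and from the $h$ heavy vertices accumulated before level $iC(L+1)$.

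Summing the $\ge 2^i$ freshly acquired heavy vertices across the $C$ iterations yields at least $2^i \cdot C$ new heavy vertices between the two checkpoints, pushing the total to $(2^i-1)C + 2^i C = (2^{i+1}-1)C$, which completes the induction. The main obstacle I expect is the passage from the strict inequality $|M| > s_t/C$ in Claim~\ref{cl:constant} to the effective integer bound $|M| \ge \lceil s_t/C \rceil \ge 2^i$; this step uses crucially that the inductive hypothesis delivers $s_t$ lying just above the multiple $(2^i-1)C$ of $C$, so that ceiling rounding produces the clean factor $2^i$ rather than a fractional loss. A secondary care point is verifying that new heavy vertices across the $C$ sub-iterations are never double-counted, but this is a routine consequence of the level-band decomposition noted above.
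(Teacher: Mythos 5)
Your proposal is correct and follows essentially the same route as the paper: induct on $i$, split the levels between $iC(L+1)$ and $(i+1)C(L+1)$ into $C$ consecutive bands of width $L+1$, apply Claim~\ref{cl:constant} once per band (using Claim~\ref{clm:FreeEdgesNum} and monotonicity of the heavy count to guarantee at least $(2^i-1)C+1$ free edges, hence at least $2^i$ survivors by integrality), and extract one heavy vertex from each of the resulting vertex-disjoint length-$L$ tree paths, with disjointness across bands coming from the level decomposition. The only cosmetic discrepancy is that you cite a strict inequality $|M|>s_t/C$ whereas the statement of Claim~\ref{cl:constant} asserts "at least $s/C$"; either version gives $|M|\ge 2^i$ here, so nothing is affected.
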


\begin{proof}
We prove by induction on $i$. The claim holds trivially for $i=0$.
Assuming it holds for $i$, we show that it holds for $i+1$ as well.
Let $0\le j<C$ and write $J_{j}^{i}=(iC+j)(L+1)$. By the induction
hypothesis and by Claim~\ref{clm:FreeEdgesNum}, right before \textsc{Maker}
claims her first edge in level $J_{j}^{i}+1>iC\left(L+1\right)$ there
are at least $(2^{i}-1)C+1$ free edges, all of them in level $J_{j}^{i}+1$
by $\mathcal{S}_{M}$. By Claim~\ref{cl:constant}, at least $2^{i}$
of these free edges will be claimed by \textsc{Maker} and survive
in level $J_{j+1}^{i}$, resulting in at least $2^{i}$ vertex disjoint
tree paths of length $L$ in $T$, each of them containing at least
one heavy vertex by the property of $T^*$. It follows that $T$
contains at least $2^{i}C$ heavy vertices between levels $J_{0}^{i}+1$
and $J_{0}^{i+1}$. By the induction hypothesis $T$ also contains
at least $(2^{i}-1)C$ heavy vertices until level $J_{0}^{i}$, and
the claim holds.
\end{proof}
The game ends when level $N-1$ is complete, and~$T$ then contains by Claim~\ref{cl:heavy}
at least $(2^{\lfloor (N-1)/(C(L+1))\rfloor}-1)C>\alpha^{N}$
heavy vertices for an appropriate $\alpha=\alpha\left(k,L\right)>1$,
establishing Lemma~\ref{lem:accumulate-heavy}.

\section{\label{sec:technical-background}Technical Background}

In this section we describe the technical background required for
Section~\ref{sec:technical-proofs}.

\subsection{The configuration model }

We begin this section with a description of the so-called \emph{configuration model}, introduced by Bollob{\'a}s~\cite{B-80}, which is extremely useful for generating random graphs with a given degree sequence.

Given two positive integers $n$ and $m$, fix an arbitrary degree sequence $\dd \in \D_{n,2m}$. Let $V = \{v_1,\dots,v_n\}$ be a set of vertices, where each $v_i$ is incident with $d_i$ labelled half-edges. Let $W$ denote the set of all half-edges, and let $F$ be a partition of $W$ into $m$ pairs; such a partition, which may also be viewed as a perfect matching of the half-edges, is called a \emph{configuration}. Note that there are exactly $(2m-1)!!$ configurations for $\dd$. By forming an edge from any two half-edges which belong to same pair in $F$, we obtain a multigraph $H = H(F)$ on the vertex set $V$, such that $d_H(v_i) = d_i$.

Let $\Omega^{*}(n,\dd) = \{H(F) \mid \textrm{$F$ is a configuration for $\dd$}\}$ be the set of all multigraphs on $n$ labelled vertices with degree sequence $\dd$, and let $\mathcal{G}^{*}(n,\dd)$ be the probability space of $\Omega^{*}(n,\dd)$ when $F$ is chosen uniformly at random from all possible configurations for $\dd$. Let $\Omega(n,\dd)$ be the set of all simple graphs in $\Omega^{*}(n,\dd)$, and let $\mathcal{G}(n,\dd)$ be the uniform distribution over $\Omega(n,\dd)$. We will make use of the following theorem, due to Frieze and Karo{\'n}ski~\cite{FK-15}.

\begin{thm}[{\cite[Theorem~10.3]{FK-15}}]\label{thm:prob-simple}
Let $\dd = (d_1,\dots,d_n)$ and assume that $\Delta(\dd) \le n^{1/6}$ and $\sum_{i=1}^{n} [d_i]_2 = \Omega(n)$. Then for any multigraph property $\P$
$$\Pr_{G \sim \mathcal{G}(n,\dd)}\left[G \in \P\right] \le (1+o(1))e^{\lambda(\lambda+1)}\cdot\Pr_{G^* \sim \mathcal{G}^*(n,\dd)}\left[G^* \in \P\right],$$
where
$\lambda = \lambda(\dd) = \frac12 \sum_{i=1}^{n}[d_i]_2/ \sum_{i=1}^{n}d_i$.
\end{thm}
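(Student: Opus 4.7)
The plan is to reduce the inequality to a lower bound on the probability that a random configuration yields a simple graph, and then compute that probability via moment methods applied to the counts of loops and parallel edges.

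First I would establish the standard conditioning identity. Any simple graph $H \in \Omega(n,\dd)$ arises from exactly $\prod_i d_i!$ configurations: independently at each vertex $v_i$, one bijects the $d_i$ labelled half-edges with the $d_i$ neighbours of $v_i$ in $H$, and these choices combine to determine a unique configuration whose underlying multigraph equals $H$. Since this count depends only on $\dd$, the conditional distribution of $G^* \sim \mathcal{G}^*(n,\dd)$ given that $G^*$ is simple is uniform on $\Omega(n,\dd)$, i.e.\ coincides with $\mathcal{G}(n,\dd)$. Consequently
\[
\Pr_{G \sim \mathcal{G}(n,\dd)}[G \in \P]
= \frac{\Pr[G^* \in \P \text{ and } G^* \text{ simple}]}{\Pr[G^* \text{ simple}]}
\le \frac{\Pr[G^* \in \P]}{\Pr[G^* \text{ simple}]},
\]
so the entire problem reduces to showing $\Pr[G^* \text{ simple}] \ge (1-o(1))\,e^{-\lambda(\lambda+1)}$.

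Second, I would study the joint distribution of $X$, the number of loops, and $Y$, the number of (ordered) pairs of parallel edges, in a random configuration. Since any fixed pair of half-edges is matched with probability $1/(2m-1)$, a short computation yields $\E X = \frac{1}{2(2m-1)}\sum_i [d_i]_2 = \lambda(1 + O(1/m))$; an analogous count over 4-tuples of half-edges distributed between two distinct vertices yields $\E Y = \lambda^2(1+o(1))$. More generally, I would evaluate the joint factorial moments $\E[(X)_r(Y)_s]$ by summing over ordered partial matchings of $2(r+2s)$ distinguished half-edges and factoring out each matched pair. Each contributes a factor $1/(2m-1)$, and the combinatorial prefactor of $\lambda^r \lambda^{2s}$ suffers only a relative correction from half-edges that happen to share a vertex; the hypothesis $\Delta(\dd) \le n^{1/6}$ makes this correction $O(\Delta^2/m) = O(n^{-2/3}) = o(1)$, while $\sum_i [d_i]_2 = \Omega(n)$ keeps $\lambda$ bounded away from $0$.

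Finally, applying inclusion-exclusion (with truncation error controlled by these factorial-moment estimates, uniformly in $\lambda$) gives
\[
\Pr[G^* \text{ simple}] = \Pr[X = 0,\, Y = 0] \ge (1-o(1))\,e^{-\lambda}\cdot e^{-\lambda^2} = (1-o(1))\,e^{-\lambda(\lambda+1)},
\]
which is the required bound. The principal technical obstacle is that $\Delta(\dd) \le n^{1/6}$ permits $\lambda$ to be as large as $\Theta(n^{1/3})$, so one cannot just appeal to the naive method of moments (which gives convergence at fixed $\lambda$); instead one needs a \emph{uniform} multiplicative estimate $\E[(X)_r(Y)_s] = (1 + o(1))\lambda^r\lambda^{2s}$ with error tame enough to survive the truncated inclusion-exclusion for $\Pr[X = Y = 0]$ across the whole admissible range of $\lambda$.
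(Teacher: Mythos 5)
First, note that the paper does not prove this statement at all: it is quoted verbatim from Frieze and Karo\'nski [FK-15], so there is no internal proof to compare against. Your opening reduction is correct and is the standard one: every simple graph with degree sequence $\dd$ arises from exactly $\prod_i d_i!$ configurations, so $\mathcal{G}^*(n,\dd)$ conditioned on simplicity is exactly $\mathcal{G}(n,\dd)$, and the theorem is equivalent to the lower bound $\Pr[G^*\text{ simple}]\ge(1-o(1))e^{-\lambda(\lambda+1)}$.

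The gap is in how you propose to prove that lower bound. The hypotheses $\Delta(\dd)\le n^{1/6}$ and $\sum_i[d_i]_2=\Omega(n)$ do not force $\lambda$ to be bounded: for instance about $n^{2/3}$ vertices of degree $n^{1/6}$ and the rest isolated gives $\lambda\sim n^{1/6}/2$ (incidentally $\lambda\le\Delta/2=O(n^{1/6})$, not $\Theta(n^{1/3})$ as you write, and the hypotheses only guarantee $m=\Omega(n^{5/6})$, not $m=\Omega(n)$, so your error term is $O(\Delta^2/m)=O(n^{-1/2})$). In the regime $\lambda\to\infty$ the target $e^{-\lambda(\lambda+1)}$ is exponentially small, and your key claim --- that a uniform multiplicative estimate $\E[(X)_r(Y)_s]=(1+o(1))\lambda^{r+2s}$ suffices to ``survive the truncated inclusion--exclusion'' --- is false. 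In the alternating expansion $\Pr[X=Y=0]=\sum_{r,s}\frac{(-1)^{r+s}}{r!\,s!}\E[(X)_r(Y)_s]$ the individual terms reach size roughly $e^{\lambda+\lambda^2}$, so a relative error $\epsilon$ in the factorial moments produces an absolute error of order $\epsilon\,e^{\lambda+\lambda^2}$; to conclude a $(1\pm o(1))$ estimate of a quantity of size $e^{-\lambda-\lambda^2}$ you would need $\epsilon=o\big(e^{-2\lambda(\lambda+1)}\big)$, i.e.\ super-exponentially small, whereas the overlap corrections your computation yields are only polynomially small. Truncation does not rescue this: to push the Bonferroni truncation error below $e^{-\lambda-\lambda^2}$ you must retain terms up to $r+s\gtrsim\lambda^2$ (of order $n^{1/3}$ when $\lambda\sim n^{1/6}$), and at that order the $(1+o(1))$ moment approximation itself breaks down. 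So your plan proves the theorem only when $\lambda=O(1)$ --- which happens to be all the paper ever uses, since it applies the result to proper degree sequences with $\lambda=\Theta(1)$ --- but not under the stated hypotheses. For the full statement one needs a genuinely different argument for the simplicity probability, e.g.\ McKay-type switchings giving $\Pr[\text{simple}]=\exp\big({-\lambda-\lambda^2+O(\Delta^4/m)}\big)$, where here $\Delta^4/m\le n^{2/3}/\Omega(n^{5/6})=o(1)$.
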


\subsection{\label{sec:branching-gnp}Exploring $\gnc$
via a Poisson branching process}

The main ingredient in the proof of Theorem~\ref{thm:shatter-time} is coupling local behavior
in $\gnc$ with an appropriate branching
process. We describe the coupling quickly, borrowing much of the notation
from~\cite{Rio-08}.
We also refer the reader to~\cite{CW-06,JL-07}. Recall that $\Psi_{j}\left(\lambda\right)$, $\Psi_{\ge j}\left(\lambda\right)$ and $\Psi_{<j}\left(\lambda\right)$ denote the probabilities of a $\Poisson(\lambda)$ random variable being equal to $j$, at least $j$, or less than $j$, respectively.

Let $X_{c}$ be a Galton\textendash Watson branching process that
starts with a single particle $x_{0}$ in generation zero, where the
number of children of each particle is an independent $\Poisson$$\left(c\right)$
random variable. Define a sequence $B_{0}\supseteq B_{1}\supseteq\cdots$
of events: for an integer $t\ge0$, let $B_{t}=B_{t}\left(c\right)$
be the event that $X_{c}$ contains a complete $\left(k-1\right)$-ary
tree of height $t$ rooted at $x_{0}$, and let $B=\bigcap_{t\ge0}B_{t}=\lim_{t\to\infty}B_{t}$
be the event that $X_{c}$ contains an infinite $\left(k-1\right)$-ary
tree rooted at $x_{0}$. Denote the probability of $B_{t}$ by $\beta_{t}$
and the probability of $B$ by $\beta=\lim_{t\to\infty}\beta_{t}$.
Then $\beta_{0}=1$. Also, each particle in the first generation of
$X_{c}$ has probability $\beta_{t}$ of having property $B_{t}$,
independently, so the number of such particles is distributed $\Poisson\left(c\beta_{t}\right)$.
Thus, $\beta_{t+1}=\Psi_{\ge k-1}\left(c\beta_{t}\right)$.

 Since $x\mapsto\Psi_{\ge k-1}\left(cx\right)$ is a continuous increasing
function, $\beta=\beta\left(c\right)$ is the maximum solution to
the equation $x=\Psi_{\ge k-1}\left(cx\right)$.  Recall $c_{k}$,
defined in Section~\ref{sec:intro} as the threshold for the appearance
of a nonempty $k$-core in $\gnc$; in terms
of the process $X_{c}$, we have
\begin{equation}
c_{k}=\inf\left\{ c:\beta\left(c\right)>0\right\} .\label{eq:ck-definition}
\end{equation}
\medskip{}

Given a graph $G$ with degree sequence  $(d_1,\dots,d_n)$, the \emph{degree histogram} of   $G$ is the sequence   $D=\left(D_{0},D_{1},\ldots\right)$  such that $D_j=|i:d_i=j|$ for all $j$.

Let us describe the likely degree histogram $D^{t}$ of the
graph $G_{t}$ obtained after $t$ steps of the peeling process. For
$t=0$ the binomial degree distribution of $G_{0}\sim\gnc$
is asymptotically $\Poisson\left(c\right)$. In particular, it is easy to see (e.g., via the second moment method) that WHP
\begin{equation}
D_{j}^{0}\left(n\right)=\begin{cases}
\left(1+o\left(1\right)\right)\Psi_{j}\left(c\right)n, & j= o\left(\log n/\log\log n\right);\\
\Theta\left(\Psi_{j}\left(c\right)n\right), & j= \Theta\left(\log n/\log\log n\right);\\
0, & j= \omega\left(\log n/\log\log n\right).
\end{cases}\label{eq:degree-distribution-gnc}
\end{equation}
Now, for integers $j\ge0$ and $t>0$ let
\begin{equation}
\delta_{j}^{t}=\Psi_{j}\left(c\beta_{t}\right)\Psi_{\ge k-j}\left(c\beta_{t-1}-c\beta_{t}\right).\label{eq:branching-process-degrees-after-t-steps}
\end{equation}
\begin{claim}
\label{clm:degree-distribution-after-t-steps}For fixed $t\ge 0$ and $j\ge 0$
we have $\E\left[D_{j}^{t}\left(n\right)\right]=\left(1+o\left(1\right)\right)\delta_{j}^{t}n$. Moreover, WHP
$D_{j}^{t}\left(n\right)= \left(1+o\left(1\right)\right)\delta_{j}^{t}n$.
\end{claim}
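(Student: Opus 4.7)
The plan is to compute $\E[D_j^t(n)] = n \cdot \Pr[\deg_{G_t}(v) = j]$ for a fixed vertex $v$ by locally coupling $\gnc$ to the Galton--Watson process $X_c$, and then to obtain concentration via Chebyshev's inequality. The crucial starting observation is that $\deg_{G_t}(v)$ is a local observable: whether an edge $uv$ belongs to $G_s$ depends only on the $s$-neighborhood of $\{u,v\}$, so $\deg_{G_t}(v)$ is determined by the subgraph induced on the $(t+1)$-neighborhood of $v$ in $G$.

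For the first moment, I would invoke Lemma~\ref{lem:volume-of-balls-in-Gnp} and Claim~\ref{clm:no-cycles} to see that WHP this $(t+1)$-neighborhood is a tree of size $O(\log n)$, and that a BFS from $v$ couples in total variation with the first $t+1$ generations of $X_c$---each newly revealed vertex contributes $\mathrm{Bin}(n-s, c/n) \to \Poisson(c)$ children, uniformly over the $O(\log n)$ vertices explored. On this tree I would unroll the peeling inductively: with $B_s(u)$ denoting the event that $u$ roots a complete $(k-1)$-ary subtree of depth $s$, induction on $s$ shows that, conditioned on $v$ having at least $k$ children with $B_{s-1}$ (equivalently, on $v$ surviving through round $s$), a child $u_i$ of $v$ satisfies $vu_i \in G_s$ iff $u_i$ has $B_s(u_i)$. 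Thus the event ``$v$ survives round $t$ and $\deg_{G_t}(v) = j$'' corresponds to $v$ having exactly $j$ children with $B_t$ and at least $k$ children with $B_{t-1}$. Applying Poisson thinning to the $\Poisson(c)$ children of $v$ splits them into independent $\Poisson(c\beta_t)$ and $\Poisson(c(\beta_{t-1}-\beta_t))$ counts (children with $B_t$ and with $B_{t-1}\setminus B_t$, respectively), yielding the joint probability $\Psi_j(c\beta_t)\Psi_{\ge k-j}(c(\beta_{t-1}-\beta_t)) = \delta_j^t$ as claimed; combining with linearity of expectation gives $\E[D_j^t(n)] = (1+o(1))\delta_j^t n$.

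For the concentration, let $X_v = \mathbf{1}\{\deg_{G_t}(v) = j\}$, so $D_j^t(n) = \sum_v X_v$ and $\var(D_j^t(n)) \le n + \sum_{u \ne v}\mathrm{Cov}(X_u, X_v)$. Classifying pairs $(u,v)$ by graph distance in $G$: if $u,v$ are at distance more than $2(t+1)$, their $(t+1)$-neighborhoods are disjoint and $X_u, X_v$ depend (WHP) on disjoint edge sets, contributing $o(1)$ covariance each; pairs at distance at most $2(t+1)$ number only $O(n \log n)$ WHP by Lemma~\ref{lem:volume-of-balls-in-Gnp}, each contributing $O(1)$. Thus $\var(D_j^t(n)) = o(n^2)$, and Chebyshev's inequality gives $D_j^t(n) = (1+o(1))\E[D_j^t(n)]$ WHP. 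The main obstacle is justifying rigorously that the peeling ``truncated'' to the local tree produces the same degree at $v$ as the global peeling---this reduces, via the locality observation, to a careful induction combining the Poisson convergence of offspring numbers with the WHP tree structure of the $(t+1)$-neighborhood.
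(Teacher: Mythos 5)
Your proposal follows essentially the same route as the paper: reduce to a single vertex $v$, use the locally tree-like structure (Claim~\ref{clm:no-cycles}) to couple the peeling around $v$ with the branching process $X_c$ and the events $B_t$, apply Poisson thinning to get $\Psi_j(c\beta_t)\Psi_{\ge k-j}(c\beta_{t-1}-c\beta_t)$, and then conclude by a second-moment/Chebyshev argument over pairs of vertices. Your write-up is in fact more explicit than the paper's (which compresses both the peeling-to-$B_t$ induction and the variance computation into a sentence each); the only cosmetic point is that in the variance bound you should control the \emph{expected} number of pairs within distance $2(t+1)$ (or condition on the WHP event) rather than quoting the WHP count from Lemma~\ref{lem:volume-of-balls-in-Gnp} directly.
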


\begin{proof}
We show that a vertex $v$ has degree $j$ in $G_{t}$ with probability
$\left(1+o\left(1\right)\right)\delta_{j}^{t}$. By Claim~\ref{clm:no-cycles}
we only need to consider the case where the local neighbourhood of
$v$ is a tree, i.e., behaves like the branching process $X_{c}$.

Consider first the case $j\ge k$, and note that here $\Psi_{\ge k-j}\left(c\beta_{t-1}-c\beta_{t}\right)=1$.
Each neighbour of $v$ survives $t$ steps of the $k$-peeling process
with probability $\beta_{t}$, independently, and thus the number
of such neighbours is distributed $\Poisson\left(c\beta_{t}\right)$.
Thus, the number of neighbours of $v$ in $G_{t}$ is exactly $j$
with probability $\left(1+o\left(1\right)\right)\Psi_{j}\left(c\beta_{t}\right)$.
For $0<j<k$, having $j$ neighbours is not sufficient for $v$ to
actually survive $t$ steps; we need $v$ to have at least $k$ neighbours
in $G_{t-1}$, of which exactly $j$ have survived $t$ steps. Thus,
the event $B_{t-1}\setminus B_{t}$, whose probability is $\beta_{t-1}-\beta_{t}$,
holds for at least $k-j$ neighbours of $v$. Again, the number of
such neighbours is distributed $\Poisson\left(c\beta_{t-1}-c\beta_{t}\right)$
and thus~$v$ has~$j$ neighbours in~$G_{t}$ and at least~$k-j$ neighbours
in $G_{t-1}\setminus G_{t}$ with probability
$$\left(1+o\left(1\right)\right)\Psi_{j}\left(c\beta_{t}\right)\Psi_{\ge k-j}\left(c\beta_{t-1}-c\beta_{t}\right)=\left(1+o\left(1\right)\right)\delta_{j}^{t}.$$

By linearity of expectation, we have $\E\left[D_{j}^{t}\left(n\right)\right]=\left(1+o\left(1\right)\right)\delta_{j}^{t}n$,
proving the first statement. Calculations similar to the above for
a pair of vertices $u,v$ show that $\var\left[D_{j}^{t}\left(n\right)\right]=o(\E\left[D_{j}^{t}\left(n\right)\right]^{2})$.
Since  $\E\left[D_{j}^{t}\left(n\right)\right]\to\infty$, this establishes the sharp concentration of $D_{j}^{t}\left(n\right)$.
\end{proof}
\begin{rem*}
For every vertex of degree $0$ in $G_t$ we have two options. If the vertex had degree at least $k$ in $G_{t-1}$, then the analysis is identical to that of the case $0<j<k$ in Claim~\ref{clm:degree-distribution-after-t-steps}. Otherwise, less than $k$ of its neighbours survived $t-1$ peeling steps. Thus, WHP
\begin{equation}\label{eq:degree-0-after-t-steps}
D_{0}^{t}\left(n\right)=\left(1+o\left(1\right)\right)\left(\delta_{0}^{t}+\Psi_{<k}\left(c\beta_{t-1}\right)\right)n.
\end{equation}
\end{rem*}

\subsection{\label{sec:new}Degree histogram of the $k$-core}

Considering the peeling process of Section~\ref{sec:branching-gnp},   the limit of the formula in Claim~\ref{clm:degree-distribution-after-t-steps} for large $t$ is suggestive of  the asymptotics  of the degree histogram of the $k$-core $\K$ of $\ggnc$, but it is difficult to make this approach rigorous.
We quote~\cite{CW-06} for a convenient source of the following two lemmas.

With $\beta$ defined as in Section~\ref{sec:branching-gnp}, we define $\mu_c=c\beta$. We  let $\nn$ denote the number   of vertices in $\K$, and $\hat m$ the number of edges.
\begin{lem}\label{CWnm}
The number   of vertices in $\K$   WHP satisfies
\begin{equation}\label{eq:number-of-vertices-in-k-core}
\hat{n} = \left(1+o\left(1\right)\right)\Psi_{\ge k}\left(\mu_{c}\right)n,
\end{equation}
and the number of edges in $\K$   WHP satisfies
\begin{equation}\label{eq:number-of-edges-in-k-core}
\hat{m} = \left(1+o\left(1\right)\right)\frac 12\mu_{c}\Psi_{\ge k-1}\left(\mu_{c}\right)n.
\end{equation}
\end{lem}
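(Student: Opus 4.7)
The plan is to use the limiting behaviour of the peeling-process analysis from Claim~\ref{clm:degree-distribution-after-t-steps}. Specifically, I would pass the formula $\delta_{j}^{t} = \Psi_{j}(c\beta_{t})\Psi_{\ge k-j}(c\beta_{t-1}-c\beta_{t})$ of~(\ref{eq:branching-process-degrees-after-t-steps}) to the limit $t\to\infty$: the monotone convergence $\beta_{t}\downarrow\beta$ forces $c\beta_{t-1}-c\beta_{t}\to 0$, so $\Psi_{\ge k-j}(c\beta_{t-1}-c\beta_{t})\to 0$ for every $j<k$ while it equals $1$ for every $j\ge k$. Hence no vertex of positive degree below $k$ should survive in the core, as expected, and for $j\ge k$ the degree-$j$ density in $\K$ should be $\Psi_{j}(\mu_{c})$. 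Summing over $j\ge k$ yields $\hat{n}/n \to \sum_{j\ge k}\Psi_{j}(\mu_{c}) = \Psi_{\ge k}(\mu_{c})$, which is exactly~(\ref{eq:number-of-vertices-in-k-core}).

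For the edge count I would write $2\hat{m} = \sum_{j\ge k} j D_{j}^{T^{*}}$, substitute the predicted densities, and use the Poisson identity from~(\ref{eq:poisson-obs}) in the form $j\Psi_{j}(\mu_{c}) = \mu_{c}\Psi_{j-1}(\mu_{c})$ to telescope the sum:
\begin{equation*}
\frac{2\hat{m}}{n} \;\longrightarrow\; \sum_{j\ge k} j\Psi_{j}(\mu_{c}) \;=\; \mu_{c}\sum_{j\ge k}\Psi_{j-1}(\mu_{c}) \;=\; \mu_{c}\Psi_{\ge k-1}(\mu_{c}),
\end{equation*}
recovering~(\ref{eq:number-of-edges-in-k-core}).

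The main obstacle is rigorizing this passage to the limit: Claim~\ref{clm:degree-distribution-after-t-steps} only yields WHP concentration for each \emph{fixed} $t$, whereas when $c>c_{k}$ the peeling takes $T^{*} = \Theta(\log n)$ iterations to terminate, and one cannot simply exchange the limits in $t$ and $n$. To close this gap I would adopt the differential-equation approach of Janson and \L uczak~\cite{JL-07} (or the original method of Pittel, Spencer and Wormald~\cite{PSW-96}): view the vertex-by-vertex variant of the peeling as a Markov chain on the vector of degree counts of the current subgraph (conveniently represented through the configuration model, as in Section~\ref{sec:technical-background}), show via Wormald's method that the rescaled trajectories concentrate near the solution of an explicit ODE system parameterised by the fraction of surviving half-edges, and read off the degree counts at the stopping point where the ``light'' half-edge mass vanishes. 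The fixed point is precisely $\Psi_{j}(\mu_{c})$ for $j\ge k$, which delivers the asserted concentration; since this analysis is carried out in full in~\cite{PSW-96,CW-06,JL-07}, for brevity I would simply quote Cain and Wormald, as the authors do.
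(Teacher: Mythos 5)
Your proposal is correct and, at the decisive step, coincides with the paper's: the paper gives no independent proof of Lemma~\ref{CWnm} but instead states it as a quoted consequence of Cain and Wormald~\cite{CW-06}, which is also what you ultimately do. The heuristic derivation you give from the $t\to\infty$ limit of $\delta_j^t$ in~\eqref{eq:branching-process-degrees-after-t-steps}, and the telescoping via $j\Psi_j(\mu_c)=\mu_c\Psi_{j-1}(\mu_c)$, are both correct, and you rightly flag that this limit-exchange is not rigorous because Claim~\ref{clm:degree-distribution-after-t-steps} holds only for fixed $t$ while the supercritical peeling runs for $\Theta(\log n)$ rounds; your proposed remedy (Wormald's ODE method as in~\cite{PSW-96,JL-07,CW-06}) is the standard way to close that gap. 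In short: same approach as the paper, plus a helpful but nonessential heuristic motivation.
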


\begin{lem}\label{CWdeg}
WHP the degree distribution in $\K$ is asymptotically that of $Z_{k}\left(\mu_{c}\right)$, i.e., the $k$-truncated
Poisson with parameter   $\mu_{c}$. Equivalently, the number of vertices of degree $j$ is 0 for $j<k$, whilst for fixed $j\ge k$ it is
\begin{equation}\label{degree-asy}
(1+o(1))\Pr[Z_{k}\left(\mu_{c}\right)=j]\hat n.
\end{equation}
\end{lem}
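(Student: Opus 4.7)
The statement is vacuous for $j<k$ since $\K$ has minimum degree $k$, so fix $j\ge k$ and write $N_j$ for the number of degree-$j$ vertices of $\K$. Combining with Lemma~\ref{CWnm}, it suffices to prove $N_j=(1+o(1))\Psi_j(\mu_c)n$ WHP, at which point dividing by $\hat n=(1+o(1))\Psi_{\ge k}(\mu_c)n$ yields the target
\[
\frac{N_j}{\hat n}=(1+o(1))\frac{\Psi_j(\mu_c)}{\Psi_{\ge k}(\mu_c)}=(1+o(1))\Pr[Z_k(\mu_c)=j].
\]

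The strategy is to extract $N_j$ from the intermediate degree histogram $D_j^t$ of Claim~\ref{clm:degree-distribution-after-t-steps}, by passing to the limit $t\to\infty$. For $j\ge k$ the factor $\Psi_{\ge k-j}(\cdot)$ in the definition of $\delta_j^t$ equals $1$, so the claim gives $D_j^t=(1+o(1))\Psi_j(c\beta_t)n$ WHP for any fixed $t$. Since $\beta_t\downarrow\beta$ monotonically (from $\beta_0=1$, $\beta_{t+1}=\Psi_{\ge k-1}(c\beta_t)$, and monotonicity of $\Psi_{\ge k-1}$) and $\Psi_j(cx)$ is continuous, I may fix, for any $\eps>0$, a constant $t_0=t_0(\eps,j)$ with $|\Psi_j(c\beta_{t_0})-\Psi_j(\mu_c)|<\eps/3$ and $\Psi_{\ge k}(c\beta_{t_0})-\Psi_{\ge k}(\mu_c)<\eps/3$. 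It then remains to show $|D_j^{t_0}-N_j|\le 2\eps n/3$ WHP.

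The symmetric difference between $\{v:\deg_{G_{t_0}}(v)=j\}$ and $\{v\in\K:\deg_\K(v)=j\}$ is contained in the union of two sets: $(i)$ non-core vertices of $G_{t_0}$-degree at least $k$, of total count $D_{\ge k}^{t_0}-\hat n$; and $(ii)$ \emph{unsettled} core vertices $U_{t_0}:=\{v\in\K:\deg_{G_{t_0}}(v)>\deg_\K(v)\}$, counted with multiplicity at most two. Summing Claim~\ref{clm:degree-distribution-after-t-steps} over $j'\ge k$ and subtracting \eqref{eq:number-of-vertices-in-k-core} bounds the size of $(i)$ by $\eps n/3+o(n)$ WHP.

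The main obstacle is bounding $|U_{t_0}|$. A core vertex $v$ is unsettled precisely when some neighbor $u\notin\K$ has not yet been peeled at time $t_0$. Under the Poisson branching process coupling of Section~\ref{sec:branching-gnp}, valid because the $t_0$-neighborhood of $v$ is a tree WHP by Claim~\ref{clm:no-cycles}, such a neighbor corresponds to a child of $x_0$ in $X_c$ whose descendant subtree satisfies $B_{t_0}\setminus B$, of probability $\beta_{t_0}-\beta$. Union-bounding over the $\Poisson(c)$ neighbors of $v$ gives $\E[|U_{t_0}|]\le c(\beta_{t_0}-\beta)n$, which can be made $<\eps n/12$ by enlarging $t_0$. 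Applying the same coupling to pairs of vertices yields $\var[|U_{t_0}|]=o(n^2)$, since pairs at distance $>2t_0$ contribute independently while the number of closer pairs is $o(n^2)$ by Lemma~\ref{lem:volume-of-balls-in-Gnp}; Chebyshev's inequality then gives $|U_{t_0}|<\eps n/6$ WHP. Combining the estimates yields $|N_j-\Psi_j(\mu_c)n|<\eps n$ WHP, and since $\eps>0$ was arbitrary, $N_j=(1+o(1))\Psi_j(\mu_c)n$ WHP, completing the proof.
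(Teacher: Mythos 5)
The paper's ``proof'' of Lemma~\ref{CWdeg} is a citation to~\cite{CW-06}, so your attempt is a genuinely different route: you try to derive the degree distribution of $\K$ as the $t\to\infty$ limit of the intermediate histogram $D_j^t$ from Claim~\ref{clm:degree-distribution-after-t-steps}. The high-level plan --- fix a large constant $t_0$, decompose the symmetric difference between $\{v:\deg_{G_{t_0}}(v)=j\}$ and $\{v\in\K:\deg_\K(v)=j\}$ into high-degree non-core vertices and ``unsettled'' core vertices, and bound each part --- is a sound and appealing idea.

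However, the step bounding $|U_{t_0}|$ is where the argument breaks down. You invoke the Poisson branching-process coupling of Section~\ref{sec:branching-gnp}, justified by Claim~\ref{clm:no-cycles}, to assign the unsettled neighbor $u$ the event $B_{t_0}\setminus B$ of probability $\beta_{t_0}-\beta$. But that coupling is valid only on a \emph{bounded}-radius neighborhood; $B$ is an infinite-depth event and cannot be read off from any finite exploration. More fundamentally, you are identifying ``$u\notin\K$'' with the branching-process event $\overline B$: this global-to-local equivalence is precisely what Pittel--Spencer--Wormald, Riordan and Cain--Wormald establish, and the paper intentionally outsources it by citing~\cite{CW-06}. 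So as written, that step either smuggles in the result being proved or rests on machinery you have not developed. The same objection applies to the claimed variance bound $\var[|U_{t_0}|]=o(n^2)$ via ``pairs at distance $>2t_0$ contribute independently.'' A secondary gap: you also implicitly sum Claim~\ref{clm:degree-distribution-after-t-steps} over all $j'\ge k$ (and would need $\sum_j j D_j^{t_0}$ for the edge count), but the claim's $o(1)$ error is per fixed $j$, so uniformity / tail control, say via Lemma~\ref{l:second_moment_degrees}, is needed and is not mentioned.

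There is a clean repair that stays within what the paper actually states. Since $\K$ is an induced subgraph of $G$ never touched by peeling, $E(\K)\subseteq E(G_{t_0})$, and every edge of $E(G_{t_0})\setminus E(\K)$ has at most one endpoint in $V(\K)$ (else it would lie in $\K$). Hence $|U_{t_0}|\le |E(G_{t_0})|-|E(\K)|$. The proof of Theorem~\ref{thm:shatter-time} already computes the WHP edge density of $D^{t_0}$ to be $\kappa_{t_0}=c\beta_{t_0}^2$ (using Claim~\ref{clm:degree-distribution-after-t-steps} together with Lemma~\ref{l:second_moment_degrees} for the tail), so WHP $|E(G_{t_0})|=(1+o(1))\tfrac12 c\beta_{t_0}^2 n$; and Lemma~\ref{CWnm} gives $|E(\K)|=\hat m=(1+o(1))\tfrac12\mu_c\Psi_{\ge k-1}(\mu_c)n=(1+o(1))\tfrac12 c\beta^2 n$, using $\Psi_{\ge k-1}(\mu_c)=\beta$. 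Thus WHP $|U_{t_0}|\le (1+o(1))\tfrac12 c(\beta_{t_0}^2-\beta^2)n$, which can be made smaller than any prescribed $\eps n$ by choosing $t_0$ large, with no need for a second-moment/Chebyshev step. With that replacement (and the tail control for the $D_{\ge k}^{t_0}$ sum), your overall strategy does yield a self-contained derivation of Lemma~\ref{CWdeg} from Claim~\ref{clm:degree-distribution-after-t-steps} and Lemma~\ref{CWnm}.
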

 We will also find an alternative expression for the average degree useful.  For every $\mu>0$ and $j\ge k$ we have
\begin{align*}
\frac{\Pr\left[Z_{k}\left(\mu\right)=j\right]}{\Pr\left[Z_{k-1}\left(\mu\right)=j-1\right]} & =\frac{\Psi_{j}\left(\mu\right)/\Psi_{\ge k}\left(\mu\right)}{\Psi_{j-1}\left(\mu\right)/\Psi_{\ge k-1}\left(\mu\right)}\\
 & =\frac{e^{-\mu}\mu^{j}/j!}{e^{-\mu}\mu^{j-1}/\left(j-1\right)!}\cdot\frac{\Psi_{\ge k-1}\left(\mu\right)}{\Psi_{\ge k}\left(\mu\right)}=\frac{\mu}{j}\cdot\frac{\Psi_{\ge k-1}\left(\mu\right)}{\Psi_{\ge k}\left(\mu\right)},
\end{align*}
so, denoting the average degree in $\K$ by $\hat{d}:=2\hat{m}/\hat{n}$, for every $j \ge k$ we have
\begin{equation}\label{eq:average-degree-in-k-core}
\hat{d}=\frac{2\hat{m}}{\hat{n}}=\left(1+o\left(1\right)\right)\frac{\mu_{c}\Psi_{\ge k-1}\left(\mu_{c}\right)n}{\Psi_{\ge k}\left(\mu_{c}\right)n}=\left(1+o\left(1\right)\right)\frac{j\Pr\left[Z_{k}\left(\mu_{c}\right)=j\right]}{\Pr\left[Z_{k-1}\left(\mu_{c}\right)=j-1\right]}.
\end{equation}

There is one more fact that we will need about the joint degree distribution of vertices in the $k$-core.
 Let $\gnm$ denote the Erd\H{o}s--R\'enyi random graph model, which is the uniform distribution over all graphs with $n$ vertices and $m$ edges.
For a fixed $k$, let $\K(n,m,k)$ be the probability space of graphs  with minimum degree at least  $k$, distributed as the $k$-core of  $\gnm$, and similarly define $\K(n,c,k)$ for  $\gnc$. When  defining these graphs, we only consider the vertices in the $k$-core, so the vertex numbering is compressed into the range $[1, \nn]$, where  $\nn \le n$ is the (random) number of vertices in the $k$-core, while maintaining their order from the original graph.

For the  $\K(n,m,k)$  case, a restatement of~\cite[Corollary~1]{CW-06} gives the following approximation to the distribution of  its  degree  sequence.  Let ${\cal M}(\nn,\hat m, k)$ denote the probability space of sequences $(M_1,\ldots, M_{\nn})$ with the multinomial distribution,   with parameters $\nn$ and $2\hat m$, but conditioned upon $M_i\ge k$ for all $i$.   Let ${\cal M}(\nn,\hat m)$ denote the probability space of sequences $(X_1,\ldots, X_{\nn})$ summing to $2\mm$ with the multinomial distribution, that is, for every $(d_1, \dots, d_{\nn}) \in \D_{\nn,2\mm}$, the probability that $X_i = d_i$ for all $i$ is $(2\mm)!/(\nn^{2\mm}\prod d_i!)$. Let ${\cal M}(\nn,\hat m, k)$ denote the same probability space, but conditioned upon $X_i\ge k$ for all $i$.
\begin{prop}\label{thm:maincorfirst}
Let $k\ge 3$ and $c>c_k$ be fixed, and   $m = (1+o(1))cn/2$. Let  $H_n$  be any event in
the probability space defined by the random vector distributed as the degree
sequence of $\K(n,m,k)$.
 Suppose that whenever $\hat n$ and $\hat m$ have the asymptotic behaviour given in~\eqref{eq:number-of-vertices-in-k-core} and~\eqref{eq:number-of-edges-in-k-core} respectively, it follows that
$$
\prob_{{\cal M}(\nn,\hat m, k)} (H_n) <P_n.
$$
Then $\prob(H_n)=O( P_n)$+o(1).
\end{prop}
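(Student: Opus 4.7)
The plan is to condition on the pair $(\nn, \mm)$ of vertex and edge counts of the $k$-core and then invoke the quoted~\cite[Corollary~1]{CW-06} in order to translate any bound on $H_n$ under the multinomial-type reference measure $\mathcal{M}(\nn_0, \mm_0, k)$ into a bound on the conditional probability $\prob(H_n \mid \nn = \nn_0, \mm = \mm_0)$ in the true model $\K(n,m,k)$. Once such a translation is available with a uniform constant, the proposition reduces to an application of total probability.

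Let $\mathcal{E}$ be the set of pairs $(\nn_0, \mm_0)$ simultaneously satisfying~\eqref{eq:number-of-vertices-in-k-core} and~\eqref{eq:number-of-edges-in-k-core}; by Lemma~\ref{CWnm}, $\prob((\nn,\mm) \in \mathcal{E}) = 1 - o(1)$. The core step is to show that~\cite[Corollary~1]{CW-06} yields a constant $K = K(c,k)$ such that for every $(\nn_0,\mm_0) \in \mathcal{E}$ and every event $A$ on degree sequences,
\[
\prob\bigl(\text{degree sequence of }\K(n,m,k) \in A \,\bigm|\, \nn = \nn_0,\ \mm = \mm_0\bigr) \le K \cdot \prob_{\mathcal{M}(\nn_0,\mm_0,k)}(A).
\]
Applying this with $A = H_n$ and using the hypothesis $\prob_{\mathcal{M}(\nn_0,\mm_0,k)}(H_n) < P_n$ (which holds precisely on $\mathcal{E}$), total probability gives
\[
\prob(H_n) \le \prob\bigl((\nn,\mm) \notin \mathcal{E}\bigr) + \max_{(\nn_0,\mm_0) \in \mathcal{E}} \prob\bigl(H_n \mid \nn = \nn_0, \mm = \mm_0\bigr) \le o(1) + K P_n,
\]
which is the claimed bound $\prob(H_n) = O(P_n) + o(1)$.

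The main obstacle, and essentially the only real work, lies in extracting from~\cite[Corollary~1]{CW-06} the uniform ''bounded-factor'' comparison above, with $K$ independent of $n$ and of the particular $(\nn_0,\mm_0) \in \mathcal{E}$. That corollary is phrased in terms of asymptotic enumerations of graphs realising a given degree sequence, compared against multinomial sequences with the same sum and minimum coordinate at least $k$. The uniformity required here reduces to checking that the implicit error factors in those asymptotic formulas are bounded by a constant throughout the window of $(\nn_0,\mm_0)$ permitted by~\eqref{eq:number-of-vertices-in-k-core}--\eqref{eq:number-of-edges-in-k-core}; for $c > c_k$ (so $\mu_c > 0$ and the relevant vertex and edge counts are both linear in $n$), this follows from standard Stirling-type estimates applied to the enumeration identities of~\cite{CW-06}. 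Once uniformity of $K$ is established, no additional calculation is needed, and the proof is complete.
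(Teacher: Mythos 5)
Your derivation is correct and matches the paper's treatment: the paper offers no independent proof, presenting the proposition precisely as a restatement of~\cite[Corollary~1]{CW-06}, which is exactly the conditional, uniformly-bounded-factor comparison you extract before conditioning on $(\nn,\mm)$ lying in the typical window of~\eqref{eq:number-of-vertices-in-k-core}--\eqref{eq:number-of-edges-in-k-core} and applying total probability. The only difference is that you spell out the conditioning and the uniformity of the constant, work the paper implicitly delegates to the cited corollary.
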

Next, let $\P(\nn,\mm,k)$ denote the probability space of sequences consisting of $\hat n$ independent copies of $Z_k(\lambda)$, where $\lambda$ is chosen so that
\begin{equation}\label{lambdadef}
\E \left[Z_k(\lambda)\right] = \frac{2\hat{m}}{\hat{n}}.
\end{equation}
As observed in the proof of~\cite[Lemma~1]{CW-06}, the probability that the sum of   $\nn$ copies  of $Z_k(\lambda)$ is $2\hat m$ is $\Omega(1/\sqrt{\nn})$. It follows that we may replace $\prob_{{\cal M}(\nn,\hat m, k)}$ by  $\prob_{\P(\hat n,\mm,k)}$  in the above proposition, as long as we replace $O( P_n)$ by $O(\sqrt n \cdot P_n)$. We may also replace  $\K(n,m,k)$ by   $\K(n,c,k)$ where $c=2m/n$,   using the well-known strong connection between $\gnm$ and  $\gnp$ in this case. That is, we have the following.

\begin{thm}\label{thm:maincor}
Let $k\ge 3$ and $c>c_k$ be fixed, and let  $H_n$  be any event in
the  probability space defined by the random vector distributed as the degree
sequence of $\K(n,c,k)$.
Suppose that whenever $\hat n$ and $\hat m$ have the asymptotic behaviour given in~\eqref{eq:number-of-vertices-in-k-core} and~\eqref{eq:number-of-edges-in-k-core} respectively, it follows that
$$
\prob_{\P(\hat n,\mm,k)} (H_n) <P_n.
$$
Then $\prob(H_n)=O(\sqrt n \cdot P_n)$+o(1).
\end{thm}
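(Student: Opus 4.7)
The plan is to deduce the statement from Proposition~\ref{thm:maincorfirst} by performing the two model changes sketched in the paragraph preceding the statement: replacing the conditioned multinomial $\mathcal{M}(\nn,\mm,k)$ by the product distribution $\P(\nn,\mm,k)$ on the side of degree sequences, and replacing $\gnm$ by $\gnc$ on the side of the underlying random graph. Each transition loses at most a factor of $O(\sqrt n)$ or an additive $o(1)$, and combining them yields the advertised bound.

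For the first change, I would use the classical fact that the restricted multinomial $\mathcal{M}(\nn,\mm,k)$ can be realised as the joint distribution of $\nn$ iid copies of $Z_k(\lambda)$ with $\lambda$ as in \eqref{lambdadef}, conditioned on their sum being $2\mm$. Writing $Y_1,\dots,Y_{\nn}$ for such iid copies, this gives, for any event $H_n$ on the degree sequence,
\[
\prob_{\mathcal{M}(\nn,\mm,k)}(H_n)
= \frac{\prob_{\P(\nn,\mm,k)}\bigl(H_n\cap\{\sum_i Y_i=2\mm\}\bigr)}{\prob_{\P(\nn,\mm,k)}[\sum_i Y_i=2\mm]}
\le \frac{\prob_{\P(\nn,\mm,k)}(H_n)}{\prob_{\P(\nn,\mm,k)}[\sum_i Y_i=2\mm]}.
\]
The paper already cites from~\cite{CW-06} that the denominator is $\Omega(1/\sqrt{\nn})$, so (using $\nn\le n$) a hypothesis $\prob_{\P(\nn,\mm,k)}(H_n)<P_n$ yields $\prob_{\mathcal{M}(\nn,\mm,k)}(H_n)=O(\sqrt n\cdot P_n)$. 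Feeding this into Proposition~\ref{thm:maincorfirst} with $P_n$ replaced by $O(\sqrt n\cdot P_n)$ already gives the conclusion for $\K(n,m,k)$, i.e., for the $k$-core of $\gnm$ with $m=(1+o(1))cn/2$.

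For the second change, I would condition on the number of edges. If $G\sim\gnc$ and $M=e(G)\sim\Bin(\binom{n}{2},c/n)$, then $M$ is sharply concentrated around $m_0:=\binom{n}{2}c/n$. Choosing, say, $I=\{m:|m-m_0|\le n^{2/3}\}$, we have $\prob[M\in I]=1-o(1)$, and every $m\in I$ satisfies $m=(1+o(1))cn/2$. Conditioned on $M=m$, $G$ is distributed as $\gnm$; the right-hand sides of \eqref{eq:number-of-vertices-in-k-core} and \eqref{eq:number-of-edges-in-k-core} are continuous functions of the density $2m/n$ via the fixed-point equation $x=\Psi_{\ge k-1}(cx)$ defining $\beta(c)$, so the asymptotic behaviour assumed in Proposition~\ref{thm:maincorfirst} holds, with the same leading constants $\Psi_{\ge k}(\mu_c)$ and $\tfrac12\mu_c\Psi_{\ge k-1}(\mu_c)$, uniformly across $m\in I$. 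Summing over $m$,
\[
\prob_{\gnc}(H_n)
\le \sum_{m\in I}\prob[M=m]\,\prob_{\gnm}(H_n) + \prob[M\notin I]
= O(\sqrt n\cdot P_n) + o(1),
\]
which is the desired bound.

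The whole argument is essentially bookkeeping once Proposition~\ref{thm:maincorfirst} and the $\Omega(1/\sqrt{\nn})$ estimate from~\cite{CW-06} are in hand; the only place where care is needed is checking that the asymptotic hypothesis of Proposition~\ref{thm:maincorfirst} persists under conditioning on $M=m$ for $m\in I$, which is guaranteed by the continuity of $\beta(c)$ and $\mu_c=c\beta(c)$ in $c$. No new ingredients beyond what is already cited in the excerpt should be required.
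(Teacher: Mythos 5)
Your proposal is correct and follows essentially the same two-step route the paper sketches in the paragraph preceding the statement: first pass from the conditioned multinomial $\mathcal{M}(\nn,\mm,k)$ to the product of independent truncated Poissons $\P(\nn,\mm,k)$ at the cost of the local-limit factor $\Omega(1/\sqrt{\nn})$ from~\cite{CW-06}, which turns $O(P_n)$ into $O(\sqrt{n}\,P_n)$; then transfer from $\K(n,m,k)$ to $\K(n,c,k)$ by conditioning on the edge count $M$ of $\gnp$ and using its concentration around $cn/2$ together with the continuity of $\mu_c$ in $c$. You have simply filled in the bookkeeping the paper leaves implicit (in particular, the observation that $\mathcal{M}(\nn,\mm,k)$ is the $\P(\nn,\mm,k)$ law conditioned on the sum being $2\mm$, and the uniformity of Proposition~\ref{thm:maincorfirst} over $m$ in the concentration window), and nothing in your argument goes wrong.
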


\section{\label{sec:technical-proofs}Technical Proofs}

In this section we prove Theorem~\ref{thm:shatter-time} (in Section~\ref{sec:shatter-proof}),
Lemma~\ref{lem:good-tree} (in Section~\ref{sec:good-tree}) and Lemma~\ref{lem:no-bad-two-cores}
(in Section~\ref{sec:no-bad-two-cores}). We first provide some more results that will be used in the proofs of these lemmas.

    Let $\nn$ and $\mm$ denote the number of vertices and edges in $\K$, respectively.   Since in the proof of Lemma~\ref{lem:no-bad-two-cores} we work with fixed degree sequences, we wish to characterise a set of  sequences which contains the   degree sequences typical for~$\K$, and is in particular  compliant with the typical asymptotic values of~$\nn$ and~$\mm$, as well as the typical degree histogram of~$\K$, stated in Section~\ref{sec:new}.   Note that the following definition   technically and tacitly applies  to a fixed sequence of degree sequences $\dd$, one for each $n$, since it describes an asymptotic property of the degree sequence.

\begin{defn*}
A degree sequence $\dd \in \D_{\nn,2\mm}$ is \emph{proper} (with respect to the underlying parameters $n$,$k$ and $c$) if~$\nn$ and~$\mm$ satisfy~\eqref{eq:number-of-vertices-in-k-core} and~\eqref{eq:number-of-edges-in-k-core} respectively,   $\Delta(\dd) \le \log n$,
$\sum [d_i]_2 = \Theta(n)$, and the degree distribution follows the asymptotics in~\eqref{degree-asy}. In this case, we have
 for each fixed integer $j\ge 0$ that
\begin{equation*}
\left|\{i \mid d_i = j\}\right| = \begin{cases}
0, & j < k;\\
\left(1+o\left(1\right)\right)\Psi_{j}\left(\mu_c\right)n & j\ge k.
\end{cases}
\end{equation*}
 \end{defn*}

Given the degree sequence of $\K$, we perform some of the analysis in the proof of Lemma~\ref{lem:no-bad-two-cores} by using the configuration model. In order to make the results we obtain via this model applicable, we need the following immediate corollary of Theorem~\ref{thm:prob-simple}.

\begin{cor}\label{cor:prob-proper}
Let $f(n)$ be any function satisfying $f(n) \to \infty$. Then
\[
\Pr_{G \sim \mathcal{G}(n,\dd) }\left[G\in \P\right] \le f(n)\cdot\Pr_{G^* \sim \mathcal{G}^*(n,\dd)}\left[G^* \in \P\right]
\]
for any proper degree sequence $\dd \in \D_{\nn,2\mm}$ and any multigraph property $\P$.
\end{cor}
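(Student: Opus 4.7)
The plan is to verify that any proper degree sequence $\dd$ satisfies the hypotheses of Theorem~\ref{thm:prob-simple} and yields a bounded value of the parameter $\lambda(\dd)$; the corollary then follows immediately, since a constant multiplicative overhead is dominated by any $f(n) \to \infty$.

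First, I would check the two quantitative hypotheses of Theorem~\ref{thm:prob-simple}. By definition of a proper degree sequence, $\Delta(\dd) \le \log n$, which is certainly at most $n^{1/6}$ for $n$ large enough. Also by definition, $\sum_{i=1}^{\nn} [d_i]_2 = \Theta(n)$, which in particular is $\Omega(n)$. Hence Theorem~\ref{thm:prob-simple} applies and gives
\[
\Pr_{G \sim \mathcal{G}(\nn,\dd)}[G \in \P] \le (1+o(1))e^{\lambda(\lambda+1)}\cdot\Pr_{G^* \sim \mathcal{G}^*(\nn,\dd)}[G^* \in \P],
\]
where $\lambda = \frac12 \sum [d_i]_2 / \sum d_i$.

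Next, I would show that $\lambda = \lambda(\dd) = O(1)$. The numerator $\frac12 \sum_{i=1}^{\nn} [d_i]_2$ is $\Theta(n)$ by the properness assumption. For the denominator, $\sum_{i=1}^{\nn} d_i = 2\mm$, and by~\eqref{eq:number-of-edges-in-k-core} we have $\mm = (1+o(1)) \frac12 \mu_c \Psi_{\ge k-1}(\mu_c) n = \Theta(n)$, since $c > c_k$ implies $\mu_c = c\beta(c) > 0$ and $\Psi_{\ge k-1}(\mu_c) > 0$. Therefore $\lambda = \Theta(n)/\Theta(n) = \Theta(1)$, and consequently $e^{\lambda(\lambda+1)} = O(1)$.

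Combining the two bounds, $(1+o(1))e^{\lambda(\lambda+1)}$ is bounded above by an absolute constant $K$ (depending only on $k$ and $c$). For any $f(n) \to \infty$ we have $K \le f(n)$ for all sufficiently large $n$, whence
\[
\Pr_{G \sim \mathcal{G}(\nn,\dd)}[G \in \P] \le f(n) \cdot \Pr_{G^* \sim \mathcal{G}^*(\nn,\dd)}[G^* \in \P],
\]
as desired. There is no real obstacle here; the entire content of the corollary is the observation that $\lambda$ stays bounded for proper sequences, so the multiplicative factor in Theorem~\ref{thm:prob-simple} is $O(1)$ and can be absorbed into any slowly growing $f(n)$.
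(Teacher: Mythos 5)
Your proposal is correct and matches what the paper intends: the corollary is stated as an immediate consequence of Theorem~\ref{thm:prob-simple}, and your verification (maximum degree $\le \log n \le \nn^{1/6}$, $\sum [d_i]_2 = \Theta(n)$ with $\nn,\mm = \Theta(n)$ for a proper sequence, hence $\lambda = O(1)$ and the factor $(1+o(1))e^{\lambda(\lambda+1)}$ is a constant absorbed by any $f(n)\to\infty$) is exactly the argument left implicit there. No issues.
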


To show that we may restrict to proper degree sequences, and other kinds to be defined below, we first show that the upper tail of the sum of squared degrees is negligible in $\mathcal{G}(n,p)$, as follows. \begin{lem}\label{l:second_moment_degrees}
Let $(D_0, D_1,\ldots )$ be the degree histogram of $G\in\gnc$. Then WHP
\begin{description}
\item{(a)}
 $D_j=0$ for all $j\ge \log n$;
\item{(b)} for every $\epsilon>0$ there exists an integer $j_0$ such that  $ \sum_{j\ge j_0} [j]_2D_j/n <\epsilon$.
  \end{description}
\end{lem}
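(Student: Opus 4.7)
\medskip

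The plan is to prove (a) by a union bound on vertex degrees and (b) by the second moment method, applied to the tail sum $\sum_{j\ge j_0}[j]_2 D_j$. To handle the quantifier ``for every $\epsilon>0$'' in (b), I will prove the statement for each fixed $\epsilon>0$ separately (WHP), and then take a countable union bound over $\epsilon = 1/k$ for $k\in\mathbb N$ to make the conclusion hold simultaneously for all $\epsilon>0$ WHP.

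For (a), each vertex has degree $\Bin(n-1,c/n)$. Using $\binom{n-1}{j}(c/n)^j \le (ec/j)^j$, I would bound
\[
\prob\!\left[\deg(v)\ge \log n\right] \le (ec/\log n)^{\log n} = \exp\!\bigl(\log n\cdot (1+\log(ec/\log n))\bigr) = o(1/n),
\]
since $\log(ec/\log n)\to-\infty$. A union bound over $v\in V$ yields WHP $D_j=0$ for all $j\ge\log n$.

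For (b), fix $\epsilon>0$ and set $Y=Y_{j_0}:=\sum_{j\ge j_0}[j]_2 D_j = \sum_{v}f(\deg(v))$ with $f(d):=[d]_2\mathbf 1[d\ge j_0]$. Using the identity~\eqref{eq:poisson-obs}-style factorial moment computation (which for $X\sim\Bin(n-1,c/n)$ gives $\E[[X]_2\,g(X)]=(n-1)(n-2)(c/n)^2\,\E[g(Y+2)]$ with $Y\sim\Bin(n-3,c/n)$), linearity of expectation yields
\[
\E[Y]/n \;=\; (1+o(1))\,c^2\,\prob\!\left[\Bin(n-3,c/n)\ge j_0-2\right] \;\longrightarrow\; c^2\,\Psi_{\ge j_0-2}(c)
\]
as $n\to\infty$. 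Since $\sum_{j\ge 0}[j]_2\Psi_j(c)=c^2<\infty$, the tail $\Psi_{\ge j_0-2}(c)$ can be made smaller than $\epsilon/(3c^2)$ by choosing $j_0=j_0(\epsilon,c)$ large enough, so for $n$ large we have $\E[Y]/n<\epsilon/2$.

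For the variance, I decompose $\var(Y)=\sum_v \var(f(\deg v))+\sum_{u\ne v}\mathrm{Cov}(f(\deg u),f(\deg v))$. Each single-vertex variance is bounded by $\E[f(\deg v)^2]\le\E[[\Bin(n-1,c/n)]_2^2]=O(1)$ (with an implicit constant depending only on $c$, $j_0$), so the diagonal contribution is $O(n)$. For the covariance, I exploit independence: writing $\deg(u)=I_{uv}+X_u$ and $\deg(v)=I_{uv}+X_v$ with $I_{uv},X_u,X_v$ mutually independent ($p=c/n$, $X_u,X_v\sim\Bin(n-2,c/n)$), a direct calculation yields
\[
\mathrm{Cov}(f(\deg u),f(\deg v)) \;=\; p(1-p)\bigl(\E[f(X_u+1)]-\E[f(X_u)]\bigr)^2.
\]
Since $f(X+1)-f(X)$ is zero for $X\le j_0-2$, equals $j_0(j_0-1)$ at $X=j_0-1$, and equals $2X$ for $X\ge j_0$, the difference $|\E[f(X+1)]-\E[f(X)]|=O(1)$, so each covariance is $O(1/n)$ and their sum contributes $O(n)$. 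Hence $\var(Y)=O(n)$, and Chebyshev gives $\prob[Y\ge \epsilon n]\le \prob[|Y-\E Y|\ge \epsilon n/2]=O(1/(n\epsilon^2))=o(1)$, establishing (b) for the fixed~$\epsilon$.

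The main technical point is the covariance estimate, which is clean once one notes that the only dependence between $\deg(u)$ and $\deg(v)$ comes through the single edge indicator $I_{uv}$; after that, part (b) is a routine second moment computation, and part (a) is a one-line union bound.
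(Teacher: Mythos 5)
Your computations are correct, and your route is genuinely different from the paper's. The paper controls each count $D_j$ separately: part (a) is quoted from the maximum-degree bound behind the third case of~(\ref{eq:degree-distribution-gnc}), and for (b) it restricts to $j<\log n$, uses $\E[D_j]<c^jn/j!$ and $\var(D_j)=O(n\log^2n)$, and applies Chebyshev plus a union bound over $j$ to obtain a single event $\{D_j\le\E[D_j]+n^{3/4}\text{ for all }j<\log n\}$ on which, for every $\eps$, a suitable $j_0=j_0(\eps,c)$ works simultaneously. You instead run the second moment method directly on the tail functional $Y=\sum_v f(\deg v)$: the factorial-moment identity for $\E[Y]$, the exact covariance formula $p(1-p)\bigl(\E[f(X+1)]-\E[f(X)]\bigr)^2$ coming from the shared-edge decomposition, and the resulting bound $\var(Y)=O(n)$ are all correct, so Chebyshev settles the claim for each fixed $\eps$ with $j_0=j_0(\eps,c)$; your union-bound proof of (a) is also fine (the paper's cited bound is in fact stronger, giving maximum degree $O(\log n/\log\log n)$). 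The one caveat is your final ``countable union bound over $\eps=1/k$'': a union of infinitely many events, each of probability $o(1)$, need not have probability $o(1)$, so as written you only get a separate WHP statement for each fixed $\eps$, not one WHP event valid for all $\eps$ simultaneously. This is essentially harmless: since $j_0$ must not depend on $n$ (otherwise (b) is trivially true for every graph), the per-$\eps$ formulation is the real content and is what the paper actually uses (e.g.\ with $\eps=1$ to get $\sum_i[d_i]_2=O(n)$); if you want literal simultaneity you should either diagonalize (take $k=k(n)\to\infty$ slowly) or argue as the paper does, where the single good event controlling all $D_j$ at once handles every $\eps$ uniformly.
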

\begin{proof} Part (a) is well-known and follows from the third case in~(\ref{eq:degree-distribution-gnc}). Part (b) also follows by  standard methods, for instance as follows. First, note that we may assume that (a) holds and hence restrict the summation to $j<\log n$. Standard computations show for such $j$  that $\E [D_j]<c^jn/j!$ and the variance of $D_j$ is $O(n \log^2 n)$.  Chebyshev's inequality, together with the union bound, then  implies that WHP $D_j<\E [D_j]+n^{3/4}$ for all $j<\log n$. The result now follows, given the above bound on $\E [D_j]$.
\end{proof}
Note that if $G$ satisfies Part~$(b)$ of the lemma, then $\sum [d_i]_2 = O(n)$.
Indeed, let $j_0$ such that $ \sum_{j\ge j_0} [j]_2D_j/n < 1$. Then all vertices of $G$ of degree at least $j_0$ contribute at most $n$ to $\sum [d_i]_2$, while all other vertices contribute at most $[j_0]_2n$.
Since the bounds on degree counts of $G\in\gnc$ are also bounds for its core $\K$, an immediate consequence of Lemmas~\ref{CWnm},~\ref{CWdeg} and~\ref{l:second_moment_degrees} is the following.
\begin{cor}\label{l:proper_core}
The degree sequence of the core $\K$ is WHP proper.
\end{cor}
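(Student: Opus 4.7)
The plan is to verify each of the four defining properties of a proper degree sequence by directly assembling the three preceding lemmas, which already do essentially all of the work. First, the asymptotics $\nn = (1+o(1))\Psi_{\ge k}(\mu_c)n$ and $\mm = (1+o(1))\tfrac{1}{2}\mu_c\Psi_{\ge k-1}(\mu_c)n$ are handed to us by Lemma~\ref{CWnm}. Second, since $\K$ is a subgraph of the host $G\sim\gnc$, every vertex degree in $\K$ is at most the corresponding degree in $G$; Lemma~\ref{l:second_moment_degrees}(a) says WHP no vertex of $G$ has degree at least $\log n$, and so WHP $\Delta(\dd) \le \log n$.

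For the degree histogram, I would combine Lemma~\ref{CWdeg} with the asymptotic value of $\nn$ obtained in the first step. Lemma~\ref{CWdeg} says that for each fixed $j \ge k$, the number of degree-$j$ vertices in $\K$ is WHP $(1+o(1))\Pr[Z_k(\mu_c)=j]\nn$; expanding $\Pr[Z_k(\mu_c)=j] = \Psi_j(\mu_c)/\Psi_{\ge k}(\mu_c)$ and using $\nn = (1+o(1))\Psi_{\ge k}(\mu_c)n$ collapses this to $(1+o(1))\Psi_j(\mu_c)n$, matching equation~\eqref{degree-asy}. Vertices of degree below $k$ do not exist in $\K$ by definition of the $k$-core. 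For $\sum_i [d_i]_2 = \Theta(n)$, the upper bound follows from Lemma~\ref{l:second_moment_degrees}(b) applied to $G$, together with the remark immediately after that lemma, and the monotonicity of degrees under taking subgraphs. The matching lower bound is immediate: $c>c_k$ forces $\Psi_{\ge k}(\mu_c) > 0$, and hence $\nn = \Theta(n)$, while each vertex of $\K$ has degree at least $k \ge 3$, giving $\sum_i [d_i]_2 \ge k(k-1)\nn = \Omega(n)$.

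To finish, I would intersect the finitely many WHP events supplied by Lemmas~\ref{CWnm}, \ref{CWdeg}, and \ref{l:second_moment_degrees}, which remains a WHP event. There is no substantial obstacle: the argument is purely organisational. The only care required is in translating the $Z_k(\mu_c)$ formulation of Lemma~\ref{CWdeg} into the $\Psi_j(\mu_c)n$ form demanded by the definition of \emph{proper}, and in verifying that the $O(n)$ bound on $\sum_i [d_i]_2$ transfers cleanly from the ambient graph $G$ to its $k$-core.
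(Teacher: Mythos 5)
Your proposal is correct and follows essentially the same route as the paper, which also obtains the corollary by combining Lemmas~\ref{CWnm}, \ref{CWdeg} and~\ref{l:second_moment_degrees} together with the observation that degree bounds for $G$ transfer to its core $\K$ (the paper notes the $O(n)$ bound on $\sum_i [d_i]_2$ explicitly and leaves the trivial $\Omega(n)$ lower bound implicit, which you spell out). No gaps; the extra care you take in translating Lemma~\ref{CWdeg} into the $\Psi_j(\mu_c)n$ form matches the remark made inside the definition of a proper sequence.
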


\medskip

The following claim, related to moments of the Poisson distribution,
is used in the proof of Theorem~\ref{thm:shatter-time}.
\begin{claim}
\label{clm:poisson-convolution}For real numbers $\mu\ge\lambda\ge0$
and integers $k\ge\ell\ge0$ we have
\[
\sum_{j=0}^{\infty}\left[j\right]_{\ell}\Psi_{j}\left(\lambda\right)\Psi_{\ge k-j}\left(\mu-\lambda\right)=\lambda^{\ell}\Psi_{\ge k-\ell}\left(\mu\right).
\]
\end{claim}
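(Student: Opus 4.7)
The plan is to reduce the identity to the familiar fact that the sum of two independent Poisson variables is Poisson. The left-hand side essentially encodes a conditional probability computation; once the falling factorial is absorbed, what remains is a convolution.

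First I would use the identity~\eqref{eq:poisson-obs}, namely $[j]_\ell \Psi_j(\lambda) = \lambda^\ell \Psi_{j-\ell}(\lambda)$, valid for all $j \ge 0$ (the terms with $j < \ell$ vanish since $[j]_\ell = 0$ there). Substituting this into the sum gives
\[
\sum_{j=0}^{\infty}[j]_\ell \Psi_j(\lambda)\Psi_{\ge k-j}(\mu-\lambda)
= \lambda^\ell \sum_{j=\ell}^{\infty} \Psi_{j-\ell}(\lambda)\Psi_{\ge k-j}(\mu-\lambda),
\]
and after reindexing with $i = j - \ell$ this becomes
\[
\lambda^\ell \sum_{i=0}^{\infty} \Psi_i(\lambda)\Psi_{\ge (k-\ell)-i}(\mu-\lambda).
\]

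It then suffices to show that the remaining sum equals $\Psi_{\ge k-\ell}(\mu)$. I would interpret this probabilistically: let $X \sim \Poisson(\lambda)$ and $Y \sim \Poisson(\mu-\lambda)$ be independent (this uses $\mu \ge \lambda$ so $\mu - \lambda \ge 0$). Then $X + Y \sim \Poisson(\mu)$, and conditioning on the value of $X$,
\[
\Psi_{\ge k-\ell}(\mu) = \Pr[X+Y \ge k-\ell] = \sum_{i=0}^{\infty} \Pr[X=i]\,\Pr[Y \ge (k-\ell)-i] = \sum_{i=0}^{\infty} \Psi_i(\lambda)\Psi_{\ge (k-\ell)-i}(\mu-\lambda),
\]
where terms with $(k-\ell)-i \le 0$ contribute $\Psi_{\ge (k-\ell)-i}(\mu-\lambda) = 1$, as they should. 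Combining the two displays yields the claim.

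There is really no obstacle here; the only thing to watch is that the reindexing is clean and that the hypothesis $\mu \ge \lambda$ is invoked so that $\mu - \lambda$ is a legitimate Poisson parameter. If one prefers a purely algebraic proof avoiding probability, the same argument goes through by writing $\Psi_{\ge (k-\ell)-i}(\mu-\lambda) = \sum_{r \ge (k-\ell)-i} \Psi_r(\mu-\lambda)$ and using the binomial identity $\sum_{i+r=s}\Psi_i(\lambda)\Psi_r(\mu-\lambda) = \Psi_s(\mu)$ term-by-term.
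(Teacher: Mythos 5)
Your proposal is correct and follows essentially the same route as the paper: absorb the falling factorial via $[j]_{\ell}\Psi_{j}(\lambda)=\lambda^{\ell}\Psi_{j-\ell}(\lambda)$, reindex, and then apply the convolution identity for independent Poisson variables (the paper just presents these two steps in the reverse order, proving the $\ell=0$ case first). No issues to report.
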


\begin{proof}
First we prove the claim for $\ell = 0$, that is
\begin{equation}\label{eq:poisson-convolution-0}
\sum_{j=0}^{\infty}\Psi_{j}\left(\lambda\right)\Psi_{\ge k-j}\left(\mu-\lambda\right)=\Psi_{\ge k}\left(\mu\right).
\end{equation}
Let $X\sim \Poisson(\lambda)$ and $Y\sim \Poisson(\mu-\lambda)$ be independent. Then $X+Y\sim\Poisson(\mu)$ and thus
\begin{align*}
\sum_{j=0}^{\infty}\Psi_{j}\left(\lambda\right)\Psi_{\ge k-j}\left(\mu-\lambda\right)
&= \sum_{j=0}^{\infty} \Pr[X=j]\Pr[Y\ge k-j]
= \sum_{j=0}^{\infty} \Pr[X=j \wedge Y\ge k-j] \\
&= \Pr[X+Y\ge k]
= \Psi_{\ge k}\left(\mu\right).
\end{align*}

Now, using the fact that $\left[j\right]_{\ell}=0$ for every $0\le j<\ell$ we get that the claim holds for all $\ell$:
\begin{align*}
\sum_{j=0}^{\infty}\left[j\right]_{\ell}\Psi_{j}\left(\lambda\right)\Psi_{\ge k-j}\left(\mu-\lambda\right)
& = \sum_{j=\ell}^{\infty}\left[j\right]_{\ell}\Psi_{j}\left(\lambda\right)\Psi_{\ge k-j}\left(\mu-\lambda\right)\\
& \stackrel{\eqref{eq:poisson-obs}}{=} \lambda^{\ell}\sum_{j=\ell}^{\infty}\Psi_{j-\ell}\left(\lambda\right)\Psi_{\ge k-j}\left(\mu-\lambda\right)\\
& = \lambda^{\ell}\sum_{j=0}^{\infty}\Psi_{j}\left(\lambda\right)\Psi_{\ge (k-\ell)-j}\left(\mu-\lambda\right)\\
& \stackrel{\eqref{eq:poisson-convolution-0}}{=} \lambda^{\ell}\Psi_{\ge k-\ell}\left(\mu\right).\qedhere
\end{align*}
\end{proof}

\medskip
The following lemma, bounding from above the probability of a truncated Poisson random variable achieving its minimum, is used in the proofs of Lemmas~\ref{lem:good-tree} and~\ref{lem:no-bad-two-cores}.
\begin{lem}
\label{lem:technical} For $k\ge3$ and $c>c_{k}$ let
\[
\delta=\delta\left(k,c\right)=\frac{1}{2}\left(1-\frac{c_{k}}{c}\right)\frac{k-2}{k-1}.
\]
Then
\[
\Pr\left[Z_{k-1}\left(\mu_{c}\right)=k-1\right]<\frac{1-2\delta}{k-1}.
\]
\end{lem}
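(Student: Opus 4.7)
The plan is to reduce the desired bound to a quantitative convexity statement about the function $f(\mu):=\mu/\Psi_{\ge k-1}(\mu)$. First, using the Poisson identity $\Psi_{k-1}(\mu)=\tfrac{\mu}{k-1}\Psi_{k-2}(\mu)$ (a special case of~\eqref{eq:poisson-obs}) together with $\mu_c = c\beta$, where $\beta=\Psi_{\ge k-1}(\mu_c)$, I will compute
\[
\Pr\bigl[Z_{k-1}(\mu_c)=k-1\bigr] \;=\; \frac{\Psi_{k-1}(\mu_c)}{\beta} \;=\; \frac{c\,\Psi_{k-2}(\mu_c)}{k-1}.
\]
After plugging in the expression for $\delta$, the desired inequality becomes algebraically equivalent to
\[
(k-1)\,c\,\bigl(1-c\Psi_{k-2}(\mu_c)\bigr) \;>\; (k-2)(c-c_k).
\]

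Next I will exploit the standard variational characterisation $c_k=\min_{\mu>0} f(\mu)$, attained at $\mu_k$, whose first-order condition $f'(\mu_k)=0$ rearranges to $c_k\Psi_{k-2}(\mu_k)=1$. A direct computation gives
\[
f'(\mu)=\frac{\Psi_{\ge k-1}(\mu)-\mu\Psi_{k-2}(\mu)}{\Psi_{\ge k-1}(\mu)^{2}}, \qquad c\bigl(1-c\Psi_{k-2}(\mu_c)\bigr)=\mu_c\, f'(\mu_c).
\]
Writing $c-c_k = f(\mu_c)-f(\mu_k) = \int_{\mu_k}^{\mu_c} f'(s)\,ds$, the previous display is in turn equivalent to
\[
(k-1)\,\mu_c\,f'(\mu_c) \;>\; (k-2)\int_{\mu_k}^{\mu_c} f'(s)\,ds.
\]

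The crux is then to show that $f$ is convex on $[\mu_k,\mu_c]$. Given convexity, $f'$ is nondecreasing on this interval, so $\int_{\mu_k}^{\mu_c} f'(s)\,ds \le (\mu_c-\mu_k)\,f'(\mu_c)$, and the last inequality collapses to
\[
(k-1)\mu_c - (k-2)(\mu_c-\mu_k) \;=\; \mu_c+(k-2)\mu_k \;>\;0,
\]
which is trivial; the inequality is strict because $f'(\mu_c)>0$ (as $\mu_c>\mu_k$). A routine double differentiation of $f$, using $\Psi'_{k-2}=\Psi_{k-3}-\Psi_{k-2}$ and $\mu\Psi_{k-3}=(k-2)\Psi_{k-2}$, reduces $f''(\mu)\ge 0$ to the pointwise inequality
\[
2\mu\,\Psi_{k-2}(\mu) \;\ge\; (k-\mu)\,\Psi_{\ge k-1}(\mu),
\]
which is immediate for $\mu\ge k$ and, on $[\mu_k,k)$, can be verified using the standard fact $\mu_k>k-2$ (a consequence of the critical-point analysis defining $c_k$) together with explicit Poisson estimates.

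The main obstacle will be this pointwise convexity check on the sub-interval $[\mu_k,k)$: the algebraic reduction to $2\mu\,\Psi_{k-2}(\mu)\ge(k-\mu)\,\Psi_{\ge k-1}(\mu)$ is clean, but rigorously verifying it throughout the range requires a short case analysis leveraging $\mu_k>k-2$ and log-concavity of the Poisson density.
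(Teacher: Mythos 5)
Your algebraic skeleton is correct, and it is a genuinely different route from the paper's. The identities you use check out: $\Pr[Z_{k-1}(\mu_c)=k-1]=\Psi_{k-1}(\mu_c)/\Psi_{\ge k-1}(\mu_c)=c\Psi_{k-2}(\mu_c)/(k-1)$ (using $\Psi_{\ge k-1}(\mu_c)=\mu_c/c$), the equivalence with $(k-1)c\left(1-c\Psi_{k-2}(\mu_c)\right)>(k-2)(c-c_k)$, the identity $c\left(1-c\Psi_{k-2}(\mu_c)\right)=\mu_c f'(\mu_c)$, the first-order condition $c_k\Psi_{k-2}(\mu_{c_k})=1$, and the computation reducing $f''(\mu)\ge0$ to $2\mu\Psi_{k-2}(\mu)\ge(k-\mu)\Psi_{\ge k-1}(\mu)$ are all right. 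The paper works with the same function $h(\mu)=\mu/\Psi_{\ge k-1}(\mu)$ but never touches second derivatives: it combines the bound $h'(\mu)<1/\Psi_{\ge k-1}(\mu)$ with the mean value theorem to get $\mu_{c_k}<(c_k/c)\mu_c$, and then uses the scaling inequality $F(\alpha\mu)-1<\alpha\left(F(\mu)-1\right)$, read off from the series $F(\mu)-1=\sum_{j\ge1}\mu^j/[j+k-1]_j$, where $F=\Psi_{\ge k-1}/\Psi_{k-1}$. Your plan replaces that second step by convexity of $f$ on $[\mu_{c_k},\mu_c]$ plus a trivial comparison.

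However, the step that carries essentially all the content of the lemma -- the pointwise inequality $2\mu\Psi_{k-2}(\mu)\ge(k-\mu)\Psi_{\ge k-1}(\mu)$ on $[\mu_{c_k},k)$, together with the fact $\mu_{c_k}>k-2$ that you invoke as standard -- is exactly what you leave unproven, so as written the argument is incomplete. Both claims are true and your plan can be completed. At $\mu=\mu_{c_k}$, where $\Psi_{\ge k-1}=\mu\Psi_{k-2}$, your inequality reads $2\ge k-\mu_{c_k}$, i.e.\ it is \emph{equivalent} to $\mu_{c_k}\ge k-2$; the latter holds because for $\mu\le k-2$ one has $F(\mu)-1=\sum_{j\ge1}\mu^j/[j+k-1]_j\le\sum_{j\ge1}\left((k-2)/k\right)^j=(k-2)/2<k-2$, while $F(\mu_{c_k})-1=k-2$ and $F$ is increasing. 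To cover the whole interval, set $g(\mu)=2\mu\Psi_{k-2}(\mu)-(k-\mu)\Psi_{\ge k-1}(\mu)$ and compute $g'(\mu)=(k-2-\mu)\Psi_{k-2}(\mu)+\Psi_{\ge k-1}(\mu)$; the bound $\Psi_{\ge k-1}\ge\Psi_{k-1}=\frac{\mu}{k-1}\Psi_{k-2}$ gives $g'\ge0$ for $\mu\le k-1$, and adding the next term $\Psi_k=\frac{\mu^2}{k(k-1)}\Psi_{k-2}$ gives a quadratic in $\mu$ that one checks is nonnegative on $[k-1,k]$; hence $g$ is nondecreasing on $(0,k]$ and so nonnegative on $[\mu_{c_k},k]$, while for $\mu\ge k$ the inequality is trivial, as you say. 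One more small point: you need $f'(\mu_c)>0$ strictly, and convexity only yields $f'(\mu_c)\ge f'(\mu_{c_k})=0$; strictness follows from the strict monotonicity of $F$ together with $F(\mu_{c_k})=k-1$. With these additions your proof closes; it is more computational than the paper's, but it makes explicit a clean structural fact (convexity of $\mu/\Psi_{\ge k-1}(\mu)$ past the critical point) that the paper's argument bypasses.
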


\begin{proof}
Let
\[
F(\mu):=\frac{\Psi_{\ge k-1}(\mu)}{\Psi_{k-1}(\mu)}=\frac{1}{\Pr\left[Z_{k-1}(\mu)=k-1\right]},
\]
so we need to show that $k-1\le(1-2\delta)F(\mu_{c})$.

Let $h(\mu)=\mu/\Psi_{\ge k-1}(\mu)$, defined for all $\mu>0$. Recall
that $\beta(c)$ was determined to be the maximum solution of $x=\Psi_{\ge k-1}(cx)$,
which enables us to express $c_{k}$ as in~(\ref{eq:ck-definition}).
In terms of~$h$, we can define~$\mu_{c}$ as the maximum solution
of $h(\mu)=c$, which exists if and only if $\beta(c)>0$. We can
therefore express~$c_{k}$ again in the following way:
\[
c_{k}=\inf\{c\mid\exists\mu>0~h(\mu)=c\}=\inf\{h(\mu)\mid\mu>0\}.
\]
Clearly~$h$ is differentiable and its derivative is
\begin{align}
h'(\mu) & =\frac{1}{\Psi_{\ge k-1}(\mu)}\left(1-\mu\frac{(\Psi_{\ge k-1})'(\mu)}{\Psi_{\ge k-1}(\mu)}\right)\nonumber \\
 & =\frac{1}{\Psi_{\ge k-1}(\mu)}\left(1-\mu\frac{\Psi_{k-2}(\mu)}{\Psi_{\ge k-1}(\mu)}\right)\nonumber \\
 & =\frac{1}{\Psi_{\ge k-1}(\mu)}\left(1-(k-1)\frac{\Psi_{k-1}(\mu)}{\Psi_{\ge k-1}(\mu)}\right)\nonumber \\
 & =\frac{1}{\Psi_{\ge k-1}(\mu)}\left(1-\frac{k-1}{F(\mu)}\right)\nonumber \\
 & <\frac{1}{\Psi_{\ge k-1}(\mu)}.\label{eq:deriv}
\end{align}
Note that $h'(\mu)$ is positive if and only if $F(\mu)>k-1$, and
since~$F$ is an increasing function approaching~$1^{+}$ and~$\infty$
as~$\mu$ approaches~$0^{+}$ and~$\infty$, respectively, the
infimum~$c_{k}$ of~$h$ is actually its minimum, attained at a
unique point~$\mu_{c_{k}}$. In particular,~$h$ is increasing for
$\mu>\mu_{c_{k}}$ and since $h'(\mu_{c_{k}})=0$ we have

\begin{equation}
F\left(\mu_{c_{k}}\right)=k-1.\label{eq:zeroDeriv}
\end{equation}
By the mean value theorem (applied to~$h$) there exists some $\tilde{\mu}\in(\mu_{c_{k}},\mu_{c})$
such that
\begin{align*}
\frac{c-c_{k}}{\mu_{c}-\mu_{c_{k}}} & =\frac{h(\mu_{c})-h(\mu_{c_{k}})}{\mu_{c}-\mu_{c_{k}}}=h'(\tilde{\mu})\\
 & \stackrel{\eqref{eq:deriv}}{<}\frac{1}{\Psi_{\ge k-1}(\tilde{\mu})}<\frac{1}{\Psi_{\ge k-1}(\mu_{c_{k}})}=\frac{h(\mu_{c_{k}})}{\mu_{c_{k}}}=\frac{c_{k}}{\mu_{c_{k}}},
\end{align*}
where the second inequality is due to the monotonicity of $\Psi_{\ge k-1}$
in~$\mu$. Rearranging, we get

\begin{equation}
\mu_{c_{k}}<\left(c_{k}/c\right)\mu_{c}.\label{eq:fact1}
\end{equation}
Recall that~$F$ is an increasing function of~$\mu$, so
\begin{align}
1-2\delta & =1-\left(1-\frac{c_{k}}{c}\right)\left(1-\frac{1}{k-1}\right)\nonumber \\
 & \stackrel{\eqref{eq:zeroDeriv}}{=}1-\left(1-\frac{c_{k}}{c}\right)\left(1-\frac{1}{F\left(\mu_{c_{k}}\right)}\right)\nonumber \\
 & >1-\left(1-\frac{c_{k}}{c}\right)\left(1-\frac{1}{F\left(\mu_{c}\right)}\right)\nonumber \\
 & =\frac{1+\left(c_{k}/c\right)\left(F\left(\mu_{c}\right)-1\right)}{F\left(\mu_{c}\right)}.\label{eq:fact2}
\end{align}
Finally, observe that for every $\mu>0$
\begin{align*}
F(\mu)-1 & =\frac{\Psi_{\ge k-1}(\mu)-\Psi_{k-1}(\mu)}{\Psi_{k-1}(\mu)}=\frac{\Psi_{\ge k}(\mu)}{\Psi_{k-1}(\mu)}\\
 & =\sum_{j=k}^{\infty}e^{-\mu}\frac{\mu^{j}}{j!}\Big/\left[e^{-\mu}\frac{\mu^{k-1}}{(k-1)!}\right]\\
 & =(k-1)!\mu^{1-k}\sum_{j=1}^{\infty}\frac{\mu^{j+k-1}}{(j+k-1)!}\\
 & =\sum_{j=1}^{\infty}\frac{\mu^{j}}{\left[j+k-1\right]_{j}},
\end{align*}
and thus $F(\alpha\mu)-1<\alpha(F(\mu)-1)$ for every $0<\alpha<1$,
implying
\[
k-1\stackrel{\eqref{eq:zeroDeriv}}{=}F\left(\mu_{c_{k}}\right)\stackrel{\eqref{eq:fact1}}{<}F\left(\left(c_{k}/c\right)\mu_{c}\right)<1+\left(c_{k}/c\right)\left(F\left(\mu_{c}\right)-1\right)\stackrel{\eqref{eq:fact2}}{<}\left(1-2\delta\right)F\left(\mu_{c}\right),
\]
which establishes the lemma.
\end{proof}

\subsection{\label{sec:shatter-proof}Proof of Theorem~\ref{thm:shatter-time}}

Recall that   degree histograms were defined in Section~\ref{sec:branching-gnp}. An \emph{asymptotic degree histogram}  is a sequence $D=\left(D_{0},D_{1},\ldots\right)$
of functions $D_{j}:\mathbb{N}\to\mathbb{N}$ such that $\sum_{j=0}^{\infty}D_{j}\left(n\right)=n$
and $\sum_{j=0}^{\infty}jD_{j}\left(n\right)$ is even for all $n$.
For a given asymptotic degree histogram $D$, denote by $\Omega\left(n,D\right)$ the set of all simple graphs on $n$ vertices with degree histogram $\left(D_0(n),D_1(n),\ldots\right)$.
If $\Omega\left(n,D\right)\neq\varnothing$ for all $n\ge1$, $D$ is \emph{feasible}; in this case, let $\mathcal{G}\left(n,D\right)$ be the uniform distribution over $\Omega\left(n,D\right)$.
A feasible asymptotic degree histogram $D$ is \emph{sparse}
if $\sum_{j=0}^{\infty}jD_{j}\left(n\right)/n=\kappa_{D}+o\left(1\right)$
for some constant~$\kappa_{D}$, called the asymptotic edge density
of $\mathcal{G}\left(n,D\right)$; $D$ is \emph{well-behaved} if:
\begin{enumerate}
\item There exist constants $\left\{ \delta_{j}\right\} _{j=0}^{\infty}$
such that $\lim_{n\to\infty}D_{j}\left(n\right)/n=\delta_{j}$ for
all fixed $j\ge0$.
\item $\left\{ j\left(j-2\right)D_{j}\left(n\right)/n\right\} _{j=0}^{\infty}$
tends uniformly to $\left\{ j\left(j-2\right)\delta_{j}\right\} _{j=0}^{\infty}$.
\item $\lim_{n\to\infty}\sum_{j=0}^{\infty}j\left(j-2\right)D_{j}\left(n\right)/n$
exists, and the sum uniformly approaches the limit $$Q_{D}:=\sum_{j=0}^{\infty}j\left(j-2\right)\delta_{j}.$$
\end{enumerate}
Molloy and Reed~\cite{MR-95} showed that the sign of $Q_{D}$ WHP determines the
existence of a giant component in $\mathcal{G}\left(n,D\right)$:
\begin{lem}[{\cite[Theorem~1]{MR-95}}]
\label{lem:molloy-reed}Let $D$ be a feasible well-behaved sparse
asymptotic degree histogram, and let $\Delta\left(n\right)=\max\left\{ j\in\mathbb{N}\mid D_{j}\left(n\right)>0\right\}$.
\begin{enumerate}
\item If $\Delta\left(n\right)=o\left(n^{1/4}\right)$ and $Q_{D}>0$ then
WHP $\mathcal{G}\left(n,D\right)$ has a linear-size connected component;
\item If $\Delta\left(n\right)=o\left(n^{1/8}\right)$ and $Q_{D}<0$ then
the size of the largest connected component in $\mathcal{G}\left(n,D\right)$
is WHP $O\left(\Delta^{2}\left(n\right)\log n\right)$.
\end{enumerate}
\end{lem}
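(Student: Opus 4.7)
My plan is to carry out the analysis in the configuration model $\mathcal{G}^{*}(n,D)$ and transfer to $\mathcal{G}(n,D)$ via Theorem~\ref{thm:prob-simple}. Under the sparse, well-behaved hypothesis on $D$ one has $\sum_{j} j(j-1) D_{j}(n) = O(n)$, so the parameter $\lambda(\dd)$ in Theorem~\ref{thm:prob-simple} has a finite limit and $e^{\lambda(\lambda+1)}$ is bounded; hence any property holding with probability $1-o(1)$ in $\mathcal{G}^{*}(n,D)$ also holds WHP in $\mathcal{G}(n,D)$.

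Inside $\mathcal{G}^{*}(n,D)$ I would expose the random matching of half-edges one step at a time by a breadth-first exploration. Start from a vertex $v_{0}$, place its half-edges in an ``active'' queue, and at each step pop one active half-edge and pair it with a uniformly chosen half-edge from the still-unmatched pool. If the partner is attached to a previously undiscovered vertex of degree $j$, append its remaining $j-1$ half-edges to the queue; otherwise nothing new is added. Writing $A(t)$ for the number of active half-edges after $t$ steps and $\tilde D_{j}(t)$ for the number of as-yet-undiscovered vertices of degree $j$, a direct calculation using the size-biased neighbour distribution gives the conditional drift
\[
\E[A(t+1)-A(t)\mid \mathcal{F}_{t}] = \frac{\sum_{j} j(j-2)\tilde D_{j}(t)}{\sum_{j} j \tilde D_{j}(t)} + o(1),
\]
which equals $Q_{D}/\kappa_{D} + o(1)$ throughout the initial phase.

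In the supercritical case $Q_{D}>0$, the process $A(t)$ stochastically dominates a random walk with strictly positive drift. A differential equation method analysis in the style of Wormald tracks $\tilde D_{j}(t)/n$ uniformly in $j$ and shows that for a positive fraction of starts $v_{0}$ the exploration reaches $\Omega(n)$ vertices before $A(t)$ ever returns to zero; the uniform convergence built into ``well-behaved'' is precisely what justifies passing from the exact sums to their $Q_{D}/\kappa_{D}$ limit. A sprinkling argument then glues all such linear-size clusters into a single giant component: reserve a small constant fraction of the matching choices, build the skeleton with the remainder, and use the reserved pairings to bridge any pair of reserved clusters WHP.

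In the subcritical case $Q_{D}<0$ the drift is negative from the outset and the exploration dies out. The main obstacle here is that a single step can change $A(t)$ by as much as $\Delta(n)$, so a direct Azuma--Hoeffding bound on the Doob martingale associated with $A(t)$ only admits step sizes of order $\Delta$. Letting $\tau$ be the hitting time of $A(t)=0$, surviving for $\tau$ steps forces the martingale fluctuation to compensate the expected drift $|Q_{D}|\tau/\kappa_{D}$; the Azuma bound $O(\Delta\sqrt{\tau \log n})$ then forces $\tau = O(\Delta^{2} \log n)$ with probability $1-n^{-\omega(1)}$. A union bound over all $n$ starting vertices yields the claimed $O(\Delta^{2} \log n)$ bound on the size of every component, the tightening of the hypothesis to $\Delta=o(n^{1/8})$ being what makes the requisite martingale and transfer estimates close comfortably.
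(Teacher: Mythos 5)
This lemma is not proved in the paper at all: it is quoted verbatim from Molloy and Reed \cite{MR-95}, so there is no internal proof to compare against. Your sketch is, in spirit, a reconstruction of the original Molloy--Reed argument (exploration of the pairing/configuration model, drift $Q_D/\kappa_D$ per step, Wormald-style tracking in the supercritical case, Azuma in the subcritical case), and the subcritical half of your outline is essentially sound: the drift stays close to $Q_D/\kappa_D<0$ over the first $O\left(\Delta^2\log n\right)$ steps because only $O\left(\tau\Delta^2\right)=o(n)$ of the relevant sums is consumed, the one-step change of $A(t)$ is at most $\Delta$, and Azuma plus a union bound over starting vertices gives the stated component bound.

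Two points, however, are genuine gaps. First, your transfer to simple graphs via Theorem~\ref{thm:prob-simple} is not licensed by its hypotheses: that theorem requires $\Delta(\dd)\le n^{1/6}$ (and $\sum[d_i]_2=\Omega(n)$), whereas part~1 of the lemma allows $\Delta(n)$ up to $o\left(n^{1/4}\right)$; you would need the stronger simplicity estimate valid whenever $\sum d_i^2=O\left(\sum d_i\right)$, or else argue within the model as \cite{MR-95} do. Second, the supercritical ``sprinkling'' step does not work as stated: in a uniform pairing of half-edges there is no independent reservoir of edges to reserve, so ``use the reserved pairings to bridge any pair of reserved clusters'' has no justification --- two skeleton clusters may have no unmatched half-edges left to bridge with, and conditioning on the skeleton does not give you fresh independent randomness of the kind sprinkling in $\gnp$ exploits. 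Fortunately the step is also unnecessary: the lemma only asserts the existence of one linear-size component, which you already get the moment a single exploration accumulates $\Omega(n)$ vertices before its active set empties; what still needs an argument (and is standard, e.g.\ by restarting the exploration from fresh vertices and noting each restart succeeds with probability bounded away from zero while failures consume only $o(n)$ half-edges) is that WHP at least one exploration does survive, rather than merely with positive probability.
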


Having computed the likely degree histogram $D^{t}$ of $G_{t}$
in Claim~\ref{clm:degree-distribution-after-t-steps}, we are ready
to prove Theorem~\ref{thm:shatter-time}.
\begin{proof}[Proof of Theorem~\ref{thm:shatter-time}]
We can view $G_{t}$ as drawn from $\mathcal{G}\left(n,D^{t}\right)$,
since an iteration of the peeling process can be carried out as follows:
first expose the set of vertices $V_{t-1}=\left\{ v\mid\rho\left(v\right)=t-1\right\} $,
then expose their degrees in $G_{t-1}$; finally expose and delete
all edges incident with $V_{t-1}$. Given the degree histogram $D^{t}$,
all edges inside $G_{t}$ remain unexposed.

Implicitly, $G_{t}$ can be regarded as a sequence of random graphs, one for each $n$, and their degree histograms $D^t$ determine an asymptotic degree histogram which we denote by $(D^t)$. Although results like Claim~\ref{clm:degree-distribution-after-t-steps}  only describe events that hold WHP, they can easily be converted to statements about an asymptotic degree histogram such that the events hold for all $n$, and the asymptotic degree histogram WHP coincides with the random graph. (This can be done by altering the histogram on those values of $n$ which violate the required properties of being well behaved.) When we make statements about $(D^t)$ in the following, we assume these slight adjustments are made automatically.
Fix $t\ge 0$.  By definition $D^{t}$ is feasible and it is easy to verify that $D^{t}$
is WHP well-behaved using Claim~\ref{clm:degree-distribution-after-t-steps} for bounded degrees, together with Lemma~\ref{l:second_moment_degrees} for the unbounded degrees.  (Later we will choose a particular value of the constant $t$.)
Applying Claim~\ref{clm:poisson-convolution} with $\lambda=c\beta_{t}$, $\mu=c\beta_{t-1}$ and $\ell=1$, we get that $D^{t}$ is also sparse, with asymptotic edge density
\[
\kappa_{t}:=\kappa_{D^{t}}=\sum_{j=0}^{\infty}j\delta_{j}^{t}=\sum_{j=0}^{\infty}j\Psi_{j}\left(c\beta_{t}\right)\Psi_{\ge k-j}\left(c\beta_{t-1}-c\beta_{t}\right)=c\beta_{t}\Psi_{\ge k-1}\left(c\beta_{t-1}\right)=c\beta_{t}^{2}.
\]
We now bound the parameter $Q_t := Q_{D^t}$, by another application of Claim~\ref{clm:poisson-convolution} with $\lambda=c\beta_{t}$ and $\mu=c\beta_{t-1}$, but this time with $\ell=2$. We get
\[
\sum_{j=0}^{\infty}\left[j\right]_2\delta_j^t
=\sum_{j=0}^{\infty}\left[j\right]_2\Psi_j\left(c\beta_t\right)\Psi_{\ge k-j}\left(c\beta_{t-1}-c\beta_t\right)
=\left(c\beta_t\right)^2\Psi_{\ge k-2}\left(c\beta_{t-1}\right)
=c\kappa_t\Psi_{\ge k-2}\left(c\beta_{t-1}\right),
\]
hence
\begin{align*}
Q_{t} & =\sum_{j=0}^{\infty}j\left(j-2\right)\delta_{j}^{t}=\sum_{j=0}^{\infty}\left(\left[j\right]_{2}-j\right)\delta_{j}^{t}\\
 & =c\kappa_{t}\Psi_{\ge k-2}\left(c\beta_{t-1}\right)-\kappa_{t}\\
 & =\left(c\Psi_{\ge k-2}\left(c\beta_{t-1}\right)-1\right)\kappa_{t}.
\end{align*}
The decreasing sequence $\left(\beta_{t}\right)_{t\ge0}$ converges
to $\beta=0$ in the subcritical regime, and $x\mapsto c\Psi_{\ge k-2}\left(cx\right)$
is a continuous function, so
\[
\lim_{t\to\infty}c\Psi_{\ge k-2}\left(c\beta_{t}\right)=c\Psi_{\ge k-2}\left(c\beta\right)=0.
\]
In particular, there exists some constant $t^{\dagger}$ such that
$c\Psi_{\ge k}\left(c\beta_{t^{\dagger}}\right)<1$, implying $Q_{t^{\dagger}}<0$.
Moreover, $\Delta\left(G_{t^{\dagger}}\right)\le\Delta\left(G_{0}\right)=O\left(\log n/\log\log n\right)$
by~(\ref{eq:degree-distribution-gnc}), and we can finally apply
the second part of Lemma~\ref{lem:molloy-reed} and complete the
proof.
\end{proof}

\begin{rem*}
By~\cite[Lemma~6]{JMT-16} we have $t^{\dagger}=\Theta\left(1/\sqrt{c_{k}-c}\right)$.
\end{rem*}

\subsection{\label{sec:good-tree}Proof of Lemma~\ref{lem:good-tree}: the existence
of a $\left(\log^{2}\log n,L\right)$-tree in $\K$}

We prove Lemma~\ref{lem:good-tree} for $L:=\left\lceil \log_{1-\delta}\left(\delta^{2}/(2\mu_{c})\right)\right\rceil $, where $\delta  <  1/2$ is the constant from Lemma~\ref{lem:technical}. Throughout this subsection we let $N  = \left\lceil \log^{2}\log n\right\rceil$ and $p_j  = \Pr[Z_{k-1}(\mu_{c})=j-1]$.
In addition, we set $C = (1-\delta)^2/(1-2\delta)>1$, and let $d_0$ be the minimal integer satisfying $1-\sum_{i \le d_0} p_i/C \le \delta^2/(2\mu_{c})$. Note that $L,C$ and $d_0$ are all constants depending only on $k$ and $c$.
\medskip

We consider an exploration process in the  $k$-core $\K$, attempting to reveal an $(N,L)$-tree in it, but instead of analysing  the exploration process on $\K$ itself, we condition on  it having  a proper degree sequence $\dd$ and apply the exploration process to the configuration model for the sequence~$\dd$. In view of Lemma~\ref{CWnm} and Corollary~\ref{l:proper_core}, Theorem~\ref{thm:prob-simple} implies that it  is enough  to show that the multigraph of this configuration model WHP contains an $(N,L)$-tree,  where the convergence implicit in WHP  is  uniform over all proper  degree sequences  $\dd$.

 The exploration starts with an arbitrary vertex $v_0$  in this configuration model, and explores its $(2N)$-neighbourhood  in DFS manner. In each exploration step, an unmatched half-edge, say $x$, belonging to an exposed vertex at distance at most $2N-1$ from $v_0$, is matched to some other half-edge, say $y$, chosen uniformly at random from the set of all unmatched half-edges. We refer to the vertex containing $y$ as the \emph{next encountered vertex}. The selection of $x$ in each step is arbitrary among those in the vertex currently being treated by the DFS algorithm.    Initially, $v_0$ is the only exposed vertex, and whenever a new half-edge is being matched, its vertex  (i.e.\ the next encountered vertex) becomes exposed. Unless that vertex was already exposed, the new edge and vertex are added to the growing DFS tree.

Let $T$ denote the tree resulting from the exploration described above. The root of T is $v_0$, and all other vertices of $T$ have distance at most $2N$ from $v_0$ in $T$.
We assign each vertex in $T$
a \emph{type} from $\left\{ 0,1,\ldots,L+1\right\} $ in the following
manner. The type assignment for a vertex $u$ is performed at  the point  when the DFS  algorithm has finished fully exploring the  subtree of $T$ below $u$ and looks to move  back to the parent of $u$ (or terminate, if $u = v_0$). First, if any  back-edge  has been encountered  up to this point in  the  exploration  process, $u$ is assigned type $L+1$.  If no back-edge has been encountered, the following  rules are applied.
If  $u$ is a leaf, i.e. in level $2N$, its  type is set  to $0$.
Otherwise, $u$ is in level $i<2N-1$, and all its children have been assigned types already; denote by $S(u)$ the set of its children of type less than $L$.
Note that $d_T(u) = d_\K(u)$ in this case, so we can omit the subscript, and set
\[
\type\left(u\right)=\begin{cases}
0, & \left|S\left(u\right)\right|\ge k\quad\ \  \mbox{ and } d(u)\le d_0;\\
1+\max\left\{ \type\left(v\right)\mid v\in S\left(u\right)\right\} , & \left|S\left(u\right)\right|=k-1\mbox{ and } d(u)\le d_0;\\
L, & \left|S\left(u\right)\right|<k-1\ \ \,  \mbox{ or } d(u)>d_0.
\end{cases}
\]

For $v\in V(T)$,  let $T(v)$ denote  the subtree  of  $T$  consisting of  $v$ and  all  its  descendants. Given the types of vertices as defined above, let $T^*(v)$ denote the result of removing from  $T(v) $  all subtrees
rooted at vertices of type $L$  or  $L+1$. Then  for every vertex $u$ in
 $T^*(v)$, the number of vertices in a longest $k$-light tree path   originating at $u$    is exactly $\type(u) <L$. If $\type(u) = 0$ it simply means that no such paths exist as $u$  itself  is not $k$-light. In particular, we have the following.
\begin{obs}\label{obs:type}
Let $v\in V(T)$  at level $i$. If  $\type(v)<L$ then $T^*(v)$ is a  $(2N-i,L)$-tree.
\end{obs}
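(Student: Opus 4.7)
The plan is to unwind the three cases of the type definition and verify, one at a time, the conditions required of an $(N,L)$-tree: balance, height $2N-i$, absence of $(k-1)$-light vertices, and absence of $k$-light tree paths of length $L$.

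First I would observe that the hypothesis $\type(v)<L$ prevents $v$ from being assigned type $L+1$, which in turn forces no back-edge to have been encountered at any moment up to the instant the DFS finished exploring the subtree below $v$. In particular the exploration restricted to $V(T(v))$ discovered no back-edges, so $T(v)$ is a genuine rooted tree; moreover every descendant of $v$ received its type at an earlier DFS time, when even fewer back-edges could have been seen, so every vertex of $T(v)$ ends up with type in $\{0,1,\ldots,L\}$. By construction, for each $u\in V(T^*(v))$ the children of $u$ in $T^*(v)$ are exactly the members of $S(u)$.

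I would then treat balance, height, and the minimum-degree condition simultaneously by a reverse induction along the level $j$ in $T$ of vertices belonging to $T^*(v)$. The base case $j=2N$ is immediate, as every such vertex is a leaf of $T$ with $\type=0$ and is retained in $T^*(v)$. For the inductive step at level $j<2N$: if $\type(u)=0$ then $|S(u)|\ge k$, and if $\type(u)\in\{1,\ldots,L-1\}$ then $|S(u)|=k-1\ge 2$; in either case $u$ has at least $k-1$ children in $T^*(v)$, ruling out $(k-1)$-lightness, and the hypothesis applied to each child gives that all leaves of the subtree descending from that child sit at level $2N$ of $T$. Applied at $v$ this simultaneously yields balance and height $2N-i$ for $T^*(v)$.

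The absence of $k$-light tree paths of length $L$ I would handle by an induction on $t\in\{0,1,\ldots,L-1\}$, establishing the invariant that every $k$-light tree path of $T^*(v)$ originating at a vertex of type $t$ contains at most $t$ vertices. When $t=0$ the vertex in question is either a leaf (which the convention stated just before the definition of $(N,L)$-tree declares non-light) or satisfies $|S(u)|\ge k$ and is thus $k$-heavy, so no $k$-light tree path starts at it. When $1\le t\le L-1$ the vertex is $k$-light ($|S(u)|=k-1<k$), each of its children in $T^*(v)$ has type at most $t-1$, and the hypothesis yields the bound $1+(t-1)=t$. Since every vertex of $T^*(v)$ carries a type strictly less than $L$, no $k$-light tree path of $T^*(v)$ attains length $L$.

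The main (and essentially only) obstacle is the preliminary step: one must use $\type(v)<L$ carefully to exclude back-edges in the portion of the exploration relevant to $T(v)$, so that $T(v)$ may be treated as a rooted tree on which the three structural rules for $\type$ behave recursively in the obvious way. Once that is in hand, balance, height, minimum degree, and the bounded-light-path condition all fall out of the two straightforward inductions above.
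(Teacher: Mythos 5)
Your proof is correct and follows essentially the same route as the paper, which treats the observation as immediate from the type rules after noting that for every retained vertex $u$ the longest $k$-light tree path originating at $u$ has exactly $\type(u)<L$ vertices. Your two inductions (reverse induction on levels for balance, height and the absence of $(k-1)$-light vertices, and induction on the type for the light-path bound), together with the preliminary exclusion of back-edges via the $L+1$ rule, simply make explicit the reasoning the paper leaves implicit.
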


Let us now take a closer look at the process of matching half-edges. When the first random half-edge is chosen, the probability that its vertex $u$ has degree $j$ is weighted by a multiplicative factor of $j$ (so-called ``degree-biased'' selection). Hence, for any fixed $j \ge k$ and a proper degree sequence, we have by~\eqref{degree-asy} that the probability that $u$ has degree $j$ is
\begin{equation}\label{eq:deg-biased}
(1+o(1))j\Pr[Z_{k}\left(\mu_{c}\right)=j]\nn/2\mm \stackrel{\eqref{eq:average-degree-in-k-core}}{=} (1+o(1))p_j.
\end{equation}
As the exploration carries on, the degree sequence of the unmatched half-edges does not represent the degrees of the vertices any more, but their ``remaining'' degrees, that is, the number of unmatched half-edges incident with each vertex at that point. Of course, this distinction only applies to the exposed vertices. Additionally, this degree sequence contains values smaller than $k$. To handle these subtleties, we define a new class of sequences, closely related to proper sequences.

For a constant $\eta > 1$, a degree sequence $\dd \in \D_{\nn,2\mm}$ is \emph{$\eta$-normal} (with respect to $n,k,c$ and $d_0$), if $n \ge \nn \ge \Psi_{\ge k}\left(\mu_{c}\right)n/2$ and $\mm \ge \mu_{c}\Psi_{\ge k-1}\left(\mu_{c}\right)n/4$, its maximum degree is at most $\log n$, and for every $j \le d_0$ the degree-biased probability that a half-edge selected uniformly at random belongs to a vertex of (remaining) degree $j$ is between $p_j/\eta$ and $\eta p_j$. Note that in particular, for every $j \le d_0$, the number of  `$j$'  entries in any $\eta$-normal sequence is at least $2\mm p_j/(j\eta) = \Omega(n)$. It is immediate to see that for any fixed $\eta > 1$, every proper sequence is $\eta$-normal for $n$ sufficiently large.

We are finally ready for the main part of the proof.
Recall the definition of $C$ from the beginning of this subsection, and let $C' = (C+1)/2 > 1$.
\begin{claim}\label{clm:pruning}
Assume that $\dd$ is a $C'$-normal sequence and consider any moment during the exploration process when a random half-edge  is about to be chosen.    Conditional upon the  exploration so far, the probability that the    next encountered vertex will  eventually have  type $L$ is at most $\delta/\mu_{c}$.
\end{claim}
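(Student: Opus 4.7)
The plan is to induct on the level $i$ of the next encountered vertex $v$, descending from $i = 2N$ (where leaves trivially get type $0$) down to the root, with inductive hypothesis that every vertex encountered at a deeper level satisfies $\Pr[\text{type}(u) = L \mid \mathcal{H}] \le q^{*} := \delta/\mu_c$ for any history $\mathcal{H}$. Two background facts underpin the argument throughout: (i) since the exploration exposes only $n^{o(1)}$ vertices (because $N = \log^2\log n$ and $\Delta(\dd) \le \log n$), the remaining degree sequence stays within $(1+o(1))$ of the original $C'$-normal one, and the slack $C' < C$ absorbs this perturbation; (ii) the total probability of any back-edge in the exploration is $o(1)$, and back-edges would push affected types to $L+1$ rather than $L$.

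The key decomposition is by $d(v)$:
\[
\Pr[\text{type}(v) = L \mid \mathcal{H}] \le \Pr[d(v) > d_0 \mid \mathcal{H}] + \sum_{j=k}^{d_0} \Pr[d(v) = j \mid \mathcal{H}] \cdot \Pr[\text{type}(v) = L \mid d(v) = j, \mathcal{H}].
\]
The first term is at most $\delta^2/(2\mu_c) + o(1)$ by combining the lower bound $\Pr[d(v) = j] \ge p_j/C'$ from $C'$-normality, the inequality $1/C' > 1/C$, and the defining property $1 - \sum_{j \le d_0} p_j/C \le \delta^2/(2\mu_c)$. For $j \le d_0$, conditional on no back-edges, $v$ has exactly $j - 1$ children in $T$, so $\text{type}(v) = L$ forces at least $j - k + 1$ of them to have type $\ge L$; the inductive hypothesis and a union bound yield $\Pr[\text{type}(v) = L \mid d(v) = j] \le \binom{j-1}{k-2}(q^{*})^{j-k+1} + o(1)$. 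Substituting $p_j = \Psi_{j-1}(\mu_c)/\Psi_{\ge k-1}(\mu_c)$, changing variables to $m = j - k$, applying $\sum_{m \ge 0}(\mu_c q^{*})^m/(m+1)! = (e^{\delta}-1)/\delta$ (using $\mu_c q^{*} = \delta$), and invoking the Poisson recursion $\mu_c \Psi_{k-2}(\mu_c) = (k-1)\Psi_{k-1}(\mu_c)$ collapses the infinite tail to
\[
\sum_{j \ge k} p_j \binom{j-1}{k-2}(q^{*})^{j-k+1} = (k-1)p_k \cdot \frac{e^{\delta}-1}{\mu_c}.
\]
By Lemma~\ref{lem:technical} one has $(k-1)p_k < 1 - 2\delta$, and an elementary calculation gives $C'(1 - 2\delta) = 1 - 2\delta + \delta^2/2$; collecting everything,
\[
\Pr[\text{type}(v) = L \mid \mathcal{H}] \le \frac{1}{\mu_c}\left[\frac{\delta^2}{2} + \left(1 - 2\delta + \frac{\delta^2}{2}\right)(e^{\delta}-1)\right] + o(1).
\]

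The induction closes via the elementary inequality $g(\delta) := \delta - \delta^2/2 - (1 - 2\delta + \delta^2/2)(e^{\delta}-1) \ge 0$ on $(0, 1/2]$; indeed, $g(0) = g'(0) = 0$ and $g''(\delta) = e^{\delta}(2 - \delta^2/2) > 0$ on this range, so convexity forces $g > 0$ throughout, yielding $\Pr[\text{type}(v) = L \mid \mathcal{H}] \le q^{*}$. The main obstacle is the bookkeeping alluded to above: showing that $C'$-normality is preserved up to $(1+o(1))$ through a recursion of depth $2N$ with polylog total exposure (absorbed by the slack $C' < C$), and that the back-edge contribution to the probability of type $\ge L$ at each child is uniformly $o(1)$, so that the inductive step compounds safely across all $2N$ levels without eroding the constant factor $q^{*}$.
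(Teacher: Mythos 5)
Your proposal has a genuine gap in the case analysis. You claim that, for $d(v) = j \le d_0$ with no back-edges, $\type(v) = L$ forces at least $j-k+1$ of $v$'s $j-1$ children to have type $\ge L$. This is false. Looking at the type rule, $\type(v) = L$ can also arise via the middle case: if exactly $k-1$ children have type less than $L$ (so exactly $j-k$ children have type $\ge L$) and the maximum type among those $k-1$ children is exactly $L-1$, then $\type(v) = 1 + (L-1) = L$. The paper's own proof explicitly covers this: ``either exactly $j-k$ of $v$'s children are of type $L$ and there is at least one child of type $L-1$, or there are at least $j-k+1$ children of $v$ of type $L$,'' which is why the bound $f_h(j)$ contains the factor $(P_{L,h-1} + P_{L-1,h-1})$ rather than just $P_{L,h-1}$.

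This is not a cosmetic omission. Controlling the extra case requires a bound on the probability that a child has type $L-1$, which your single-quantity induction (tracking only $\Pr[\type = L]$) does not provide. The paper therefore runs a \emph{double} induction, maintaining both $(a)$ $P_{L,h} \le \delta/\mu_c$ and $(b)$ $P_{i,h} \le (1-\delta)^i$ for $0 \le i < L$. The bound $(b)$ applied at $i = L-1$ gives $P_{L-1,h-1} \le (1-\delta)^{L-1}$, and the constant $L = \lceil \log_{1-\delta}(\delta^2/(2\mu_c)) \rceil$ is chosen precisely so that this quantity is $\le \delta^2/(2\mu_c(1-\delta))$, small enough for the induction to close. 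Tellingly, your argument never uses the value of $L$ at all, which it would have to if the missing case were being accounted for. As it stands, if you tried to patch your decomposition by adding the term $\binom{j-1}{j-k}(q^*)^{j-k}(k-1)\Pr[\type = L-1]$ and bounded $\Pr[\type = L-1]$ trivially by $1$, you would get a contribution $\ge k-1$ already at $j=k$, rendering the bound vacuous; the auxiliary bound $(b)$ is genuinely necessary. (A secondary issue: the degree-biased probabilities during the exploration are bounded by $Cp_j$, not $C'p_j$; the slack $C > C'$ is needed to absorb the $(1+o(1))$ perturbation, so the factor in the final estimate should be $C(1-2\delta) = (1-\delta)^2$, though this by itself would not break the numerics.)
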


We first show  why    Claim~\ref{clm:pruning} implies Lemma~\ref{lem:good-tree}.
Assume $\dd$ is $C'$-normal and  consider any moment at which an unmatched half-edge incident with a vertex at  level $N-1$ in $T$  is about to be treated, i.e., the next encountered vertex will belong to level $N$ unless already exposed. At this point the types of all vertices so far encountered at level $N$ have been assigned.   We may therefore apply
Claim~\ref{clm:pruning}
to deduce that, conditional on the labels  of the previously encountered vertices in level $N$, the probability that the next one receives type  $L$  is at most $ \delta/\mu_c $. By coupling this process with a sequence of independent Bernoulli trials each with parameter $\delta/\mu_c$, we conclude that, for any $t>0$,  the probability that  at least $t$ vertices are encountered at level $N$ and all are given type  $L$ is at most $(\delta/\mu_c)^t$.

Since    $\dd$ is $C'$-normal,  the degrees are all at most $\log n$, and so the number of vertices reached in the exploration is at most $(\log n)^{2N}= \exp\big(O(\log^{3}\log n)\big)=o(n^{1/3})$.  Consequently, since there are $\Theta(n)$ half-edges altogether, each step of the exploration process chooses an exposed vertex with probability $o(n^{-2/3}\log n)$. Thus, the probability that at least one of the $o(n^{1/3}\log n)$ steps encounters a back-edge is $o(1)$. Since all vertices of  $\K$ have degree at least  $k$, it follows that WHP, $T$ has
at  least $ (k-1)^{N} = \omega(1)$ vertices at level $N$. From the previous paragraph, the probability that these are all assigned  type  $L$ is $o(1)$. As there are WHP no back-edges, this implies that WHP some vertex $v$ at level $N$ receives a  type less than $L$.  By Observation~\ref{obs:type},   this  event implies  that $T^*(v)$ is an $(N,L)$-tree.

One can easily check that the convergence in the above WHP statements is uniform over all $C'$-normal degree sequences $\dd$. Since WHP   $\K$ has a $C'$-normal degree sequence,  Lemma~\ref{lem:good-tree} follows, and it only remains to prove the claim.

\begin{proof}[Proof of Claim~\ref{clm:pruning}]
For the given degree sequence $\dd = (d_1,\dots,d_{\nn}) \in \D_{\nn,2\mm}$,
let $R(\dd)$ be the set of all sequences $\dd' = (d_1',\dots,d_{\nn}') \in \D_{\nn,2\mm'}$, such that $d_i \ge d_i'$ for all $i$, and $m-m' \le n^{1/3}\log n$. By the arguments above, at every step of the exploration, the degree sequence of the unmatched half-edges in $\K$ is some element of $R(\dd)$.

Now let $\dd' \in R(\dd)$, and for every $j \le d_0$, let $n_j$ and $n_j'$ denote the number of vertices of degree $j$ in $\dd$ and in $\dd'$, respectively. Since $n_j = \Theta(n)$, and since $d_i \neq d_i'$ for $O(n^{1/3}\log n)$ coordinates, we have $n_j' = (1+o(1))n_j$. Similarly, $m' = (1+o(1))m$, and thus $jn_j'/(2m') = (1+o(1))jn_j/(2m)$. In short, for every $\dd' \in R(\dd)$  and $j \le d_0$, the probability of choosing a half-edge belonging to a vertex of (remaining) degree $j$, asymptotically equals the probability of the same event for $\dd$. Since, in addition, the probability that a randomly selected half-edge belongs to an already exposed vertex is $o(1)$, and since $C > C'$, we can state the following.

\begin{obs}\label{obs:Cbound}
At every step of the exploration,  conditioning upon the exploration steps taken previously,  the probability that the next encountered vertex is unexposed and has degree~$j \le d_0$ is bounded between $p_j/C$ and $Cp_j$.
\end{obs}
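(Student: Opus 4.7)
The plan is to derive the two-sided bound by combining the $C'$-normality of $\dd$ with a careful control of how the remaining degree sequence drifts as the exploration proceeds, then subtracting off a negligible exposed-vertex contribution.

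First, I would quantify the length of the exploration. Since $N = \lceil \log^2 \log n \rceil$ and every vertex has degree at most $\log n$, the total number of vertices reachable within distance $2N$ in the DFS tree is at most $(\log n)^{2N} = \exp\!\bigl(O(\log^3 \log n)\bigr) = o(n^{1/3})$. Consequently, at most $o(n^{1/3} \log n)$ half-edges have been matched at any step, and the current remaining degree sequence $\dd'$ lies in $R(\dd)$, differing from $\dd$ in only $O(n^{1/3}\log n)$ coordinates. In particular, the number $m'$ of remaining edges satisfies $m' = m - O(n^{1/3}\log n) = (1+o(1))m$.

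Second, I would use $C'$-normality to translate this into closeness of degree-biased probabilities. Fix $j \le d_0$ and let $n_j, n_j'$ denote the number of vertices of (remaining) degree $j$ in $\dd, \dd'$. Since $\dd$ is $C'$-normal, $j n_j/(2m)$ lies between $p_j/C'$ and $C'p_j$, and since $m = \Theta(n)$, this forces $n_j = \Theta(n)$. Only $O(n^{1/3}\log n)$ coordinates of $\dd$ have changed to produce $\dd'$, so $n_j' = (1+o(1))n_j$. Combining with $m' = (1+o(1))m$, the unconditional degree-biased probability $j n_j'/(2m')$ of selecting a half-edge whose vertex has remaining degree $j$ is $(1+o(1))$ times $jn_j/(2m)$, hence lies between $(1-o(1))p_j/C'$ and $(1+o(1))C' p_j$.

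Third, I would subtract the (small) probability that the chosen half-edge lands at an already exposed vertex. The number of exposed vertices is $o(n^{1/3})$, each having at most $\log n$ half-edges, for a total of $o(n^{1/3}\log n)$ such half-edges; since the total number of unmatched half-edges is $2m' = \Theta(n)$, this probability is $o(1)$. Noting that an unexposed vertex still has remaining degree equal to its original degree, I conclude that the probability that the next encountered vertex is unexposed with degree $j$ lies between $(1-o(1))p_j/C' - o(1)$ and $(1+o(1))C'p_j$.

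Finally, since $j\le d_0$ is one of finitely many values, $p_j$ is a positive constant depending only on $k$ and $c$, and the definitions give the strict inequality $C > C' > 1$. Thus for $n$ large enough, $(1+o(1))C' \le C$ and $(1-o(1))/C' - o(1)/p_j \ge 1/C$, so both bounds $p_j/C \le \prob[\cdot] \le C p_j$ hold simultaneously for every $j \le d_0$, which is the assertion of the observation. The only genuine technical point is ensuring the slack $C - C'$ (and correspondingly $1/C' - 1/C$) absorbs both error sources — the deviation of $\dd'$ from $\dd$ and the exposed-vertex correction — which is precisely why $C'$ was defined as the midpoint $(C+1)/2$ rather than $C$ itself.
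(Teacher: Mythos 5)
Your proposal is correct and follows essentially the same route as the paper: bound the number of exploration steps by $(\log n)^{2N}=o(n^{1/3})$ using the maximum degree $\log n$, observe that the remaining degree sequence stays in $R(\dd)$ so that $n_j'=(1+o(1))n_j$ and $m'=(1+o(1))m$ for each $j\le d_0$, note the $o(1)$ probability of hitting an exposed vertex, and absorb all errors in the strict gap $C>C'$. The only cosmetic difference is that you subtract the exposed-vertex probability from the remaining-degree-$j$ event rather than counting half-edges at unexposed degree-$j$ vertices directly, which is an equally valid bookkeeping of the same estimate.
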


To complement this observation, consider any moment during the exploration, let $S$ denote the exploration sequence up to that point, and let $p_{>d_0} (S)$ denote the probability that the next encountered vertex  will have degree larger than $d_0$, conditional on $S$. Let $p_{>d_0}$ denote the maximum of $p_{>d_0} (S)$, taken over all possible (partial) exploration sequences $S$. Then by Observation~\ref{obs:Cbound} and the definition of $d_0$ (at the beginning of this subsection), we have ${p_{>d_0} \le \delta^2/(2\mu_{c})}$.
\medskip

Before proceeding with the proof of the claim we introduce more terminology.  We say a  vertex $v$ of $T$  has height  $h$  if it is in level $2N-h$,  i.e.\ $T(v)$ has height $h$.
The height of an unmatched half-edge belonging to an exposed vertex is the same as the height of that vertex. 

Similarly to the definition of $p_{>d_0}$, for every $1 \le h \le 2N $ and $0\le i\le L$, let $P_{i,h}$ denote the maximum, over all   possible exploration sequences $S$ up to  any  step  in which a half-edge  at height   $h$   is being matched, of the probability that  the next encountered vertex  will be assigned type $i$, conditional on $S$. Note that this next encountered
vertex is at height $h-1$ unless it was already exposed.

We prove the claim by showing that the following hold for every $1 \le h \le 2N $:
\begin{enumerate}
\item[$(a)$] $P_{L,h}\le\delta/\mu_{c}$;
\item[$(b)$] $P_{i,h}\le(1-\delta)^{i}$ for every $0\le i<L$.
\end{enumerate}
%

Recall that  if  a back-edge occurs before assigning the type  of a vertex $u$, it is given type $L+1$, and that otherwise, and if $u$ is also not a leaf, then $d_T(u) = d_\K(u)$ and we simply refer to the degree of $u$ with no specification.  

Observe that $(b)$ holds trivially for $i=0$ (for every $h$), so from now on we assume for simplicity $i>0$. Since the only positive type a leaf can be assigned is $L+1$, there is nothing to prove for $h = 1$.
We now prove $(a)$ and $(b)$ for $h > 1$ by induction on $h$, beginning with $h = 2$. 
\medskip

Assume that no back-edge has yet occurred when a vertex $u$ at height 1 is assigned its type. Then every child of $u$ is a leaf of type 0, and there are therefore exactly three options. If $d(u) > d_0$ then $\type(u) = L$; if $d(u) = k$, i.e., $u$ has $k-1$ children in $T$, then $\type(u) = 1$; otherwise, $\type(u) = 0$.
So when a half-edge at height 2 is being matched, the probability that the next encountered vertex will eventually have type 1 or $L$ is bounded from above by the probability that the vertex will have degree $k$ or larger than $d_0$, respectively.
We therefore immediately obtain $(a)$ since
\[
P_{L,2} \le p_{>d_0} \le \delta^2/(2\mu_{c}) \le \delta/\mu_{c}.
\]
As for $(b)$, we only have to show that $P_{1,2} \le 1-\delta $, and this is true since by the above argument, Observation~\ref{obs:Cbound} and Lemma~\ref{lem:technical} we have
\[
P_{1,2}
\le Cp_k
<C\frac{1-2\delta}{k-1}
=\frac{(1-\delta)^2}{k-1}
<1-\delta.
\]
Assume now that $(a)$ and $(b)$ hold for $1 < h - 1 < 2N$; we prove $(a)$ and $(b)$ for $h$.

\medskip
 Observe that  if a vertex $v$ at height~$h -1$   with degree~$j$ is assigned type
$0<i<L$ (implying in particular that $j \le d_0$ by definition of type), then no  back-edge has yet been encountered, exactly $j-k$ of $v$'s $j-1$ children must be assigned type $L$,
and the maximum type among its other $k-1$ children must be exactly $i-1$.

 Now consider any moment when a half-edge at height $h$ is matched, and let $w$ denote the next encountered vertex. By Observation~\ref{obs:Cbound}, the probability that $w$ will be an unexposed vertex with degree $j$ is at most $Cp_j$.
Since the exploration is DFS, the bounds $P_{i,h-1}$ (for any $0 \le i \le L$) can be applied to each of the children of $w$ consecutively  and conditionally. Thus at any given step in which a half-edge at height~$h$ is being matched,  the probability that  the next  encountered vertex
will have degree $j$ and  type $0<i<L$, conditional on the previous history of the process, can be bounded from above  by
\[
f_{i,h}(j) := Cp_{j}\binom{j-1}{j-k}(P_{L,h-1})^{j-k}(k-1)P_{i-1,h-1}.
\]

In the same manner, consider the event that a  vertex $v$ at height $h-1$  with degree $j \le d_0$  is assigned
type $L$. (Recall that a vertex will also be assigned type $L$ if it has degree larger than $d_0$, which happens with probability at most $p_{> d_0}$.) Then no back-edge has yet occurred, and either exactly $j-k$ of $v$'s children are of
type $L$  and there is at least one child of type $L-1$, or there
are at least $j-k+1$ children of $v$ of type $L$. So at any moment in which a half-edge at height~$h$ is being matched,  we can bound
from above the probability that the next  encountered   vertex   will have degree $j \le d_0$ and  be assigned type $L$, conditional on the previous history of the process, by
\[
f_{h}(j) := Cp_{j}\binom{j-1}{j-k}(P_{L,h-1})^{j-k}(k-1)(P_{L,h-1}+P_{L-1,h-1}).
\]
%
\medskip
Next, we observe that for every $0<i<L$ and for every $k \le j < d_0$ the following holds:
\begin{equation}\label{eq:quotient}
\frac{f_{i,h}(j+1)}{f_{i,h}(j)}
=\frac{f_{h}(j+1)}{f_{h}(j)}
=\frac{p_{j+1}}{p_{j}}\cdot \frac{\binom{j}{j-k+1}}{\binom{j-1}{j-k}}\cdot P_{L,h-1}
=\frac{\mu_{c}}{j}\cdot\frac{j}{j-k+1}\cdot P_{L,h-1}
\le\delta,
\end{equation}
where the last inequality follows from the induction hypothesis.
We now use Lemma~\ref{lem:technical} and the induction hypothesis to get
\[
f_{i,h}(k)
=Cp_{k}(k-1)P_{i-1,h-1}
< C(1-2\delta)\left(1-\delta\right)^{i-1}
=\left(1-\delta\right)^{i+1},
\]
which together with~(\ref{eq:quotient}) yields
\[
P_{i,h}
\le \sum_{j = k}^ {d_0} f_{i,h}(j)
\le \sum_{j = k}^ {d_0} f_{i,h}(k)\delta^{j-k}
\ <\ \frac{f_{i,h}(k)}{1-\delta}<\left(1-\delta\right)^i,
\]
establishing $(b)$.
To prove $\left(a\right)$, we similarly use
\[
f_{h}(k)=Cp_{k}(k-1)(P_{L,h-1}+P_{L-1,h-1})<\left(1-\delta\right)^2 \left(\frac{\delta}{\mu_{c}}+\left(1-\delta\right)^{L-1}\right)
\]
and~(\ref{eq:quotient}) to obtain

\begin{eqnarray*}
\sum_{j = k}^ {d_0}f_{h}(j)
 & \le & \sum_{j = k}^ {d_0}f_{h}(k)\delta^{j-k}\ <\ \frac{f_{h}(k)}{1-\delta}\\
 & < & \left(1-\delta\right)\frac{\delta}{\mu_{c}}+\left(1-\delta\right)^{L}
 =\frac{\delta}{\mu_{c}}-\left(\frac{\delta^{2}}{\mu_{c}}-\left(1-\delta\right)^{L}\right)\le\frac{\delta}{\mu_{c}} - \frac{\delta^2}{2\mu_{c}},
\end{eqnarray*}
where the last inequality holds by the choice of $L$. We conclude that (a) holds by using the bound on $p_{> d_0}$ and the fact that
\[
P_{L,h} \le p_{> d_0} + \sum_{j = k}^ {d_0}f_{h}(j).\qedhere
\]
\end{proof}

\subsection{\label{sec:no-bad-two-cores}Proof of Lemma~\ref{lem:no-bad-two-cores}:
high excess, small boundary 2-cores are WHP linear}

 Throughout this subsection we   denote by $\nn$ and $\mm$ the number of vertices and edges, respectively, in~$\K$, the $k$-core of $G$, and denote by $\hd = 2\mm/\nn$ the average degree in~$\K$. For a subset $U \subset V(\K)$ we write $t=\left|V\left(U\right)\right|$, $s=\left|E\left(U\right)\right|$ and $r=\xs\left(U\right)=s-t$. We refer to any graph of minimum degree at least 2 as a\emph{ 2-core}. A 2-core $C\subset\mathcal{K}$ is \emph{bad} if $C$ has large excess
$\xs\left(C\right)\ge\log^{4}n$, small boundary $\left|\partial C\right|\le\left(k-2\right)V\left(C\right)$,
and small size $\left|V\left(C\right)\right|<\eps n$. Lemma~\ref{lem:no-bad-two-cores}
claims that WHP $\mathcal{K}$ has no bad 2-cores for some constant
$\eps=\eps\left(k,c\right)>0$.
We will consider  bad 2-cores in $\K$ with exactly $t$ vertices and $s$ edges for pairs $(t,s) \in \mathcal I$, where
\[
\mathcal{I}=\left\{\left(t,s\right)\mid t<\eps n \text{~~and~~} t+\log^{4}n\le s\le\binom{t}{2}\right\}.
\]
Observe that in particular $\log^{4}n < \binom{t}{2}<t^{2}$, and so from now on we assume $t>\log^{2}n$.

\medskip
Recall the constant $\delta$ from Lemma~\ref{lem:technical}, and let $\delta_{1}<1/e$ be  constant sufficiently small that $(1-\delta/2)\delta_{1}^{-4\delta_{1}}<1-\delta/4$. Such a $\delta_{1}$ exists since $x^{-x}\rightarrow1^{+}$ as $x\rightarrow0^{+}$.
In the proof we separate potential bad 2-cores into two classes: dense (i.e.,
with excess $r\ge\delta_{1}t$) and sparse (i.e., with excess $r\le\delta_{1}t$),
and partition $\mathcal{I}$ into $\mathcal{I}=\mathcal{I}_{\dns}\cup\mathcal{I}_{\sprs}$
accordingly.

We are now finally ready to define $\eps$. Since the discussion is
restricted to the $k$-core $\mathcal{K}$, which is WHP of linear
size $\hat{n} = (1+o(1))\Psi_{\ge k}(\mu_{c})n$, it will be convenient to define a constant
\[
\eps_{1}=\min\left\{(e^2c)^{-1-1/\delta_{1}},\frac{\delta}{2-\delta},\frac{1}{1+2e^{4}k^{6}}\right\}
\]
and set $\eps=\eps_{1}\Psi_{\ge k}\left(\mu_{c}\right)/2$.

\subsubsection{Dense 2-cores: $r\ge\delta_{1}t$}
We show that WHP, not only $\K$ does not contain dense bad 2-cores, but $G$ does not contain \emph{any} dense subgraphs of relevant size, without further restrictions. Clearly, it therefore suffices to only consider the case $r = \delta_1t$. So, for $\log^2n < t < \eps n$, let $N_t$ denote the expected number of subgraphs of $G$ with $t$ vertices and $(1+\delta_1)t$ edges. Then

\begin{align*}
N_t & \le \binom {n}{t} \binom {\binom t2}{(1+\delta_1)t} \left(\frac{c}{n}\right)^{(1+\delta_1)t} \\
& \le \left(\frac{en}{t}\right)^{t}\left(\frac{et^2/2}{(1+\delta_1)t}\right)^{(1+\delta_1)t} \left(\frac{c}{n}\right)^{(1+\delta_1)t} \\
& \le \left(\frac tn \right)^{\delta_1t} \left(\frac{e^2c}{2}\right)^{(1+\delta_1)t} \\
& \le \left(\eps\left(e^2c\right)^{1+1/\delta_1}\right)^{\delta_1t}2^{-t} \\
& \le 2^{-t},
\end{align*}
where the last inequality is due to the fact that $\eps<\eps_{1}$
and the definition of $\eps_{1}$. Summing over $t$, we get
$$\sum_{t=\log^2n}^{\eps n} N_t \le \sum_{t=\log^2n}^{\eps n} 2^{-t} \le n2^{-\log^2n} = o(1),$$
implying that WHP $G$ does not contain any subgraph with $t$ vertices and $s$ edges for any $(t,s)\in\cI_{\dns}$.

\subsubsection{Sparse 2-cores: $r\le\delta_{1}t$}

By using Corollary~\ref{l:proper_core}, we  will be able to restrict to $k$-cores with proper degree sequences. Of course, ``the degree sequence is proper'' is not an event but an asymptotic statement. Strictly, what Corollary~\ref{l:proper_core} means is that there is a concrete specification of the asymptotic bounds in the definition of ``proper'' such that such bounds hold WHP for the degree sequence of the $k$-core. When we refer to the event that the sequence is proper below, we mean the event that a set of such bounds hold.

In view of Theorem~\ref{thm:maincor}, we  consider a sequence $\dd$ of $\hat n$ independent copies of $Z_k(\lambda)$ where $\lambda$ is determined by~\eqref{lambdadef}. By Lemma~\ref{CWnm} we only need to  consider $\nn = \left(1+o\left(1\right)\right)\Psi_{\ge k}\left(\mu_{c}\right)n$ and $\mm = \left(1+o\left(1\right)\right)\mu_{c}\Psi_{\ge k-1}\left(\mu_{c}\right)n/2$, and consequently, the estimation of $\hd$ given in~\eqref{eq:average-degree-in-k-core} holds.   Since $\E \left[Z_k(\lambda)\right] = \lambda \Psi_{\ge k-1}(\lambda) / \Psi_{\ge k}(\lambda)$, it follows by the definition of $\lambda$ and Lemma~\ref{CWnm}  that   ${\lambda = (1+o(1))\mu_c}$.

Let $A_n$ denote the event that $(i)$  $\dd$  is a proper sequence,  and $(ii)$    a random $k$-core $\K$ with degree sequence $\dd$ has probability at least $1/n$ of containing a   2-core with parameters $(t,s)\in \mathcal{I}_{\sprs}$. Implicitly, this event is contained in the event that the sum of components of $\dd$ is even. Note that the restriction $t < \eps n$ and the definition of $\eps$ imply that $t < \eps_1\nn$.

\medskip
In this subsection we make use of two types of degree sequences; proper degree sequences $\dd \in \D_{\nn,2\mm}$ for $\K$, and degree sequences $\dd \in \mathbb{N}^{t}$ for subsets of $V(\K)$ of size $t$. In order to distinguish between these two types we write either $\ddn$ or $\ddt$, respectively. When referring to a subset $U\subset V(\K)$ of size $t$, we use $u_{1},\dots,u_{t}$ to denote its vertices, even when this is not written explicitly.

Since estimating the expected number of sparse 2-cores involves some
tedious calculations, we make them in several steps. We begin with
a few bounds which are given without context at this moment and will
be useful later. First, for a given $\ddt\in \mathbb{N}^{t}$, by using simple combinatorial
identities and by letting $r=s-t$ and $h_{i}'=h_{i}-2$ for every $i$, we have

\begin{eqnarray}
\sum_{\substack{h_{1},\ldots,h_{t}\ge2\\
\sum h_{i}=2s
}
}\hspace{4pt}\prod_{i=1}^{t}\binom{d_{i}}{h_{i}} & = & \sum_{\substack{h_{1},\ldots,h_{t}\ge2\\
\sum h_{i}=2s
}
}\hspace{4pt}\prod_{i=1}^{t}\binom{d_{i}-2}{h_{i}-2}\frac{[d_{i}]_{2}}{[h_{i}]_{2}}\nonumber \\
 & \le & \sum_{\substack{h_{1}',\ldots,h_{t}'\ge0\\
\sum h_{i}'=2r
}
}\hspace{4pt}\prod_{i=1}^{t}\binom{d_{i}-2}{h_{i}'} [d_{i}]_{2}/2\nonumber \\
 & = & 2^{-t}\binom{\sum_{i=1}^{t}(d_{i}-2)}{2r}\prod_{i=1}^{t}[d_{i}]_{2}~.\label{eq:shortCalc}
\end{eqnarray}

Second, for $j\le2r$ let $\widetilde\D_{j}=\left\{ \ddt \in \D_{t,kt+j} \mid \forall i\,d_{i}\ge k\right\}$, and note that $\left|\widetilde\D_{j}\right|=\binom{j+t-1}{t-1}$ and that
\begin{equation}\label{eq:sum-Dj}
\sum_{j=0}^{2r}\binom{j+t-1}{t-1} = \binom{2r+t}{t} \le \binom{2t}{2r}\le \left(\frac{et}{r}\right)^{2r}.
\end{equation}
Furthermore, at least $t-j$ entries in every $\ddt\in \widetilde\D_{j}$ equal~$k$,
and $\prod_{i=1}^{t}[d_{i}]_{2}$ is maximised when the entries in
$\ddt\in \widetilde\D_{j}$ are as equal as possible; that is, when~$j$ of them
equal~$k+1$. Recall that the distribution of a single component of $\dd$ is $Z_{k}(\lambda)$, abbreviated to $Z$ for the
remainder of this subsection. Since     $\lambda = (1+o(1))\mu_c$ we may estimate the probabilities in the distribution of $Z$  asymptotically by using $Z_{k}(\mu_c)$. By Lemma~\ref{lem:technical}, for $n$ sufficiently large we have
\begin{eqnarray}
\left[k\right]_{2}\Pr\left[Z=k\right] & \stackrel{\eqref{eq:average-degree-in-k-core}}{=} & \left(1+o\left(1\right)\right)\left[k\right]_{2}\frac{\hat{d}}{k}\Pr\left[Z_{k-1}\left(\mu_{c}\right)=k-1\right]\nonumber\\
& \le & \left(1+o\left(1\right)\right)\left(1-2\delta\right)\hat{d}<\left(1-\delta\right)\hat{d}.\label{eq:fact3}
\end{eqnarray}
Hence,
\begin{eqnarray}
\sum_{\substack{d_{1},\ldots,d_{t}\ge k\\
\sum d_{i}\le kt+2r
}
}\prod_{i=1}^{t}[d_{i}]_{2}\Pr\left[Z=d_{i}\right] & = & \sum_{j=0}^{2r}\hspace{4pt}\sum_{\ddt\in \widetilde\D_{j}}\hspace{4pt}\prod_{i=1}^{t}[d_{i}]_{2}\Pr\left[Z=d_{i}\right]\nonumber \\
 & \le & \sum_{j=0}^{2r}\left|\widetilde\D_{j}\right|\left([k+1]_{2}\right)^{j}\left([k]_{2}\right)^{t-j}\Pr\left[Z=k\right]^{t-j}\nonumber \\
 & \stackrel{\eqref{eq:fact3}}{\le} & \left([k+1]_{2}\right)^{2r}\sum_{j=0}^{2r}\left|\widetilde\D_{j}\right|\left((1-\delta)\hd\right)^{t-j}\nonumber \\
 & \le & \left((1-\delta)\hd\right)^{t}\left(2k^{2}\right)^{2r}\sum_{j=0}^{2r}\binom{j+t-1}{t-1}\nonumber \\
 & \stackrel{\eqref{eq:sum-Dj}}{\le} & \left((1-\delta)\hd\right)^{t}\left(\frac{2ek^{2}t}{r}\right)^{2r}.\label{eq:longCalc}
\end{eqnarray}

We now wish to bound the probability that a given set of vertices induces a sparse bad 2-core in a random $k$-core $\K$ with a given degree sequence, and this is where the configuration model becomes useful. For a given degree sequence $\ddn \in \D_{\nn,2\mm}$ and for positive integers $t < \nn$ and $s<\mm$, let us count how many configurations $F$ for $\ddn$ yield a multigraph $H = H(F)$ such that $H[U]$ is a 2-core with $s$ edges, where $U = \{u_{1},\dots,u_{t}\}$ is the set of the first $t$ vertices in the sequence. First we have to choose a degree sequence for $H[U]$, that is, choose $\hh = \{h_{1},\dots,h_{t}\} \in \D_{t,2s}$ such that $d_{H[U]}(u_i) = h_i \ge 2$ for every $i$. Given $\hh$, there are $\prod_{i=1}^{t}\binom{d_{i}}{h_{i}}$ possibilities to determine for each $u_{i}$ which $h_{i}$ of its $d_{i}$ half-edges go inside $U$ (while the rest go outside). Finally, there are $(2s-1)!!$ configurations for $H[U]$, and $(2\mm-2s-1)!!$ configurations for the rest of $H$. It follows that if $F$ is chosen uniform
 ly at random from all $(2\mm-1)!!$ possible configurations for~$\ddn$ then the probability of $H[U]$ being a 2-core with $s$ edges is at most
\begin{equation}\label{eq:prob-config-dense}
\sum_{\substack{h_{1},\ldots,h_{t}\ge2\\\sum h_{i}=2s}}\hspace{4pt}\prod_{i=1}^{t}\binom{d_{i}}{h_{i}} \frac{(2s-1)!!(2\mm-2s-1)!!}{(2\mm-1)!!}
\stackrel{\eqref{eq:shortCalc}}{\le} \frac{2^{s}s!}{\left(2\mm-2s\right)^{s}} \cdot2^{-t}\binom{\sum_{i=1}^{t}(d_{i}-2)}{2r}\prod_{i=1}^{t}[d_{i}]_{2}~.
\end{equation}
Note that this bound does not depend on $d_{t+1},\dots,d_{\nn}$.

Now, for integers $t$ and $s$, an arbitrary subset $U\subset V(\K)$ of size $t$,
and a degree sequence $\ddt\in\mathbb{N}^{t}$, denote by $P(U,\ddt,s)$
the probability that $\K[U]$ is a bad 2-core with $s$ edges,
where $\K$ is as in the definition of the event $A_n$ but conditioned upon satisfying $d_\K(u_i) = d_i$ for every $1 \le i \le t$. By the condition on the boundary
of bad 2-cores, we only need to consider sequences $\ddt$ such that $\sum_{i=1}^{t}d_{i}\le(k-2)t+2s=kt+2r$,
and in particular $\sum_{i=1}^{t}(d_{i}-2)\le kt$, since $r<t$.
Recall that $2\mm=\hd\nn$ and note that for sparse 2-cores we have
\begin{equation}
2s=2(t+r)\le3t\le kt\le\hd\eps_{1}\nn.\label{eq:two-ess}
\end{equation}
Using~\eqref{eq:prob-config-dense}, and Corollary~\ref{cor:prob-proper} with $2^r$ standing for $f(n)$, we can bound $P(U,\ddt,s)$ from above by
\begin{eqnarray}
P(U,\ddt,s) & \le & 2^r\frac{2^{s}s!}{\left(2\mm-2s\right)^{s}}\cdot2^{-t}\binom{\sum_{i=1}^{t}(d_{i}-2)}{2r}\prod_{i=1}^{t}[d_{i}]_{2}\nonumber \\
 & \stackrel{\eqref{eq:two-ess}}{\le} & \frac{2^{2r}s!}{\left(\hd\nn-\hd\eps_{1}\nn\right)^{s}}\binom{kt}{2r}\prod_{i=1}^{t}[d_{i}]_{2}\nonumber \\
 & \le & \frac{2^{2r}s!}{\left((1-\eps_{1})\hd\nn\right)^{s}}\left(\frac{ekt}{2r}\right)^{2r}\prod_{i=1}^{t}[d_{i}]_{2}\nonumber \\
 & = & \frac{s!}{\left((1-\eps_{1})\hd\nn\right)^{s}}\left(\frac{ekt}{r}\right)^{2r}\prod_{i=1}^{t}[d_{i}]_{2}~.\label{eq:prob_admits}
\end{eqnarray}
For the random sequence $\dd$ of independent truncated Poissons under consideration, define $A_n(t,s)$ to be the expected number of bad 2-cores, that have $t$ vertices and $s$ edges, in the random $k$-core $\K$ (in the case the sequence is proper and has even sum --- otherwise treat the number as 0).  It then follows from above that for any given pair $(t,s)\in\cI_{\sprs}$ we have

\begin{eqnarray*}
A_n(t,s) & \le & \sum_{\substack{U\subset V(\K) \\ |U| = t}}\hspace{4pt}\sum_{\substack{d_{1},\ldots,d_{t}\ge k\\
\sum d_{i}\le kt+2r
}
}\Pr\left[\deg_{\K}(u_{i})=d_{i}\text{ for }i=1,2,\ldots,t\right]P(U,\ddt,s)\\
 & \stackrel{(\ref{eq:prob_admits})}\le & \binom{\nn}{t}\sum_{\substack{d_{1},\ldots,d_{t}\ge k\\
\sum d_{i}\le kt+2r
}
}\left(\prod_{i=1}^{t}\Pr\left[Z=d_{i}\right]\right)\cdot\frac{s!}{\left((1-\eps_{1})\hd\nn\right)^{s}}\left(\frac{ekt}{r}\right)^{2r}\prod_{i=1}^{t}[d_{i}]_{2}\\
 & \le & \frac{\nn^{t}}{t!}\frac{s!}{\left((1-\eps_{1})\hd\nn\right)^{s}} \left(\frac{ekt}{r}\right)^{2r}\sum_{\substack{d_{1},\ldots,d_{t}\ge k\\
\sum d_{i}\le kt+2r
}
}\prod_{i=1}^{t}[d_{i}]_{2}\Pr\left[Z=d_{i}\right]\\
 & \stackrel{\eqref{eq:longCalc}}{{\le}} & \frac{[s]_{r}}{\left((1-\eps_{1})\hd\right)^{s}\nn^{r}} \left(\frac{ekt}{r}\right)^{2r}\left((1-\delta)\hd\right)^{t}\left(\frac{2ek^{2}t}{r}\right)^{2r}\\
 & \le &  \left(\frac{1-\delta}{1-\eps_{1}}\right)^{t}\left(\frac{t}{r}\right)^{4r}\left(\frac{4se^{4}k^{6}}{(1-\eps_{1})\hd\nn}\right)^{r}\\
 & \stackrel{\eqref{eq:two-ess}}{\le} &  \left(\frac{1-\delta}{1-\eps_{1}}\left(\frac{t}{r}\right)^{4r/t}\right)^{t}\left(\frac{\eps_{1}2e^{4}k^{6}}{1-\eps_{1}}\right)^{r}.
\end{eqnarray*}
By definition of $\eps_{1}$ we have $\frac{1-\delta}{1-\eps_{1}}\le1-\delta/2$
and $\eps_{1}2e^{4}k^{6}\le1-\eps_{1}$. Using the fact that $x\mapsto x^{-x}$
is increasing for $0<x<1/e$, and by definition of $\delta_{1}$,
for every $(t,s)\in\cI_{\sprs}$ we obtain

\[
A_n(t,s)\le \left(\frac{1-\delta}{1-\eps_{1}}\left(\frac{r}{t}\right)^{-4r/t}\right)^{t}
\le\left(\left(1-\frac{\delta}{2}\right)\delta_{1}^{-4\delta_{1}}\right)^{t}\le\left(1-\frac{\delta}{4}\right)^{t}.
\]
Recalling that $t>\log^{2}n$, and observing that $\left|\mathcal{I}\right|<n^{3}$, we get
\[
\sum_{(t,s)\in\cI_{\sprs}}A_n(t,s)
\le \sum_{(t,s)\in\cI_{\sprs}}\left(1-\delta/4\right)^{t}
\le \left|\cI\right|\left(1-\delta/4\right)^{\log^{2}n}=O(1/n^2).
\]
The probability of the event $A_n$ is hence $O(1/n)$. Thus,  by Theorem~\ref{thm:maincor}, if $\dd$ is distributed as  the degree
sequence of $\K(n,c,k)$, the probability that a random core with degree sequence $\dd$ has a sparse 2-core and is proper  is $o(1)$. That is, WHP $\K(n,c,k)$ does not have both a proper degree sequence and a  sparse 2-core. Recalling that it has proper degree sequence WHP,   Lemma~\ref{lem:no-bad-two-cores} follows.

\end{document}